\long\def\drop#1{}
\def\star{\ensuremath{*}}
\newcommand{\gme}{\Gamma_\varepsilon}
\renewcommand{\o}{\Omega}
\newcommand{\h}{\hspace{1cm}}
\DeclareMathOperator{\Lip}{Lip}
\DeclareMathOperator{\spt}{supp}
\newcommand{\es}{\varepsilon}
\let\e\es
\newcommand{\vfi}{\varphi}
\newcommand{\g}{\gamma}
\renewcommand{\k}{\kappa}
\newcommand{\mue}{\mu_\es}
\newcommand{\ue}{u_\es}
\newcommand{\F}{\mathscr{F}}
\newcommand{\G}{\mathcal{G}}
\newcommand{\hf}{\mathscr{H}^1}
\renewcommand{\L}{\mathcal{L}}
\newcommand{\R}{\mathbb{R}}
\newcommand{\Z}{\mathbb{Z}}
\newcommand\N{\mathbb{N}}
\renewcommand{\S}{\mathscr{S}}
\newcommand{\tra}{\mbox{\textsl{\textbf{t}}}}	
\renewcommand{\mp}{\mbox{\textbf{m}}}			
\newcommand{\gp}{\hat{\gamma}_+}
\newcommand{\gm}{\hat{\gamma}_-}
\newcommand{\infn}{\infty}
\newcommand{\ds}{\displaystyle}
\newcommand{\ra}{\rightarrow}
\newcommand{\weakto}{\rightharpoonup}
\newcommand{\ras}{\stackrel{*}{\rightharpoonup}}
\renewcommand{\div}{\mathop{\mathrm{div}}}
\DeclareMathOperator\rank{rank}
\newcommand{\conv}{\! \ast \!}
\newcommand{\ext}{\! \times \!}
\def\pref#1{(\ref{#1})}
\def\qed {{
   \parfillskip=0pt        
   \widowpenalty=10000     
   \displaywidowpenalty=10000  
   \finalhyphendemerits=0  
  %
   \leavevmode             
   \unskip                 
   \nobreak                
   \hfil                   
   \penalty50              
   \hskip1em              
   \null                   
   \hfill                  
   $\square$
  %
   \par}}                  
\def\Xint#1{\mathchoice
   {\XXint\displaystyle\textstyle{#1}}%
   {\XXint\textstyle\scriptstyle{#1}}%
   {\XXint\scriptstyle\scriptscriptstyle{#1}}%
   {\XXint\scriptscriptstyle\scriptscriptstyle{#1}}%
   \!\int}
\def\XXint#1#2#3{{\setbox0=\hbox{$#1{#2#3}{\int}$}
     \vcenter{\hbox{$#2#3$}}\kern-.5\wd0}}
\def\dashint{\Xint-}
\DeclareMathOperator\supp{supp}
\newtheorem{theo}{Theorem}[section]
\newtheorem{lemma}[theo]{Lemma}
\newtheorem{cor}[theo]{Corollary}
\newtheorem{prop}[theo]{Proposition}
\newtheorem{definition}[theo]{Definition}
\newenvironment{defi}{\begin{definition} \upshape}{\qed\end{definition}}
\newenvironment{remark}%
  {\par\medbreak\refstepcounter{theo}%
    \noindent\textbf{Remark~\thetheo. }}%
  {\qed\par\medskip}
\renewcommand{\theequation}{\arabic{section}.\arabic{equation}}
\begin{document}

\title{Stripe patterns in a model for block copolymers}
\author{Mark A. Peletier and Marco Veneroni}
\date{\today}

\maketitle

\begin{abstract}
We consider a pattern-forming system in two space dimensions defined by an energy $\mathcal G_\e$. The functional $\mathcal G_\e$ models strong phase separation in AB diblock copolymer melts, and patterns are represented by $\{0,1\}$-valued functions; the values $0$ and $1$ correspond to the A and B phases. The parameter $\e$ is the ratio between the intrinsic, material length scale and the scale of the domain $\Omega$. We show that in the limit $\e\to0$ any sequence $u_\e$ of patterns with uniformly bounded energy $\mathcal G_\e(u_\e)$ becomes stripe-like: the pattern becomes locally one-dimensional and resembles a periodic stripe pattern of periodicity $O(\e)$. In the limit the stripes become uniform in width and increasingly straight. 

Our results are formulated as a convergence theorem, which states that the functional  $\mathcal G_\e$ Gamma-converges to a limit functional $\mathcal G_0$. This limit functional is defined on fields of rank-one projections, which represent the local direction of the stripe pattern. The functional $\mathcal G_0$ is only finite if the projection field solves a version of the Eikonal equation, and in that case it is the $L^2$-norm of the divergence of the projection field, or equivalently the $L^2$-norm of the curvature of the field. 

At the level of patterns the converging objects are the jump measures $|\nabla u_\e|$ combined with  the projection fields corresponding to the tangents to the jump set. The central inequality from Peletier \& R\"oger, \emph{Archive for Rational Mechanics and Analysis}, to appear, provides the initial estimate and leads to weak measure-function-pair convergence. We obtain strong convergence by exploiting the non-intersection property of the jump set.
\end{abstract}

\noindent\textbf{AMS Cl.} 49J45, 49Q20, 82D60.
\smallskip

\noindent\textbf{Keywords:} Pattern formation, $\Gamma$-convergence, Monge-Kantorovich distance, Eikonal equation, singular limit, measure-function pairs. 

\tableofcontents
\section{Introduction}

\subsection{Striped patterns}

Of all the patterns that nature and science present, striped patterns are in many ways the simplest. Amenable to a one-dimensional analysis,  they are often the first to be analysed and their characterization is the most complete. In many systems stationary stripe patterns are considered to be well understood, with the research effort focusing on either pattern evolution (such as in the Newell-Whitehead-Segel equation) or on defects. 

In this paper we return to a very basic question: can we prove rigorously that `stripes are best' in the appropriate parts of parameter space? The word `best' requires specification, and let us therefore restrict ourselves to stationary points in variational systems, and take `best' to mean `globally minimizing'. Can we prove that stripes are global minimizers? Within the class of one-dimensional structures---those represented by a function of one variable---optimality of one such structure has been shown in for instance the Swift-Hohenberg equation~\cite{MizelPeletierTroy98,Peletier01a,MarcusZaslavski99,MarcusZaslavski02,Peletier99} and in a block copolymer model~\cite{Mueller93,RenWei00,FifeHilhorst01,ChoksiRen05,ChenOshita06,VanGennipPeletier08}. However, when comparing a striped pattern with arbitrary multidimensional patterns we know of no rigorous results, for any system.

\medskip
The work of this paper provides a weak version of the statement `stripes are best' for a specific two-dimensional system that arises in the modelling of block copolymers. This system is defined by an energy $\mathcal G_\e$ that admits locally minimizing stripe patterns of width $O(\e)$. As $\e\to0$, we show that any sequence $u_\e$ of patterns for which $\mathcal G_\e(u_\e)$ is bounded becomes stripe-like. In addition, the stripes become increasingly straight and uniform in width.

\subsection{Diblock Copolymers}

An \emph{AB diblock copolymer} is constructed by grafting two polymers together (called the A and B parts). Repelling forces between the two parts lead to phase separation at a scale that is no larger than the length of a single polymer. In this micro-scale separation patterns emerge, and it is exactly this pattern-forming property that makes block copolymers technologically useful~\cite{RuzetteLeibler05}. 

By modifying the derivation in~\cite[Appendix A]{PeletierRoegerTA} we find the functional 
\begin{equation}
\label{eq:functional}
\F_\e(u) = \left\{ \begin{array}{ll} \displaystyle
\e\int_\Omega |\nabla u| + \frac1\e d(u,1-u),
  \qquad&\mbox{ if $u \in K$,}\vspace{0.25cm}\\
\infty & \mbox{ otherwise.} \end{array} \right.
\end{equation}
Here $\Omega$ is an open, connected, and bounded subset of $\R^2$ with $C^2$ boundary, and
\begin{equation}\label{defk} 
		K:=\left\{u\in BV(\Omega;\{0,1\})\, :\ \dashint_\Omega u(x)\, dx= \frac{1}2 
		\ \text{ and }\ u = 0  \text{ on }  \partial \Omega\,\right\}.
\end{equation}
The interpretation of the function $u$ and the functional $\F_\e$ are as follows.

The function $u$ is a characteristic function, whose support corresponds to the region of space occupied by the A part of the diblock copolymer; 
 the complement (the support of $1-u$) corresponds to the B part. The boundary condition $u=0$ in $K$ reflects a repelling force between the boundary of the experimental vessel and the A phase. Figure~\ref{fig:intro} shows two examples of admissible patterns. 
\begin{figure}[ht]
\centering
\psfig{figure=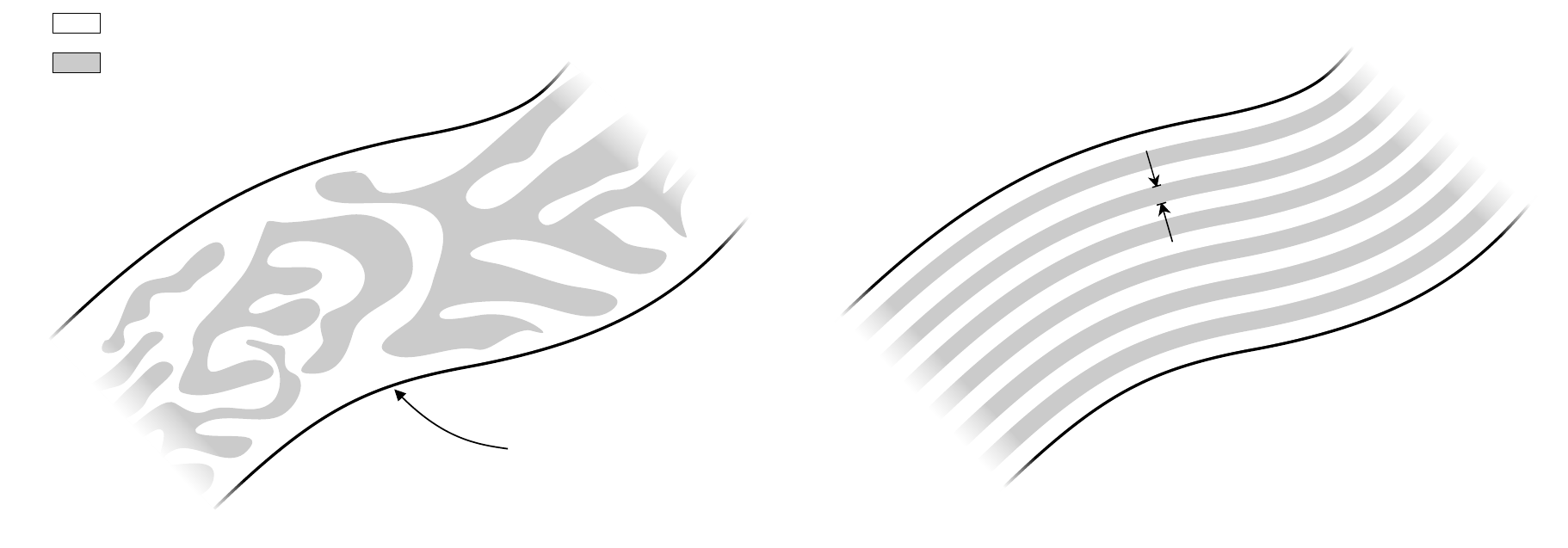,height=5cm}
\caption{A section of a domain $\Omega$ with a general admissible pattern (left) and a stripe-like pattern (right). We prove that in the limit $\e\to0$ all patterns with bounded energy $\mathcal G_\e$ resemble the right-hand picture.}
\label{fig:intro}
\end{figure}

The functional $\F_\e$ contains two terms. The first term penalizes the interface between the A and the B parts, and arises from the repelling force between the two parts; this term favours large-scale separation. In the second term the the Monge-Kantorovich distance~$d$ appears (see~\pref{eq:def-d-1} for a definition); this term is a measure of the spatial separation of the two sets $\{u=0\}$ and $\{u=1\}$, and favours rapid oscillation. The combination of the two leads to a preferred length scale, which is of order $\e$ in the scaling of~\pref{eq:functional}. 

\medskip
The competing long- and short-range penalization in the functional $\F_\e$ is present in many pattern-forming functionals, such as the Swift-Hohenberg and Extended Fisher-Kolmogorov functionals (see~\cite{PeletierTroy07} for an overview). A commonly used energy in the modelling of block copolymers was derived by Ohta and Kawasaki~\cite{OhtaKawasaki86} (see also~\cite{ChoksiRen03}); its sharp-interface limit shares the same interface term with $\F_\e$, and contains a strongly related distance penalization.


\subsection{Properties of $\F_\e$}

\drop{
We now leave the derivation of $\F_\e$ aside and turn to its properties. To start with, $\F_\e$ has a distinct preferred length scale. A simple way to recognize this is to take for $\Omega$ a square $[0,L]^2$ and let $u$ be a horizontally striped pattern of periodicity $L/n$,
\[
u(x,y) := 
\begin{cases}
0 & \text{if }\lfloor\frac{2ny}L+ \frac12\rfloor \text{ is even}\\
1 & \text{if }\lfloor\frac{2ny}L+\frac12\rfloor \text{ is odd}
\end{cases}
\]
Then it is not difficult to see that $d(u,1-u)= L^3/16n$ and $\int|\nabla u| = (2n-1)L$. Therefore
\[
\int |\nabla u| + d(u,1-u) = (2n-1)L + \frac{L^3}{16n},
\]
which has a minimum in $n$ at $n^2 = L^2/16$, with period $L/n = 4$. 

\medskip
}

Many of the properties of the functional $\F_\e$ can be understood from the following lower bound. (The description that follows is embellished, and cuts some corners; full details are given in Section~\ref{sec:lowerbound}). Take a sequence $u_\e$, and let us pretend that the interface $\partial \supp u_\e$ consists of a single closed curve $\gamma_\e:[0,L_\e]\to\Omega$, parametrized by arclength $s$. 

The metric $d$ induces a partition of the domain $\Omega$ into roughly-tubular neighbourhoods of $\gamma_\e$, and defines a parametrization of $\Omega$ of the form
\[
(s,m) \mapsto \gamma_\e(s) + t_\e(m;s)\theta_\e(s)
\qquad \text{for }0\leq s\leq L_\e \text{ and } -M_\e(s)< m<M_\e(s).
\]
Here $\theta_\e:[0,L_\e]\to S^1$ is the direction of the rays along which mass is shifted by an optimal transport (see Section \ref{sec:mtp} below), and $m\mapsto t_\e(m;s)$ is an increasing function (see Figure~\ref{fig:rays}). The function $M_\e:[0,L_\e]\to[0,\infty)$ is the area density between two rays, and can be interpreted as (approximately) the width of a tubular neighbourhood. Each such tubular neighbourhood then consists of `half' of a $u_\e$-stripe ($0<m<M_\e(s)$) and half of a $(1-u_\e)$-stripe ($-M_\e(s)<m<0$). 

\begin{figure}[ht]
\psfig{figure=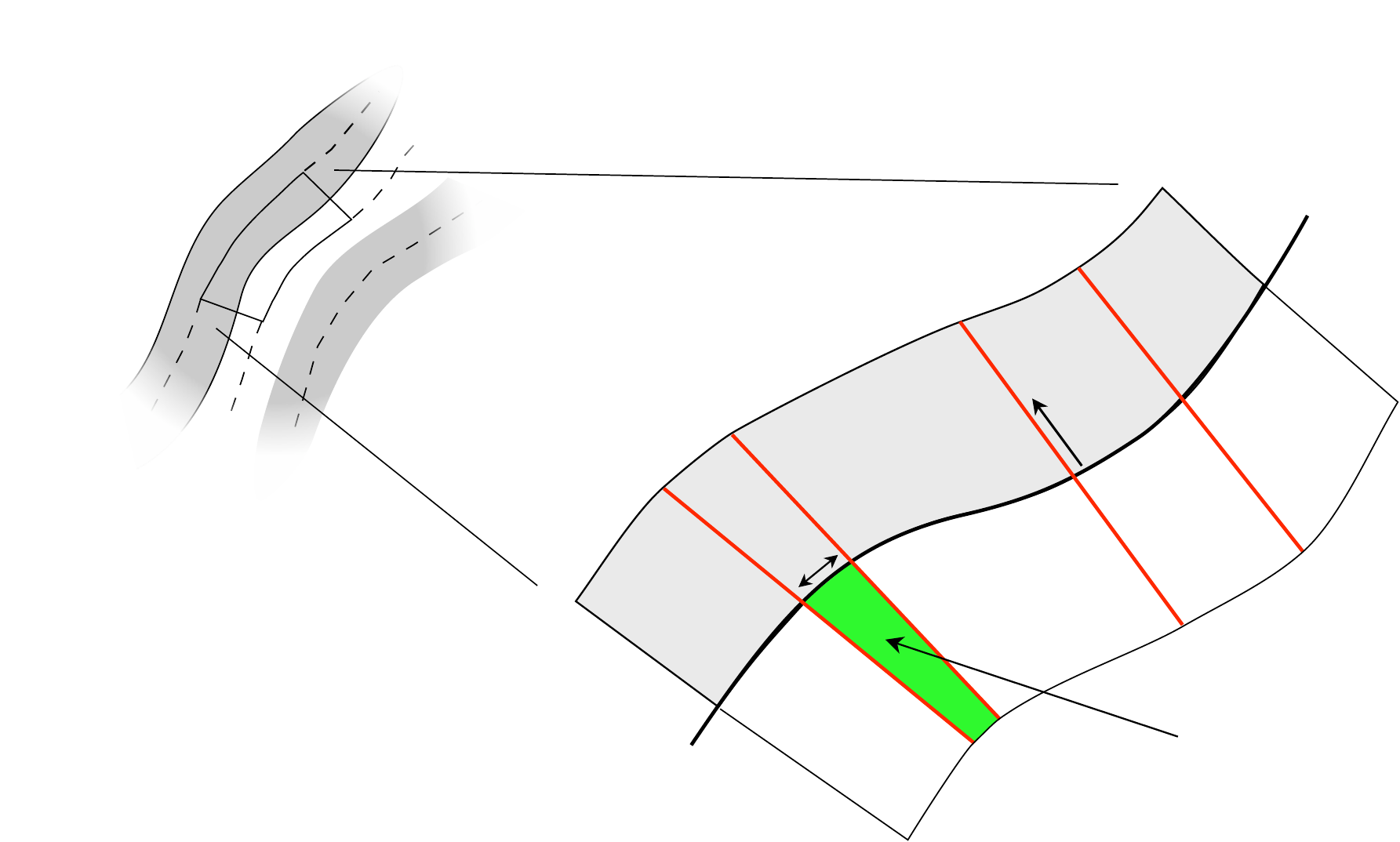,height=7cm}
\caption{The parametrization induced by the distance $d(u,1-u)$. }
\label{fig:rays}
\end{figure}

Using this parametrization we find for the functional $\F_\e$ the (simplified) estimate
\begin{equation}
\label{ineq:basic}
\F_\e(u) - |\Omega| \geq 
\int_0^{L_\e}\left[ \left( \frac{M_\e(s)}{\es} -1\right)^2 
+ \left( \frac 1{\sin \angle(\gamma_\e'(s),\theta_\e(s))}-1\right) 
+ \frac {\e^2}{4}	|\theta_\e'(s)|^2\right]\,\es\,ds.
\end{equation}
In this integral we have joined a factor $\e$ with the length element $ds$, so that the integral satisfies $\int_0^{L_\e} \e \, ds = \es \int_\Omega |\nabla u_\es|\sim 1$. 

In the inequality above, all three terms on the right-hand side are non-negative. If $\F_\e - |\Omega|$ vanishes as $\e\to0$,  then necessarily
\begin{itemize}
\item $M_\e/\e$ converges to $1$, implying that the tubular neighbourhoods become of uniform width $2M_\e\approx 2\e$;
\item $\gamma_\e'(s)$ and $\theta_\e(s)$ become orthogonal at each $s$, which means that $\theta_\e$ becomes a unit normal to $\gamma_\e$.
\end{itemize}
These two properties imply that the final term in~\pref{ineq:basic} is approximately equal to 
\[
\frac{\e^2}4 \int_0^{L_\e} |\gamma_\e''(s)|^2 \, \e ds.
\]

\medskip
With these arguments in mind we introduce a rescaled functional $\mathcal G_\e$ defined by
\begin{equation}
\label{def:rescaled_functional}
\mathcal G_\e(u) := \frac1{\e^2} \Bigl( \F_\e(u) - |\Omega|\Bigr).
\end{equation}
If for a sequence $u_\e$ the rescaled energies $\mathcal G_\e(u_\e)$ are bounded in $\e$, then from the discussion above we expect $u_\e$ to become stripe-like, with stripes that are of width approximately $2\e$; the limit value of the sequence $\mathcal G_\e(u_\e)$ will be related to the curvature of the limiting stripes.


\subsection{The limit problem}\label{subsec:p}

If, as we expect,  $u_\e$ is a sequence of patterns with an increasingly uniform stripe pattern, then the sequence $u_\e$ should converge weakly to its average on $\Omega$, that is $1/2$. This implies that  the sequence of functions $u_\e$ does not capture the directional information that we need in order to define a `straightness' or `curvature' of the limit structure.

The derivative $\nabla u_\e$ does carry information on the direction of the stripes, but it vanishes in the limit, as one can readily verify by partial integration. The interpretation of this vanishing is that interfaces that face each other carry opposite signs and therefore cancel each other. 

In order to counter this cancellation we switch from vectors to \emph{projections}. For the purposes of this paper, a projection will be a symmetric rank-one unit-norm 2-by-2 matrix, or equivalently a matrix $P$ that can be written as $P=e\otimes e$, where $e$ is a unit vector. For $u\in K$ the Radon-Nikodym derivative $d\nabla u/d|\nabla u|$ is a unit vector at $|\nabla u|$-a.e.~$x$, and this allows us to define 
\[
P(x) := \frac{\nabla u^\perp}{|\nabla u|}(x) \otimes \frac{\nabla u^\perp}{|\nabla u|} (x)
\qquad \text{for }|\nabla u|\text{-a.e.~$x$}.
\]
\begin{figure}[htbp]
  \begin{center}
    \psfrag{u0}[c][]{$u=0$}		
    \psfrag{u1}[c][]{$u=1$}
    \psfrag{p}[c][]{$P$}
    \psfrag{n}[c][]{$\ds \nabla u$}
    \psfrag{t}[c][]{$\ds \nabla u^\perp$}
		\includegraphics[scale=1.2]{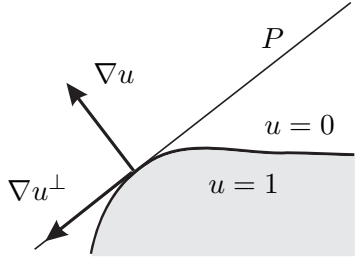}
		\caption{$P$ is the orthogonal projection onto the line normal to $\nabla u$. }
		\end{center}
\end{figure}%
Here and below we write simply $\nabla u/|\nabla u|$ instead of $d\nabla u/d|\nabla u|$, and we use the notation~$e^\perp$ for the rotation over 90 degrees anti-clockwise of the vector $e$. With 
this definition $P$ projects along the vector $\nabla u$ onto the line with direction $\nabla u^\perp$. 

The space $\mathcal P$ of projections is homeomorphic to $P^1$, the projective line, i.e. $S^1/\Z_2$ or $S^1$ with plus and minus identified with each other, something which can be directly recognized by remarking that in $P=e\otimes e$ one can replace $e$ by $-e$ without changing $P$. Since the direction of the stripes in Fig.~\ref{fig:intro} (right) is also only defined up to 180 degrees, this shows why projections are a more natural characterization of stripe directions than unit vectors. 

\medskip

In the limit $\e\to0$ the stripe boundaries become dense in $\Omega$, suggesting that the limit object is a projection $P(x)$ defined at every $x\in \Omega$. Let us assume, to fix ideas, that this $P$ arises from a smooth unit-length vector field $e$, such that 
$P(x) := e(x)\otimes e(x)$. We keep the interpretation of a stripe field in mind, in which $e(x)$ is the tangent direction of a stripe at $x$. The divergence\footnote{Recall that the divergence of a matrix with elements $a_{ij}$ is the vector $\sum_j \partial_j a_{ij}$.} of $P$ splits into two parts:
\[
\div P =  (\nabla e )\cdot e + e (\div e),
\]
The first of these is the derivative of $e$ in the direction of $e$, and therefore equal to the curvature of the stripe. It follows that this term is orthogonal to the stripe. The second term measures the divergence of the flow field $e$, and since $e$ is unit-length this term measures the relative divergence of nearby stripes. If the stripes are locally parallel, this term should vanish.

Summarizing, if $P$ is the limit projection field, then $\div P$ is expected to contain two terms,  one of which is parallel to the stripe and should vanish, and the other which is orthogonal to the stripe and captures curvature. This serves to motivate the following definition of the admissible set of limit projections~$P$:
\begin{defi}\label{limprob}
$\mathcal{K}_0(\Omega)$ is the set of all $P\in L^2(\Omega;\mathbb{R}^{2 \times 2})$ such that
\begin{eqnarray*}
         P^2 = P && \mbox{a.e. in } \Omega,\\
         \rank P=1 && \mbox{a.e. in } \o,\\
         P \mbox{ is symmetric} && \mbox{a.e. in } \o,\\
         \div P \in L^2(\R^2;\R^2) && (\mbox{extended to $0$ outside  }\o),\\
         P\, \div P = 0  && \mbox{a.e. in } \o.
\end{eqnarray*}
\end{defi}
The first three conditions encode the property that $P$ is a projection field. The fourth one is a combination of a regularity requirement in the interior of $\Omega$ and a boundary condition  on $\partial \Omega$ (see Remark~\ref{rem:divPL2}); 
we comment on boundary conditions below.  The regularity condition implies that $\div P$ is locally a function, which ensures that the fifth condition is meaningful. That last condition, which reduces to $\div e=0$ in the case discussed above, is exactly the condition of parallel stripes. 

The regularity condition also implies that various singularities in the line fields are excluded. We comment on this issue in the Discussion below.


\subsection{The Eikonal Equation}

As is to be expected from the parallel-stripe property, the set $\mathcal{K}_0(\o)$ can be seen as a \emph{set of solutions of the Eikonal equation}. The Eikonal equation arises in various different settings, and consequently has various formulations and interpretations. For our purposes the important features are listed below. With the stripe pattern in mind we identify at every point two orthogonal vectors, the \emph{tangent} (which would be $e$ above) and the \emph{normal}. Naturally this identification leaves room for the choice of sign, but since our application is stated in terms of projections rather than vectors this will pose no problem. 

Elements of $\mathcal K_0(\Omega)$ satisfy
\begin{itemize}
\item \emph{tangents propagate along normals:} along the straight line parallel to the normal in $x_0$, the tangents are constant and equal to the tangent in $x_0$
\item \emph{the boundary $\partial \Omega$ is tangent}: the stripes run parallel to the boundary $\partial \Omega$.
\end{itemize}

This leads to the following existence and uniqueness theorem, which we prove in a separate paper using results from~\cite{JabinOttoPerthame02}:
\begin{theo}[\cite{PeletierVeneroniTA}]
\label{th:ee}
Among domains $\Omega$ with $C^2$ boundary, $\mathcal K_0(\Omega)$ is non-empty if and only if $\Omega$ is a tubular domain. In that case $\mathcal K_0(\Omega)$ consists of a single element.
\end{theo}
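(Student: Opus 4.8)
The plan is to recognise $\mathcal K_0(\Omega)$ as the set of zero-energy states of the Eikonal equation on $\Omega$ and to invoke the rigidity theory of Jabin, Otto and Perthame~\cite{JabinOttoPerthame02}. The first step is a reformulation. Given $P\in\mathcal K_0(\Omega)$, the first three conditions let me write $P=e\otimes e$ for a measurable $e\colon\Omega\to S^1$ (defined locally up to a sign, which does not matter). Since formally $\div(e\otimes e)=(e\cdot\nabla)e+e\,\div e$ and $|e|\equiv1$, one gets $P\,\div P=e\,\div e$, so the fifth condition is $\div e=0$; equivalently the rotated field $m:=e^\perp$ is curl-free, hence locally $m=\nabla u$ with $|\nabla u|=1$. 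An integration by parts splits the fourth condition into the interior bound $\div P=(e\cdot\nabla)e\in L^2(\Omega)$ and the boundary condition $Pn=(e\cdot n)\,e=0$ on $\partial\Omega$, i.e.\ $e$ (equivalently the level sets of $u$) is tangent to $\partial\Omega$ (cf.\ Remark~\ref{rem:divPL2}). Because $m\otimes m=\mathrm{Id}-P$, the interior bound reads $\div(m\otimes m)\in L^2$, which is exactly the entropy-regularity hypothesis under which~\cite{JabinOttoPerthame02} applies; thus $m$ is a zero-energy state on the bounded $C^2$ domain $\Omega$, with tangential boundary values.

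For the sufficiency direction I would argue directly. If $\Omega$ is a tubular domain around a $C^2$ closed central curve $\Gamma$ with half-width $\delta<\mathrm{reach}(\Gamma)$, let $b$ be the signed distance to $\Gamma$; the tubular-neighbourhood theorem gives $b\in C^{1,1}(\overline\Omega)$, $|\nabla b|\equiv1$, and $\partial\Omega=\{b=\pm\delta\}$. Put $e:=\nabla^\perp b$ and $P:=e\otimes e$. Then $P$ is a symmetric rank-one projection, so conditions one to three hold; $\div e=\div(\nabla^\perp b)=0$ a.e., so condition five holds; $\div P=(e\cdot\nabla)e$ is the curvature vector of the parallel curves $\{b=t\}$, bounded in terms of $\|\kappa_\Gamma\|_\infty$ and $\delta$, hence in $L^\infty(\Omega)\subset L^2$; and $e$ is tangent to $\partial\Omega$, so there is no singular contribution to $\div(P\mathbf 1_\Omega)$, which gives condition four. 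Hence $P\in\mathcal K_0(\Omega)\neq\emptyset$, and this $P$ is the candidate for the unique element.

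For necessity and uniqueness I would take $P=e\otimes e\in\mathcal K_0(\Omega)$ with $m=e^\perp$ as above and apply the structure theorem of~\cite{JabinOttoPerthame02}: the characteristics of $m$ are straight segments on which $m$ is constant, $u$ is locally the distance to a closed set $S$, and $\Omega$ is foliated by the level sets of that distance; the tangency of $\partial\Omega$ fixes $\partial\Omega$ as a union of two such level sets. The $L^2$-bound on $\div(m\otimes m)$ prevents the level-set curvature from blowing up, which rules out an isolated point in $S$ (near such a point $m\sim(x-P_0)^\perp/|x-P_0|$ and $\div(m\otimes m)\sim|x-P_0|^{-1}\notin L^2$); together with connectedness of $\Omega$ and $C^2$-regularity of $\partial\Omega$ this forces $S$ to be a single $C^2$ closed curve $\Gamma$ and $\Omega$ to be exactly a tube of some half-width $\delta<\mathrm{reach}(\Gamma)$ around $\Gamma$. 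Then $m=\pm\nabla(\mathrm{dist}(\cdot,\Gamma))$ is determined up to global sign, so $P=e\otimes e$ is uniquely determined, which completes the argument.

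I expect the necessity/uniqueness step to be the main obstacle: one must verify that the fourth condition of Definition~\ref{limprob} genuinely places $m$ in the regularity class handled by~\cite{JabinOttoPerthame02}, and then upgrade the \emph{local} characteristic picture it provides to the \emph{global} statement that $\Omega$ is a tube — excluding focal points and controlling the topology of the foliation. The sufficiency direction, by contrast, is a short computation with the distance function to the central curve.
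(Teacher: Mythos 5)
First, note that the paper does not actually prove Theorem~\ref{th:ee} here: it is imported from \cite{PeletierVeneroniTA}, and the only indication of its proof is that it ``uses results from \cite{JabinOttoPerthame02}''. Your plan follows exactly that intended route, and your sufficiency argument is sound: for a tubular domain the field built from the distance function to the central curve is precisely the element the paper exhibits (cf.\ Lemma~\ref{levelsets}), and checking the five conditions of Definition~\ref{limprob} is the short computation you describe.

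The necessity/uniqueness half, however, has two concrete gaps beyond the one you flag. (i) \emph{Orientability.} A tubular domain is an annulus, and a measurable line field on a non-simply-connected domain need not lift to a global unit vector field; ``defined locally up to a sign, which does not matter'' suffices for pointwise algebraic identities but not for applying \cite{JabinOttoPerthame02}, whose objects are vector fields, nor for your global foliation argument. The actual proof first upgrades $P$ to $H^1(\Omega)$ and then invokes a two-dimensional orientability criterion (Ball--Zarnescu) to produce a genuine $e\in H^1(\Omega;S^1)$ with $P=e\otimes e$. (ii) \emph{Regularity versus the entropy hypothesis.} The condition $\div(m\otimes m)\in L^2$ is \emph{not} ``exactly the entropy-regularity hypothesis'' of \cite{JabinOttoPerthame02}: zero-energy states there are characterized by the vanishing of all entropy productions, and every identity you rely on ($\div(e\otimes e)=(e\cdot\nabla)e+e\,\div e$, $P\,\div P=e\,\div e$, $Pn=(e\cdot n)e$) is purely formal for a merely measurable $e$. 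Bridging from Definition~\ref{limprob} to the class where the kinetic/characteristics machinery applies is a genuine step --- essentially the $H^1$ regularity just mentioned --- and it is where most of the work in \cite{PeletierVeneroniTA} lies. Your remaining concern (upgrading the local characteristic picture to the global statement that $\Omega$ is a tube, with vortices excluded because $|\div P|\sim|x-x_0|^{-1}\notin L^2$ in two dimensions) is correctly identified and is indeed how that argument concludes.
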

A \emph{tubular domain} is a domain in $\R^2$ that can be written as
\[
\Omega = \Gamma + B(0,\delta),
\]
where $\Gamma$ is a closed curve in $\R^2$ with curvature $\kappa$ and $0<\delta< \|\kappa\|^{-1}_\infty$. In this case the \emph{width} of the domain is defined to be $2\delta$. The unique element $P\in \mathcal{K}_0(\Omega)$ in the theorem is given by
\[
P(x) = \tau(\pi x)\otimes \tau(\pi x),
\]
where $\pi:\Omega\to\Gamma$ is the orthogonal projection onto $\Gamma$ (which is well-defined by the assumption on $\delta$) and $\tau(x)$ is the unit tangent to $\Gamma$ at $x$.

The reason why Theorem~\ref{th:ee} is true can heuristically be recognized in a simple picture. Figure~\ref{fig:ee} shows two sections of $\partial \Omega$ with a normal line that connects them. By the first property above, the stripe tangents are orthogonal to this normal line; by the second, this normal line is orthogonal to the two boundary segments, implying that the two segments have the same tangent. Therefore the length of the connecting normal line is constant, and as it moves it sweeps out a full tubular neighbourhood. 
\begin{figure}[ht]
\indent\centering
\psfig{figure=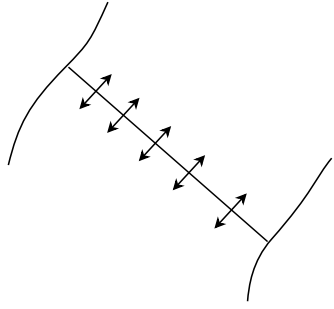,height=4cm}
\caption{If tangent directions propagate normal to themselves, and if in addition the boundary is a tangent direction, then the domain is tubular (Theorem~\ref{th:ee}).}
\label{fig:ee}
\end{figure}

In order to introduce the limit functional, define the space of bounded measure-function pairs on $\o$:
\begin{equation}
\label{def:X}
X:=\left\{ (\mu,P):\ \mu\in RM(\o),\ P \in L^{\infty}(\o,\mu;\R^{2 \times 2})\right\}.
\end{equation}
Here $RM(\Omega)$ is the space of Radon measures on $\Omega$. With the definition of $\mathcal{K}_0(\o)$ in hand we now define the limit functional $\G_0:X\ra \R,$
\begin{equation}
\label{def:G0}
\G_0(\mu,P):=\left\{\begin{array}{cl}
	\ds \frac 14 \int_\o |\div P(x)|^2d\mu(x) & \mbox{if }\mu=\frac12 \L^2\llcorner \o \mbox{ and }P\in \mathcal{K}_0(\o)\\
	+\infty & \mbox{otherwise}
\end{array} \right.
\end{equation}
Here $\L^2$ is two-dimensional Lebesgue measure. For the case of $\mu=\frac12 \L^2\llcorner \o$, $P=e\otimes e$, we have $\mathcal G_0(\mu,P) = 1/8 \int |(\nabla e)\cdot e|^2$: the functional $\mathcal G_0$ measures the curvature of stripes. 


\subsection{The main result}

The main result of this paper states that $\G_\e$ converges in the Gamma-convergence sense to the functional $\G_0$. We first give the exact statement. 

\begin{theo}\label{thetheorem}
Let $\Omega$ be an open,  connected subset of $\R^2$ with $C^2$ boundary. 
\begin{enumerate}
\item\label{thetheorem:part1}
	\textup{(Compactness)}	
For any sequence $\e_n\to0$, let a family $\{u_n\}\subset K$ satisfy
	$$ \limsup_{n\to\infty} \G_{\es_n}(u_n)<\infty.$$
Then there exists a subsequence, denoted again $\e_n$, such that
	\begin{eqnarray}
	    \label{conv:u-weak}
		&u_n \ras \frac 12 & \mbox{weakly-\star}\ \mbox{in}\ L^{\infty}(\o),\\
		&\mu_n:= \es_n|\nabla u_n|\ras \mu:=\frac12 \L^2\,\llcorner\,\o & \mbox{weakly-\star}\ \mbox{in}\ RM(\o).\notag
	\end{eqnarray}		
	Let $P_n(x)\in\R^{2\times 2}$ be the projection onto the tangent of $\mu_n$ at $x$. Then there exists a $P\in \mathcal{K}_0(\o)$ such that 
	\begin{equation}\label{pstrong}
		(\mu_n,P_n) \ra (\mu,P)\h \mbox{strongly in $L^2$, in the sense of Definition \ref{def:strongconv}}. 
	\end{equation}				
\item \label{thetheorem:part2}
\textup{(Lower bound)} For every measure-function pair $(\mu,P)\in X$ and for every sequence $\{u_n\}\subset K$, $\es_n \to 0$ such that
$$ (\es_n|\nabla u_n|,P_n) \weakto (\mu,P)\h \mbox{weakly in $L^2$, in the sense of Definition \ref{def:weak-coup-conv}}, $$
	it holds
	\begin{equation}
	\label{ineq:lowerboundintro}
		\liminf_{n\to\infty}\, \G_{\es_n}(u_n)\, \geq\, \G_0(\mu,P).	
	\end{equation} 					
\item\label{thetheorem:part3}
\textup{(Upper bound)}  Let $\Omega$ be a tubular neighbourhood of width $2\delta$, with boundary $\partial \Omega$ of class $C^3$. Let the sequence $\e_n\to0$ satisfy 
\begin{equation}
\label{cond:resonance}
\delta/2\e_n\in \N.
\end{equation}
If $P\in \mathcal{K}_0(\o)$, then there exists a sequence $\{u_n\} \subset K$ such that
	\begin{eqnarray*}
		&u_n \ras \frac 12 & \mbox{weakly-\star}\ \mbox{in}\ L^{\infty}(\o),\\
		&\mu_n:= \es_n|\nabla u_n|\ras  \mu:=\frac12\L^2\,\llcorner\,\o & \mbox{weakly-\star}\ \mbox{in}\ RM(\o).
	\end{eqnarray*}	
	As above, let $P_n(x)\in\R^{2\times 2}$ be the projection onto the tangent of $\mu_n$ at $x$. Then
    		\begin{equation*}
            	(\mu_n,P_n) \ra (\mu,P)\h \mbox{strongly in $L^2$, in the sense of Definition \ref{def:strongconv}},
    		\end{equation*}
				and
\begin{equation}
\label{ineq:upperboundintro}
\limsup_{n\to\infty}\, \G_{\es_n}(u_n)\leq \G_0(\mu,P).
\end{equation}
\end{enumerate}	
\end{theo}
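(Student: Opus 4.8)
The plan is to handle the three parts in order, using inequality~\pref{ineq:basic} (the Peletier--R\"oger estimate) as the engine for the compactness and lower-bound parts, and an explicit stripe construction for the upper bound.

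\textbf{Part~\ref{thetheorem:part1} (compactness).} From $\G_{\es_n}(u_n)\le C$ we get $\F_{\es_n}(u_n)-|\o|\le C\es_n^2$, so $\es_n\int_\o|\nabla u_n|\le|\o|+C\es_n^2$ is bounded and $\mu_n=\es_n|\nabla u_n|$ has uniformly bounded mass; pass to a weakly-$\star$ convergent subsequence $\mu_n\ras\mu$. Feeding the energy bound into~\pref{ineq:basic}: the width term gives $\int(M_{\es_n}/\es_n-1)^2\,\es_n\,ds\le C\es_n^2\to0$, which forces the $d$-induced tubular neighbourhoods of the interface to have width $\approx2\es_n$ and to fill $\o$; this identifies $\mu=\tfrac12\L^2\llcorner\o$, and together with the fact that $u_n$ then oscillates at scale $\es_n$ under the constraint $\dashint_\o u_n=\tfrac12$ it gives $u_n\ras\tfrac12$. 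The angle term forces the transport direction $\theta_{\es_n}$ to become normal to the interface, so the projections $P_n$ agree with $\theta_{\es_n}^\perp\otimes\theta_{\es_n}^\perp$ up to vanishing error, and the curvature term bounds $\int|\theta_{\es_n}'|^2\,\es_n\,ds$; this yields compactness of $(\mu_n,P_n)$ in the weak measure-function-pair sense. Strong $L^2$ convergence is then obtained by invoking the \emph{non-intersection property} of the jump set: weak-but-not-strong convergence of $P_n$ would encode an oscillation of the stripe directions on a scale at which the (disjoint) interface curves would be forced to intersect; ruling this out upgrades the convergence to strong. Finally $P\in\mathcal K_0(\o)$: the algebraic constraints $P^2=P$, $\rank P=1$, symmetry survive strong limits; $\div P\in L^2$ follows from the curvature bound; and $P\,\div P=0$ is the parallel-stripe condition, which holds because failure of parallelism would make the tube widths vary, contradicting $M_{\es_n}/\es_n\to1$.

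\textbf{Part~\ref{thetheorem:part2} (lower bound).} If $\liminf_n\G_{\es_n}(u_n)=\infty$ there is nothing to prove; otherwise select a subsequence realizing the liminf with bounded energy. Part~\ref{thetheorem:part1}, applied along a further subsequence, produces strong (hence weak) convergence of $(\mu_n,P_n)$ to some $(\tfrac12\L^2\llcorner\o,\tilde P)$ with $\tilde P\in\mathcal K_0(\o)$; by uniqueness of weak limits this forces $\mu=\tfrac12\L^2\llcorner\o$ and $P=\tilde P$, so $\G_0(\mu,P)$ is finite (and if the hypothesised $(\mu,P)$ is not of this form, no subsequence has bounded energy and $\liminf_n\G_{\es_n}(u_n)=\infty=\G_0(\mu,P)$). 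On the bounded-energy subsequence, discard the first two nonnegative terms of~\pref{ineq:basic} and divide by $\es_n^2$ to get $\G_{\es_n}(u_n)\ge\tfrac14\int_0^{L_{\es_n}}|\theta_{\es_n}'(s)|^2\,\es_n\,ds+o(1)$; since $\theta_{\es_n}^\perp$ is the stripe tangent and, in the parallel-stripe regime, $|\div P|$ equals the stripe curvature $|\theta_{\es_n}'|$, the right-hand side is $\tfrac14\int_\o|\div P_n|^2\,d\mu_n+o(1)$. Weak convergence of the generalized-curvature pairs $(\mu_n,\div P_n)$ to $(\mu,\div P)$ (test against smooth fields and integrate by parts, using $P_n\to P$ strongly) together with lower semicontinuity of $(\mu,Q)\mapsto\tfrac14\int|Q|^2\,d\mu$ along such convergence (an Ioffe/Reshetnyak-type argument exploiting convexity of $Q\mapsto|Q|^2$) gives $\liminf_n\G_{\es_n}(u_n)\ge\tfrac14\int_\o|\div P|^2\,d\mu=\G_0(\mu,P)$.

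\textbf{Part~\ref{thetheorem:part3} (upper bound).} Write $\o=\Gamma+B(0,\delta)$ and use the normal coordinates $(s,m)$, $s$ arclength on $\Gamma$, $m\in(-\delta,\delta)$ signed distance; by Theorem~\ref{th:ee} the unique $P\in\mathcal K_0(\o)$ is $\tau(\pi x)\otimes\tau(\pi x)$, the projection onto the curves $\{m=\mathrm{const}\}$. Take $u_n$ to be the $4\es_n$-periodic stripe pattern in $m$ (the period selected by the energy), with interfaces the curves parallel to $\Gamma$ and with the B-phase against $\partial\o$; the resonance condition $\delta/2\es_n\in\N$ makes this terminate cleanly, so $u_n\in BV(\o;\{0,1\})$ with $u_n=0$ on $\partial\o$, and an $O(\es_n^2)$ adjustment of the stripe offsets (using area rather than distance coordinates, to absorb the Jacobian $1-\kappa m$) restores $\dashint_\o u_n=\tfrac12$ without affecting the leading asymptotics. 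The convergences $u_n\ras\tfrac12$, $\mu_n\ras\tfrac12\L^2\llcorner\o$ and $(\mu_n,P_n)\to(\tfrac12\L^2\llcorner\o,P)$ strongly are immediate, since $P_n$ is the projection onto the tangent of the parallel curves, converging uniformly to $P$. It remains to expand $\F_{\es_n}(u_n)-|\o|$ to order $\es_n^2$: the interface term $\es_n\int|\nabla u_n|$ equals $|\o|/2$ plus a curvature correction from the length of parallel curves, and the distance term $\tfrac1{\es_n}d(u_n,1-u_n)$ is evaluated by transporting each A-stripe onto its neighbours along normals; collecting the $\es_n^2$-order contributions shows $\F_{\es_n}(u_n)-|\o|=\es_n^2\,\G_0(\mu,P)+o(\es_n^2)$, which gives $\limsup_n\G_{\es_n}(u_n)\le\G_0(\mu,P)$. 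The main obstacle is the strong-convergence claim in Part~\ref{thetheorem:part1} --- turning the heuristic ``non-intersection forbids oscillation'' into a quantitative estimate --- and, closely related, making~\pref{ineq:basic} and the identification of $\es_n\theta_{\es_n}'$ with $\div P_n$ rigorous when the interface is a union of many curves rather than one; in Part~\ref{thetheorem:part3} the delicate point is the second-order expansion of the Monge--Kantorovich term $d(u_n,1-u_n)$, which must be controlled to accuracy $o(\es_n^2)$.
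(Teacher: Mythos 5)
Your architecture matches the paper's: the Peletier--R\"oger estimate (Proposition~\ref{prop:inequality}, the rigorous form of~\pref{ineq:basic}) drives the weak compactness and the lower bound, the lower bound itself is obtained by a duality/lower-semicontinuity argument for the first variation (this is Lemmas~\ref{bigestimate} and~\ref{lemma:divint} in the paper, which control $\int \tilde P_\e:\nabla\eta\,d\tilde\mu_\e$ by $\G_\e(u_\e)^{1/2}\|\eta\|_{L^2}$ plus vanishing terms, using the modified curves of Definition~\ref{modified} to make ``$\div P_n$'' meaningful), and the recovery sequence is the $4\e$-periodic stripe pattern along level sets of the distance function with an explicit normal transport map, expanded to order $\e^2$. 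Your Parts 2 and 3 are essentially the paper's proofs in outline.

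The genuine gap is the strong-convergence step in Part~\ref{thetheorem:part1}, which is the technical heart of the paper (all of Section~\ref{sec:strongconvergence}) and which you reduce to the sentence ``non-intersection forbids oscillation.'' What is missing is the mechanism that converts this heuristic into an estimate. The paper's idea is a two-scale tangent-comparison inequality (Proposition~\ref{esttg}): for two \emph{disjoint} interface curves and any length scale $\ell>0$,
\[
|P_\e(\g_1(s_1))-P_\e(\g_2(s_2))|\ \lesssim\ \frac1\ell\,|\g_1(s_1)-\g_2(s_2)|\ +\ \ell^{1/2}\Bigl(\textstyle\int|\tilde\alpha'|^2\Bigr)^{1/2}\ +\ \text{(errors from the modification $\g\mapsto\tilde\g$)},
\]
proved by showing that if the tangent lines at two nearby points cross, Jordan-curve considerations force the curves themselves to intersect unless one of them deviates from its tangent line, and that deviation is controlled by $\ell^{3/2}\|\g''\|_{L^2}$. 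One then tests this against a mollifier $\rho^k$ at scale $1/k$, integrates against $\mu_\e\times\mu_\e$ (Proposition~\ref{estimatecomp}), uses the uniform bound $\int|\tilde\alpha_\e'|^2\e\,ds\le C$ coming from Proposition~\ref{prop:inequality}, and takes the limits in the order $\e\to0$, $k\to\infty$, $\ell\to0$ to conclude that the Young measures $\nu_x$ of the limiting graph measure are Dirac masses --- which is exactly strong convergence. None of the ordering of scales, the treatment of short or multiply-wound curves (periodicity, Corollary~\ref{corperiod}), or the Young-measure disintegration appears in your proposal, and without the quantitative inequality the claim that ``oscillation would force intersection'' does not by itself exclude, say, slowly varying but non-converging directions on well-separated curves. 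A secondary, smaller gap: your justification of $P\,\div P=0$ via ``varying tube widths'' is not the paper's argument; the paper obtains it from Brakke's orthogonality $P_\e H_\e=0$ at level $\e$ together with the weak--strong convergence Theorem~\ref{theo:weakstrong}, which requires the strong convergence of $P_\e$ you have not yet established.
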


This theorem can be summarized by the statement that $\G_{\e_n}$ Gamma-converges to $\G_0$, provided $\e_n$ satisfies~\pref{cond:resonance}. The underlying concept of convergence is given by the measure-function-pair convergence of the pair $(\mu_n, P_n)$ in combination with the condition $u_n\weakto 1/2$. 
\begin{remark}
The convergence employed in the \textit{liminf} inequality (point \ref{thetheorem:part2}) is weaker than then convergence required for the \textit{limsup} inequality (point \ref{thetheorem:part3}). This kind of asymmetric convergence is also called Mosco-convergence and was introduced in \cite{Mosco67} for bilinear forms on Hilbert spaces. In general it is not weaker than $\Gamma$-convergence in the strong topology; if a strong (asymptotic) compactness property holds, as in point \ref{thetheorem:part1}, then the two notions of Mosco- and $\Gamma$-convergence are equivalent \cite[Lemma 2.3.2]{Mosco94}.
\end{remark}
\begin{remark}
There is an asymmetry in Theorem~\ref{thetheorem} in the conditions on $\Omega$ and $\e_n$: while the lower bound states no requirements on $\Omega$ and $\e_n$, the upper bound requires (a) that $\Omega$ is tubular, and (b) that $\e_n$ is related to the width of the tube, and (c) that $\Omega$ has higher regularity ($C^3$). 

Part of this asymmetry is only appearance. The tubular nature of $\Omega$ is actually also required in the lower bound, but this requirement is implicit in the condition that $\mathcal{K}_0(\o)$ is non-empty; put differently, the sequence $\mathcal G_{\e_n}(u_n)$ can only be bounded if $\Omega$ is tubular.
We comment on this issue, as well as condition~\pref{cond:resonance}, in the next section. The regularity condition on $\Omega$, on the other hand, constitutes a real difference between the upper and lower bound results. It arises from higher derivatives in the construction of the recovery sequence, and this issue is further discussed in Remark~\ref{rem:higher-regularity}.
\end{remark}


\subsection{Discussion}

As described above, the aim of this paper is to prove a weak version of the statement `stripes are best'. The convergence result of Theorem~\ref{thetheorem} makes this precise.

The theorem characterizes the behaviour of a sequence of structures~$u_n$ for which $\F_{\e_n}(u_n)- |\Omega| = O(\e_n^2)$, or equivalently, $\G_{\e_n}(u_n) = O(1)$. Such structures become stripe-like, in the sense that
\begin{itemize}
\item the interfaces between the sets $\{u_n=0\}$ and $\{u_n=1\}$ become increasingly parallel to each other,
\item the spacing between the interfaces becomes increasingly uniform, and
\item the limit value of the energy $\G_{\e_n}(u_n)$ along the sequence is the squared curvature of the limiting stripe pattern.
\end{itemize}
The first property corresponds to the statement~\pref{pstrong} that $(\mu_n,P_n)\to (\mu,P)$ in the strong sense, and the third 
one is contained in the combination of~\pref{ineq:lowerboundintro} and~\pref{ineq:upperboundintro}. The second property appears in a weak form in the weak convergence~\pref{conv:u-weak} of $u_n$ to $1/2$, and in a stronger form in the statement $M_\e/\e\to 1$ after Proposition~\ref{prop:inequality}.

\medskip

A slightly different way of describing Theorem~\ref{thetheorem} uses a vague characterization of stripe patterns in the plane---see Figure~\ref{fig:types_of_variation}. 
\def\hht{2cm}%
\def\spacing{\hskip1cm}%

\begin{figure}[ht]
\begin{minipage}{2,9cm}
\centering{
\includegraphics[height=\hht,clip=on]{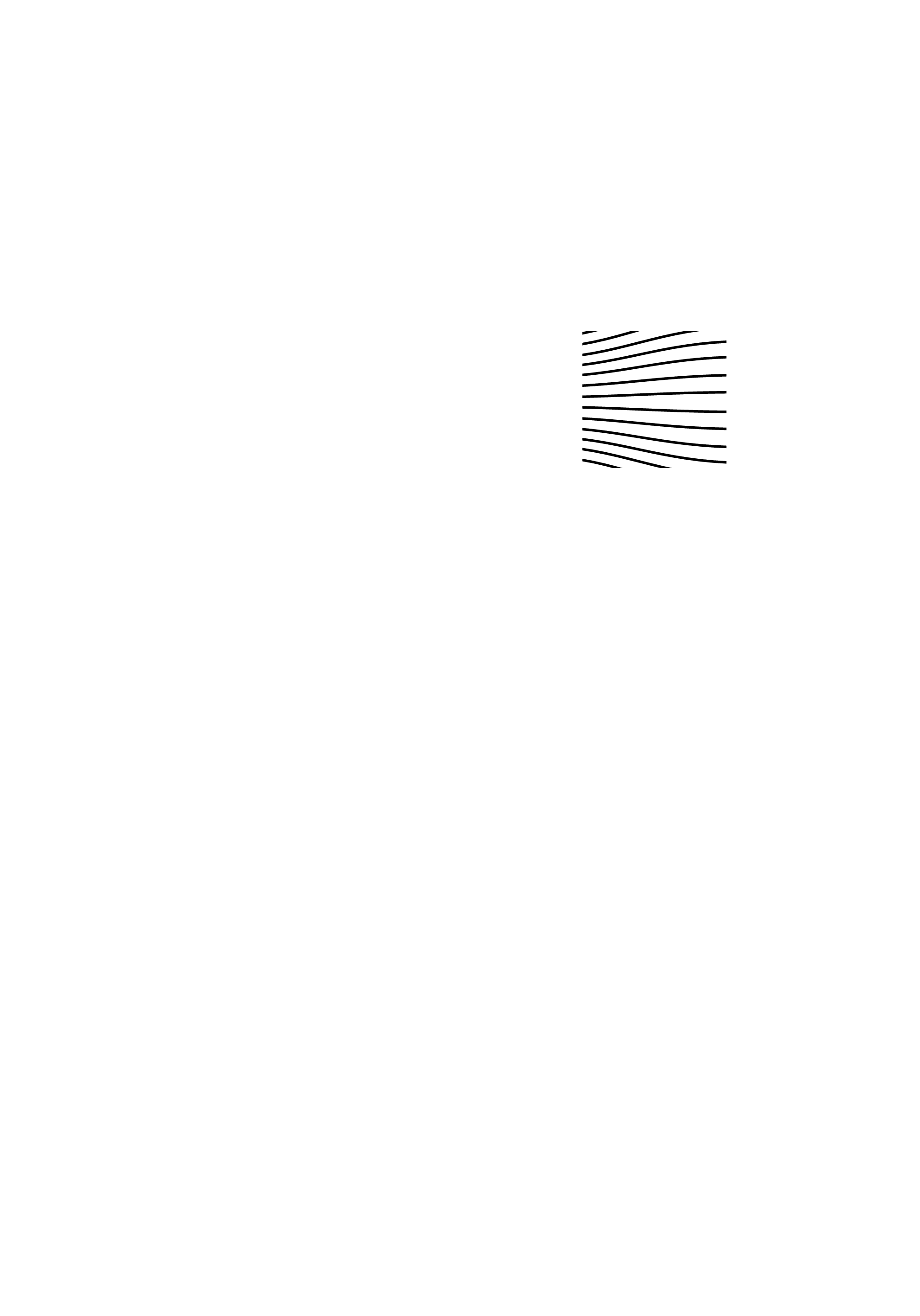}\label{subfig:cw}\\
\small a)\! width variation \phantom{indent}}
\end{minipage}
\begin{minipage}{2,9cm}
\centering{
\includegraphics[height=\hht,clip=on]{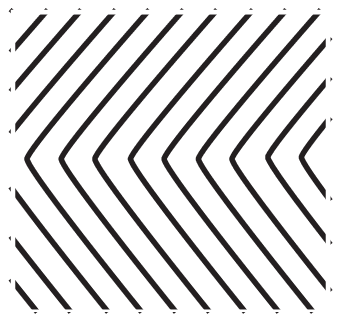}\label{subfig:gb}\\
\small b)\! grain boundary \phantom{indent}}
\end{minipage}
\begin{minipage}{5.3cm}
\centering{
\includegraphics[height=\hht,clip=on]{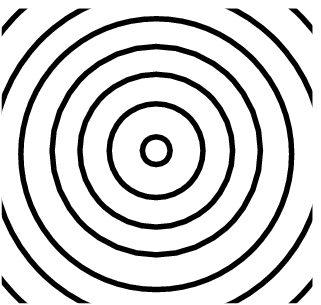}\hspace{0.3cm}\includegraphics[height=\hht,clip=on]{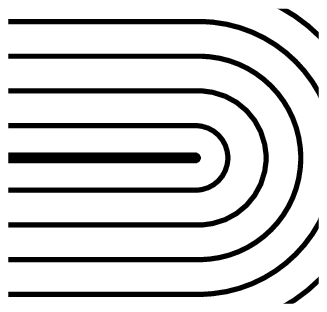}\label{subfig:target}\\
\small c)\! target and U-turn patterns \phantom{indent}}
\end{minipage}
\begin{minipage}{3,1cm}
\centering{
\includegraphics[height=\hht,clip=on]{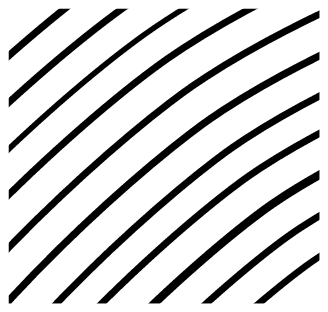}\\
\small d)\! smooth directional variation}
\end{minipage}\\
\caption{Canonical types of stripe variation in two dimensions. }
\label{fig:types_of_variation}
\end{figure}
Theorem~\ref{thetheorem} states that the  decay condition $\F_{\e_n}(u_n) - |\o| = O(\e_n^2)$  excludes all but the last type. This can also be recognized from a formal calculation based on~\pref{ineq:basic}, which shows that width variation is penalized  by $\F_\e$ at order $O(1)$, grain boundaries at order $O(\e)$, and the target and U-turn patterns at order $O(\e^2|\log\e|)$.

If one interprets the figures in Figure~\ref{fig:types_of_variation} not as discrete stripes but as a visualization of a line field $P$ that is defined everywhere, then the condition $\div P\in L^2$ similarly excludes all but the last example. This follows from  an explicit (but again formal) calculation, which shows that the width variation fails to satisfy $P\div P=0$, that a grain boundary leads to a singularity in $\div P$ comparable to a locally finite measure, and that the target and U-turn patterns satisfy $\div P\in L^q$ for all $1\leq q<2$.

From both points of view---the behaviour of the functional along the sequence and the conditions on the limiting line field---only the smooth variation is admissible. However, since the target and U-turn patterns only just fail the two tests, it would be interesting to explore different rescalings of the functionals $\F_\es$ in order to allow for limit patterns of this type. The main impediment for doing so can be recognized in the discussion following the statement of Proprosition~\ref{estimatecomp}: if $\G_\e$ is unbounded as $\e\to0$, then the estimate~\pref{limites} no longer holds; therefore the proof of strong compactness no longer follows.

\medskip

Yet another way of phrasing the result of Theorem~\ref{thetheorem} is as follows: deviation from the optimal, straight-and-uniform stripe pattern carries an energy penalty. The combination of Theorems~\ref{th:ee} and~\ref{thetheorem} shows that the same is true for a mismatch in boundary behaviour: boundedness of $\G_\e$ forces the line field to be parallel to $\partial \Omega$, resulting in the fairly rigid situation that the limit solution set is empty for any other domain than a tubular neighbourhood. 

\medskip
A corollary of Theorem~\ref{thetheorem} is the fact that both stripes and energy density become evenly distributed in the limit $\e\to0$. This is reminiscent of the uniform energy distribution result of a related functional in~\cite{AlbertiChoksiOtto09}. Note that Theorem~\ref{thetheorem} goes much further, by providing a strong characterization of the \emph{geometry} of the structure.

\medskip
One result that we do not prove is a statement that for any \emph{fixed} $\e>0$ global minimizers themselves are stripe-like, or even tubular. At the moment it is not even clear whether such a statement is true. This is related to the condition~\pref{cond:resonance}, which expresses the requirement that an integer number of optimal-width layers fit exactly into~$\Omega$. 

The role of condition~\pref{cond:resonance} is most simply described by taking $\Omega$ to be a square, two-dimensional flat torus of size $L$. If $L$ is an integer multiple of $2\e$, then there exist structures---parallel, straight stripes---with zero energy $\G_\e$. This can be recognized in~\pref{ineq:basic}, where all terms on the right-hand side vanish. If $L$ is such that no straight-stripe patterns with optimal width exist, however, then $\G_\e$ is necessarily positive. In this case we can not exclude that a wavy-stripe structure (reminiscent of the wriggled stripes of of~\cite{RenWei05}) has lower energy, since by slightly modulating the stripes the average width (given by $M_\e$ in~\pref{ineq:basic}) may be closer to $\e$, at the expense of introducing a curvature term $\int |\theta'_\e|^2$.

\medskip

The introduction of projections, or line fields, for the representation of stripe patterns seems to be novel, even though they are commonly used in the modelling of liquid crystals (going back to De Gennes~\cite{deGennes71}). Ercolani \emph{et al.}~\cite{ErcolaniIndikNewellPassot03}, for instance, discuss the sign mismatch that happens at a U-turn pattern, and approach this mismatch by replacing the domain by a two-leaf Riemann surface. Using line fields appears to have the advantage of avoiding such mathematical contraptions, and staying closer to the physical reality.

\subsection{Plan of the paper}

In Section \ref{definitions} we recall the basic definitions and properties concerning Mass Transport, and we introduce line fields and measure-function pairs with the related notions of convergence. In Section \ref{sec:lowerbound} we prove that sequences with bounded energy~$\G_\es$ are relatively compact with respect to the weak convergence for measure-function pairs and we prove the \textit{liminf} inequality of $\G_\es$ with respect to weak convergence (Theorem \ref{thetheorem}, part~\ref{thetheorem:part2}). The main tool is the estimate in Proposition~\ref{prop:inequality}, obtained in \cite{PeletierRoegerTA}. In Section  \ref{sec:strongconvergence} we prove compactness with respect to the strong convergence for measure-function pairs (Theorem \ref{thetheorem}, part~\ref{thetheorem:part1}). In Section \ref{sec:limsup} we construct explicitly a recovery sequence satisfying the \textit{limsup} inequality for $\G_\es$ (Theorem \ref{thetheorem}, part~\ref{thetheorem:part3}), by using  the characterization of $\mathcal{K}_0$ obtained in \cite{PeletierVeneroniTA}.  

\subsection{Summary of notation}
\begin{small}
\begin{longtable}{lll}
  $\F_\e(\cdot)$         & energy functional & \eqref{eq:functional}\\
  $\G_\e(\cdot)$         & rescaled functional & \eqref{def:rescaled_functional}\\
  $K$         & domain of $\F_\e$, $\G_\e$ & \eqref{defk}\\
  $\G_0(\cdot,\cdot)$		& limit functional &\pref{def:G0}\\
  $X$						& space of limit pairs $(\mu,P)$  & \pref{def:X}\\
  $\mathcal K_0(\Omega)$		& domain of $\G_0$  &Def.~\ref{limprob}\\
  $d(\cdot,\cdot)$             & Monge-Kantorovich distance &Def.~\ref{def:d1}\\
  $e^\perp$          & $90^\circ$ counter-clockwise rotation of the vector $e$\\
  $X$				& space of measure-projection pairs $(\mu,P)$ & \pref{def:X}\\
  $RM(\Omega)$		& space of Radon measures on $\Omega$\\
  $\L^n$				& $n$-dimensional Lebesgue measure\\
  $\Lip_1(\R^2)$        & set of Lipschitz continuous functions\\
                     & with Lipschitz constant at most 1\\
  $\mathcal{T},\mathcal{E}$
                    & transport set and set of endpoints of rays &
  Def.~\ref{def:rays}\\
  $[\mu,P]$			& graph measures	& Def.~\ref{def:graphm}\\
  $\hf$				& one-dimensional Hausdorff measure\\
  $\partial^* A$		& essential boundary of the set $A$ & \cite[Chapter~3.5]{afp}\\
  $E$               & $\{s: \gamma(s) \text{ lies inside a transport ray}\}$ &
  Def.~\ref{def:rayquantities}\\
  $\theta(s)$          & ray direction in $\gamma(s)$ & Def.~\ref{def:rayquantities}\\
  $\ell^+(s),\ell^-(s),l^+(s)$     & positive, negative and effective\\
                             & ray length in $\gamma(s)$&
  Def.~\ref{def:rayquantities}\\
  $\alpha(s),\beta(s)$    & direction of ray and \\
                          & difference to tangent at $\gamma(s)$& Def.~\ref{def:alphabeta}\\
  $\mp(s,\cdot)$             & mass coordinates & Def.~\ref{defi:coordm}\\
  $\tra(s,\cdot)$             & length coordinates & \eqref{def:tra}\\
  $M(s)$               & mass over $\gamma(s)$& Def.~\ref{defi:coordm}\\
  \\
  $E_i,\theta_i$, & corresponding quantities for a \\
                    
  $\ell^+_i,\ell^-_i,l_i^+$&collection
    $\{\gamma_i\}$ & Rem.~\ref{rem:dropping-indices}\\

    $\alpha_i,\beta_i,\mp_i,\tra_i,M_i$
\\
\\
  $E_{\e,i},\theta_{\e,i},$& corresponding quantities for  a\\
  $\ell^+_{\e,i},\ell^-_{\e,i},l_{\e,i}^+$ & collection $\{\gamma_{\e,i}\}$ & Rem.~\ref{rem:dropping-indices}\\
  $\alpha_{\e,i},\beta_{\e,i},\mp_{\e,i},\tra_{\e,i},M_{\e,i}$
\end{longtable}
\end{small}

\textbf{Acknowledgement.} The authors gratefully acknowlegde many insightful and pleasant discussions with dr. Yves van Gennip and dr. Matthias R\"oger.


\section{Preliminaries and preparation}\label{definitions}
\setcounter{theo}{0}
\setcounter{equation}{0}

\subsection{The Mass Transport Problem}
\label{sec:mtp}
In this section we introduce some basic definitions and concepts and we mention some results that we use later.
\begin{defi}
\label{def:d1}
	Let $u,v \in L^1(\o)$ 
	satisfy the mass balance 
	\begin{equation}\label{eq:massbalance}
		\int_{\o} u(x)\,dx = \int_{\o} v(x)\,dx.
	\end{equation}
	The Monge-Kantorovich distance $d_1(u,v)$ is defined as 
	\begin{eqnarray}\label{eq:def-d-1}
  d_1(u,v) &:=& \min \int_{\o\times\o} |x-y| \,d\gamma(x,y)
	\end{eqnarray}
	where the minimum is taken over all Radon measures $\gamma$ on $\o\times\o$
	with \emph{marginals}
	$u\L^2$ and $v\L^2$, i.e. such that 
	\begin{eqnarray}
  	\int_{\o\times\o} \varphi(x) \,d\gamma(x,y) &=& \int_\o\varphi u\,d\L^2,\label{eq:marg-u}\\
  	\int_{\o\times\o} \psi(y) \,d\gamma(x,y) &=& \int_\o\psi v\,d\L^2\label{eq:marg-v}
	\end{eqnarray}
	for all $\varphi,\psi\in C_c(\o)$.
\end{defi}

There is a vast literature on the optimal mass transportation problem
and an impressive number of applications, see for example \cite{EGan,TW,CFC,Amb,Vil,JKF,Ott}.
We only list a few results which we will use later.
\begin{theo}[\cite{CFC,FeC}]
Let $u,v$ be given as in Definition \ref{def:d1}.
\begin{enumerate}
\item There exists an optimal transport plan $\gamma$ in~\pref{eq:def-d-1}.
\item The optimal plan $\gamma$ can be parametrized in terms of a Borel measurable \emph{optimal transport map} $S:\o\to\o$, in the following way: for every $\zeta\in C_c(\o\times\o)$
\[
\int_{\o\times\o} \zeta(x,y)\, d\gamma(x,y) = \int_\o \zeta(x,S(x))u(x)\, dx,
\]
or equivalently, $\gamma = (id\times S)_\#u\L^2$. In terms of $S$, 
\[
d_1(u,v) = \int _{\o} |S(x)-x| u(x)\, dx.
\]
\item We have the dual formulation
\begin{equation}
\label{eq:duality}
d_1(u,v) = \sup \left\{\int _\Omega \phi(x)(u-v)(x)dx: \phi\in \Lip_1(\o)\right\},
\end{equation}
where $\Lip_1(\o)$ denotes the set of Lipschitz functions on $\o$
with Lipschitz constant not larger than $1$.
\item There exists an \emph{optimal Kantorovich potential} $\phi\in \Lip_1(\o)$ which achieves optimality in~\pref{eq:duality}.
\item Every optimal transport map $S$ and every optimal Kantorovich potential $\phi$
satisfy
\begin{eqnarray}\label{eq:dual-crit}
\phi(x) -\phi(S(x)) &=& |x-S(x)|\quad\text{ for almost all }x\in\spt(u).
\end{eqnarray}
\end{enumerate}
\end{theo}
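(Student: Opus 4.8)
The plan is to dispatch the ``soft'' statements (parts 1, 3, 4) by the direct method and Kantorovich duality, to obtain the optimal-map representation (part 2) --- the only genuinely hard point --- from the structure of the dual potential, and finally to read off the complementary-slackness identity (part 5) from the optimality of the pair $(\gamma,\phi)$. Since the result is classical and we cite \cite{CFC,FeC}, in the paper we simply invoke these references; what follows is how one would argue. For part 1, the set of admissible plans is non-empty (it contains $(\int_\o u)^{-1}(u\L^2)\otimes(v\L^2)$), every such $\gamma$ is supported in the compact set $\overline\o\times\overline\o$ and has fixed total mass, so the admissible set is tight and weak-$\star$ sequentially compact, and it is weak-$\star$ closed because the marginal constraints \pref{eq:marg-u}--\pref{eq:marg-v} are tested against functions in $C_c(\o)$. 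As $(x,y)\mapsto|x-y|$ is bounded and continuous on $\overline\o\times\overline\o$, the cost is weak-$\star$ continuous, so a minimizer exists. For parts 3 and 4, the easy inequality $d_1(u,v)\ge\sup\{\dots\}$ follows by writing, for any admissible $\gamma$ and $\phi\in\Lip_1(\o)$,
\[
\int_\o\phi(u-v)\,d\L^2=\int_{\o\times\o}\bigl(\phi(x)-\phi(y)\bigr)\,d\gamma(x,y)\le\int_{\o\times\o}|x-y|\,d\gamma(x,y);
\]
the reverse is Kantorovich duality, which I would obtain from the Fenchel--Rockafellar theorem on $C(\overline\o\times\overline\o)$ (or from a minimax argument), using boundedness of the cost. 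Existence of an optimal potential then follows by subtracting a constant and applying Arzel\`a--Ascoli to a maximizing sequence in $\Lip_1(\o)$, which is equibounded and equicontinuous, the map $\phi\mapsto\int_\o\phi(u-v)$ being continuous under uniform convergence.

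\emph{Part 2} is Monge's original problem for the non-strictly-convex cost $|x-y|$, where $\gamma$ need not be unique and Brenier's argument fails. I would follow the Sudakov--Ambrosio--Caffarelli--Feldman--McCann--Trudinger--Wang line: fix an optimal potential $\phi$ from part 4, partition $\o$ (up to an $\L^2$-null set) into maximal \emph{transport rays} --- segments along which $\phi$ has slope exactly $1$ --- and show that any optimal $\gamma$ moves mass only along these rays. Disintegrating $u\L^2$ over the ray decomposition, the delicate step is to prove that the conditional measures on $\L^2$-a.e.\ ray are absolutely continuous with respect to $\hf$; this lets one use on each ray the explicit monotone one-dimensional transport map and glue these into a single Borel map $S$ with $\gamma=(id\times S)_\#(u\L^2)$. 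Equivalently, one approximates $|x-y|$ by the strictly convex costs $|x-y|^{1+\delta}$, takes the unique Brenier--Gangbo--McCann maps $S_\delta$, and passes to the limit $\delta\to0$, the difficulty again being to rule out mass collapse via the ray structure. Once $S$ is available, the change-of-variables formulas for $\int\zeta\,d\gamma$ and for $d_1(u,v)$ are immediate.

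\emph{Part 5} is then a complementary-slackness argument: for an optimal plan $\gamma$ and an optimal potential $\phi$, the chain
\[
d_1(u,v)=\int_\o\phi(u-v)\,d\L^2=\int_{\o\times\o}\bigl(\phi(x)-\phi(y)\bigr)\,d\gamma\le\int_{\o\times\o}|x-y|\,d\gamma=d_1(u,v)
\]
consists of equalities, so the non-positive function $\phi(x)-\phi(y)-|x-y|$ vanishes $\gamma$-a.e.; pushing this through $\gamma=(id\times S)_\#(u\L^2)$ yields $\phi(x)-\phi(S(x))=|x-S(x)|$ for $\L^2$-a.e.\ $x\in\spt(u)$, which is \pref{eq:dual-crit}.

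\emph{Main obstacle.} Everything outside part 2 is soft functional analysis; the entire difficulty sits in the existence and Borel measurability of $S$ for the linear cost, and within that, in the absolute continuity of the conditional measures along transport rays (equivalently, non-degeneracy of the ray map). This is exactly where the boundedness and $C^2$ regularity of $\o$ and the absolute continuity of $u\L^2$ are used, and it is the reason we appeal to \cite{CFC,FeC} rather than reproduce the construction.
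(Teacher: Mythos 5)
Your proposal is correct and consistent with the paper, which (like you) does not prove this theorem but simply cites \cite{CFC,FeC}; your sketch is a faithful outline of the classical argument in those references, correctly isolating the one genuinely hard step (existence and measurability of the Monge map for the linear cost via the transport-ray decomposition and absolute continuity of the conditional measures on rays, or equivalently the $|x-y|^{1+\delta}$ approximation). Nothing further is needed.
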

The optimal transport map and the optimal Kantorovich potential are in general not
unique. We can choose $S$ and $\phi$ enjoying some additional properties.
\begin{lemma}[\cite{CFC,FeC}]\label{lemma:add-prop}
	There exists an optimal transport map $S\in \mathcal{A}(u,v)$ and an optimal
	Kantorovich potential $\phi$ such that
		\begin{eqnarray}
  		\phi(x) &=& \min_{y\in\spt(v)}\big(\phi(y)+|x-y|\big)\quad\text{ for
  		any }x\in\spt(u),\label{eq:add-prop-phi1}\\ 
  		\phi(y) &=& \max_{x\in\spt(u)}\big(\phi(x)-|x-y|\big)\quad\text{ for
  		any }y\in\spt(v),\label{eq:add-prop-phi2}
	\end{eqnarray}
	and such that $S$ is the unique monotone transport map in the sense of
	\cite{FeC},
	\begin{gather*}
  	\frac{x_1-x_2}{|x_1-x_2|}
  	+\frac{S(x_1)-S(x_2)}{|S(x_1)-S(x_2)|}\,\neq\, 0\ \text{ for all
  	}x_1\neq x_2\in\R^2\text{ with }S(x_1)\neq S(x_2).
	\end{gather*}
\end{lemma}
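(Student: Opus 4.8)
\medskip

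The plan is to produce the special potential by a double $c$-transform of an arbitrary optimal one, and then to manufacture $S$ from the one-dimensional structure of the Monge problem along transport rays.

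\textbf{Normalising the potential.} I would start from any maximiser $\phi_0\in\Lip_1(\o)$ in~\pref{eq:duality}, provided by the preceding theorem, and set
\[
  \phi_1(x) := \min_{y\in\spt(v)}\bigl(\phi_0(y)+|x-y|\bigr),
  \qquad
  \phi(x) := \max_{x'\in\spt(u)}\bigl(\phi_1(x')-|x'-x|\bigr),
\]
the extrema being attained by compactness of $\spt(u)$ and $\spt(v)$. Each operation returns a $1$-Lipschitz function, being an infimum (resp.\ supremum) of $1$-Lipschitz functions. Using the $1$-Lipschitz bound on the previous function one checks that $\phi_1\ge\phi_0$ on $\o$ with equality on $\spt(v)$, and $\phi\le\phi_1$ on $\o$ with equality on $\spt(u)$; since $u,v\ge0$ this yields
\[
  \int_\o\phi\,(u-v)\;\ge\;\int_\o\phi_1\,(u-v)\;\ge\;\int_\o\phi_0\,(u-v),
\]
so $\phi$ is again optimal. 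Then~\pref{eq:add-prop-phi2} holds by the definition of $\phi$ together with $\phi=\phi_1$ on $\spt(u)$, and~\pref{eq:add-prop-phi1} follows because for $x\in\spt(u)$ one has $\phi(x)=\phi_1(x)=\min_{y\in\spt(v)}(\phi_1(y)+|x-y|)$, while $\phi\le\phi_1$ on $\o$ and the $1$-Lipschitz estimate for $\phi$ force $\min_{y\in\spt(v)}(\phi(y)+|x-y|)=\phi(x)$.

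\textbf{Building the monotone map.} With $\phi$ fixed I would use the transport-ray decomposition: the region where $\phi$ is not locally constant splits into maximal nondegenerate segments along which $\phi$ is affine with slope exactly one, and by~\pref{eq:dual-crit} any optimal plan moves mass only within a single ray, in the direction of decreasing $\phi$. Restricted to a ray the problem is a one-dimensional Monge problem, whose unique optimal map is the monotone (nondecreasing) rearrangement of the two one-dimensional marginals. I would then disintegrate $u\L^2$ and $v\L^2$ along the ray foliation, define $S$ ray-by-ray as this monotone rearrangement, and verify through a measurable selection that the resulting $S$ is Borel and pushes $u\L^2$ forward to $v\L^2$. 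Since $S$ moves each point along its ray, it satisfies $\phi(x)-\phi(S(x))=|x-S(x)|$, whence $\int_\o|S-\mathrm{id}|\,u=\int_\o(\phi-\phi\circ S)\,u=\int_\o\phi\,(u-v)=d_1(u,v)$, so $S$ is optimal.

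\textbf{Monotonicity and uniqueness.} The inequality in the statement I would check case by case. If $x_1\ne x_2$ lie on a common ray with direction $e$ and $S(x_1)\ne S(x_2)$, monotonicity of the one-dimensional rearrangement forces $\frac{x_1-x_2}{|x_1-x_2|}$ and $\frac{S(x_1)-S(x_2)}{|S(x_1)-S(x_2)|}$ to be the \emph{same} unit vector $\pm e$, so their sum is $\pm2e\ne0$. In the complementary case, where $x_1$ and $x_2$ lie on distinct rays, one uses that transport rays are pairwise non-crossing (meeting at most at a shared endpoint) to exclude anti-alignment; this is the geometric content of~\cite{FeC}. Uniqueness then follows because for the fixed $\phi$ the ray decomposition is determined and the monotone rearrangement on each ray is unique, while conversely any transport map satisfying the monotonicity inequality must respect the ray structure and hence agree with $S$ on each ray, so coincides with $S$ $\L^2$-a.e.

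\textbf{Main obstacle.} Everything delicate is concentrated in the one-dimensional reduction: showing that the union of degenerate rays and ray endpoints is $\L^2$-negligible, and that the one-dimensional marginals obtained by disintegrating $u\L^2$ and $v\L^2$ along rays inherit enough regularity (nonatomicity, or absolute continuity) for the monotone maps to exist and to be glued measurably. This amounts to a Jacobian/change-of-variables estimate along the ray field and is precisely the technical heart of~\cite{CFC,FeC}; in a self-contained treatment I would isolate it as a separate lemma and quote it rather than reprove it.
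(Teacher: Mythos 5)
The paper does not prove this lemma: it is imported verbatim from \cite{CFC,FeC} and used as a black box, so there is no in-paper argument to compare against. Your sketch is the standard construction from those references --- a double $c$-transform to normalise the Kantorovich potential (your verification of \pref{eq:add-prop-phi1}--\pref{eq:add-prop-phi2} and of the preservation of optimality is correct), followed by the ray decomposition and ray-wise monotone rearrangement --- and you correctly identify and attribute the genuinely technical core (negligibility of ray endpoints, disintegration along rays, measurable selection), so the proposal is sound and consistent with the source the paper relies on.
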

We will extensively use the fact that by~\eqref{eq:dual-crit} the optimal transport is organized along
{\itshape transport rays} which are defined as follows.
\begin{defi}\cite{CFC}
	\label{def:rays}
	Let $u,v$ be as in Definition \ref{def:d1} and let $\phi\in \Lip_1(\o)$
	be the optimal transport map as in Lemma \ref{lemma:add-prop}.
	A {\itshape transport ray} is a line segment in $\o$ with endpoints $a,b\in\o$ such that
	$\phi$ has unit slope on that segment and $a,b$ are maximal, that is
	\begin{gather*}
  		a\in\spt(u),\,b\in\spt(v),\quad a\neq b,\\
  		\phi(a)-\phi(b)\,=\,|a-b|\\
  		|\phi(a+t(a-b))-\phi(b)| \,<\, |a+t(a-b)-b|\quad\text{ for all }t>0,\\
  		|\phi(b+t(b-a))-\phi(a)| \,<\, |b+t(b-a)-a|\quad\text{ for all }t>0.
	\end{gather*}
	We define the {\itshape transport set }$\mathcal{T}$ to consist of all points which
	lie in the (relative) interior of some transport ray and $\mathcal{E}$ to be the
	set of all endpoints of rays. 
\end{defi}

Some important properties of
transport rays are given in the next proposition.
\begin{lemma}[\cite{CFC}]\label{prop:cfm-rays} Let $\mathcal{E}$ be as in Definition \ref{def:rays}.
	\begin{enumerate}
		\item Two rays can only intersect in a common endpoint.
		\item The endpoints $\mathcal{E}$ form a Borel set of Lebesgue measure zero.
		\item If $z$ lies in the interior of a ray with endpoints $a\in\spt(u),b\in\spt(v)$
		then $\phi$ is differentiable in $z$ with $\nabla\phi(z)\,=\,(a-b)/|a-b|$.
	\end{enumerate}
\end{lemma}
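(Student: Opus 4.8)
The plan is to prove the three assertions in the order (3), then (1), then (2), since the argument for (1) uses (3) and the argument for (2) uses (1).

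For part (3) I would argue by a squeeze. Let $z$ lie in the relative interior of a transport ray with endpoints $a\in\spt(u)$, $b\in\spt(v)$, so that $\phi(a)-\phi(b)=|a-b|$ and $z=(1-s)a+sb$ with $s\in(0,1)$; in particular $z\ne a$ and $z\ne b$. Adding $\phi(a)-\phi(z)\le|a-z|$ and $\phi(z)-\phi(b)\le|z-b|$ (both valid since $\phi\in\Lip_1(\o)$) and using $|a-z|+|z-b|=|a-b|=\phi(a)-\phi(b)$ forces equality in both inequalities, so $\phi(z)=\phi(a)-|z-a|=\phi(b)+|z-b|$. Then for every $w\in\o$,
\[
\phi(a)-|w-a|\ \le\ \phi(w)\ \le\ \phi(b)+|w-b|,
\]
and the two outer functions are differentiable at $w=z$ (since $z\ne a,b$), take the common value $\phi(z)$ there, and have gradient $(a-z)/|a-z|=(z-b)/|z-b|=(a-b)/|a-b|$ at $z$ (because $a-z$ and $z-b$ are positive multiples of $a-b$). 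A function squeezed between two functions that are differentiable at a point, agree with it there, and share a gradient there is itself differentiable there with that gradient; hence $\phi$ is differentiable at $z$ and $\nabla\phi(z)=(a-b)/|a-b|$.

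For part (1), suppose two transport rays $R_1\ne R_2$ meet at a point $z$ which is not a common endpoint; without loss of generality $z$ lies in the interior of $R_1$, with endpoints $a_1,b_1$, so $\nabla\phi(z)=e_1:=(a_1-b_1)/|a_1-b_1|$ by part (3). If $z$ is interior to $R_2$ as well, part (3) gives $\nabla\phi(z)=(a_2-b_2)/|a_2-b_2|$, hence $R_1$ and $R_2$ are collinear; their union $R_1\cup R_2$ is then a segment on which $\phi$ has unit slope, and at least one of the two rays has an endpoint in the interior of $R_1\cup R_2$, contradicting the maximality clause of Definition~\ref{def:rays} for that ray at that endpoint. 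If instead $z$ is an endpoint of $R_2$, the one-sided derivative of $\phi$ at $z$ along $R_2$ equals $-1$, which together with $\nabla\phi(z)=e_1$ forces the direction of $R_2$ at $z$ to be $-e_1$; once more $R_1\cup R_2$ lies on a line carrying $\phi$ with unit slope, and the maximality of $R_1$ at the relevant endpoint (or of $R_2$ at $z$) is violated. In every case we reach a contradiction, so $R_1\cap R_2$ can only be a common endpoint.

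Part (2) is the classical and most delicate point, and the step I expect to be the main obstacle. I would decompose $\mathcal{E}$ according to the type of endpoint (the $\spt(u)$- or the $\spt(v)$-end of its ray), the length of that ray (bounded below by $1/k$, $k\in\N$), and a finite cover of $S^1$ by small caps containing the ray direction; it then suffices to show each such piece $F$ is $\L^2$-null. On $F$ the rays are uniformly long, nearly parallel, mutually disjoint segments, and $e(x):=\nabla\phi(x)$ is well-defined by part (3). For $t\in(0,\tfrac1{2k})$ the slide map $\Psi_t(x):=x-t\,e(x)$ maps $F$ into the transport set, and by part (1) the images $\Psi_t(F)$ are pairwise disjoint subsets of the bounded set $\o$ as $t$ varies. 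The crux is a non-compression estimate $\L^2(\Psi_t(F))\ge c\,\L^2(F)$, uniform in $t$; since $\nabla\phi$ need not even be continuous, this cannot be read off a Jacobian bound, and one must instead use a Fubini argument in coordinates adapted to the common ray direction of $F$, again exploiting that rays do not cross. Granting the estimate, summing over countably many values $t_n\downarrow 0$ and using disjointness inside the bounded set $\o$ forces $\L^2(F)=0$. The careful measure-theoretic execution is exactly the content of the results of Caffarelli, Feldman and McCann cited above (\cite{CFC}; see also \cite{FeC}), which I would reproduce.
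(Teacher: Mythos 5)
The paper does not actually prove this lemma: it is quoted verbatim from Caffarelli--Feldman--McCann \cite{CFC} (see also \cite{FeC}), so there is no internal proof to compare against and your proposal necessarily goes beyond what the paper supplies. Your arguments for parts (3) and (1) are complete and correct. The squeeze argument for (3) is the standard one: equality in the triangle inequality pins $\phi(z)=\phi(a)-|z-a|=\phi(b)+|z-b|$, and trapping $\phi$ between the two cones $\phi(a)-|\cdot-a|$ and $\phi(b)+|\cdot-b|$, which touch at $z$ with common gradient $(a-b)/|a-b|$, yields differentiability. Your case analysis for (1) is also sound: in either configuration the two rays are forced onto a common line on which $\phi$ is a single affine function of unit slope, and some endpoint of one ray then lies in the relative interior of the union, violating the strict inequality in the maximality clause of Definition~\ref{def:rays}.

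Part (2), as you acknowledge, is only an outline, and two points in it are not yet proofs. First, you set $e(x):=\nabla\phi(x)$ on a piece $F\subset\mathcal{E}$ "well-defined by part (3)"; but part (3) applies only to relative interiors of rays, and $\phi$ need not be differentiable at an endpoint (indeed a point of $\mathcal{E}$ can be the upper end of several rays). One must instead work with a chosen ray direction per endpoint (the cone decomposition makes any two admissible choices close, which is what the argument actually needs). Second, the non-compression estimate $\L^2(\Psi_t(F))\geq c\,\L^2(F)$ is the entire content of the CFM lemma: it follows from showing that, on a piece where rays are long and lie in a narrow cone of directions, the slide map $\Psi_t$ is expansive up to a constant because two such rays cannot cross (part (1)); without that quantitative step the disjointness-plus-boundedness argument does not close. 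Since the paper itself defers the whole lemma to \cite{CFC}, deferring exactly this step to the same reference is legitimate, but you should state explicitly that it is the one substantive ingredient you are importing rather than proving.
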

In Section \ref{sec:lowerbound} we will use the transport rays to parametrize the support of $u$ and
to compute the Monge-Kantorovich distance between $u$ and $v$.


\subsection{Line fields}\label{sec:linefields}
As explained in the introduction, we will capture the directionality of an admissible function $u\in K$ in terms of a \emph{projection} on the boundary $\partial \supp u$. 
By the structure theorem on functions of bounded variation (e.g.~\cite[Section~5.1]{EvansGariepy92}), $|\nabla u|$ is a Radon measure on $\Omega$, and $\supp |\nabla u|$ coincides with the essential boundary $\Gamma :=\partial^* \supp u$ of $\supp u$ up to a $\hf$-negligible set. (Recall that the essential boundary is the set of points with Lebesgue density strictly between 0 and 1; $\hf$ is one-dimensional Hausdorff measure). There exists a $|\nabla u|$-measurable function $\nu:\R^2\to S^1$ such that the vector-valued measure $\nabla u$ satisfies $\nabla u = \nu|\nabla u|$, at $|\nabla u|$-almost every $x\in\Omega$. We then set
\[
P(x) := \nu(x)^\perp \otimes \nu(x)^\perp
\qquad \text{for }|\nabla u|\text{-a.e.~$x$}.
\]
In this way, we define a \textit{line field} $P(x)\in\R^{2\times 2}$ for $\hf$-a.e. $x\in \Gamma$ (or, equivalently, for $|\nabla u|$-a.e. $x\in \Omega$).

Note that since $\nu$ is $|\nabla u|$-measurable, and $P$ is a continuous function of $\nu$, $P$ is also $|\nabla u|$-measurable.
As a projection it is uniformly bounded, and  therefore
\begin{equation}
\label{bound:PinLinfty}
P\in L^\infty(\Gamma,\hf;\mathbb{R}^{2\times 2}).
\end{equation}		
Moreover, by construction, for $\hf$-a.e. $x\in \Gamma$, $P$ satisfies
		\begin{subequations}
		\label{pj:properties}
		\begin{eqnarray}		
				&P^2(x)=P(x),\\
				&|P(x)|^2=\ds \sum_{i,j}\big|P_{ij}(x)\big|^2=1,\\
				&\mbox{rank}\big(P(x)\big)=1,\\
				&P(x) \mbox{ is symmetric}.
		\end{eqnarray}
		\end{subequations}		
		
\subsection{Measure-function pairs}\label{sec:mfpairs}
As we consider 
a sequence $\{u_n\}\subset K$, the set $\Gamma_n:=\partial^*\supp u_n$ depends on $n$, and therefore the line fields $P_n$ are defined on different sets. For this reason we use the concept of \emph{measure-function pairs}~\cite{Hutchinson86,Moser01,ags}. 
Given a sequence $\{ u_n\}\subset K$ we consider the pair $(\mu_n,P_n)$, where 
\begin{align*}
		\mu_n:= \es_n|\nabla u_n|\in RM(\R^2)\quad & \mbox{are Radon measures supported on $\Gamma_n$},\\
		P_n \in L^{\infty}(\mu_n;\R^{2 \times 2})\quad &\mbox{are the line fields tangent to $\Gamma_n$}.
\end{align*}
We introduce two notions of convergence for these measure-function pairs. Below $n\in\N$ is a natural number, not necessarily related to the dimension of $\R^2$. 

\begin{defi}\label{def:weak-coup-conv}\textit{(Weak convergence)}. 
Fix $p\in[0,\infty)$. Let $\{\mu_n\}\subset RM(\R^2)$ converge weakly-\star\  to $\mu\in RM(\R^2)$, let $v_n \in L^p(\mu_n;\R^n)$, and let $v\in L^p(\mu;\R^n)$. We say that a pair of functions $(\mu_n,v_n)$  converges weakly in $L^p$ to $(\mu,v)$, and write $(\mu_n,v_n)\weakto (\mu,v)$, whenever
\begin{itemize} 
	 	\item[i)] $\ds \sup_n \int_{\R^2} |v_n(x)|^p \,d\mu_n(x) < +\infty,$
	 	\item[ii)] $\ds \lim_{n \ra \infty} \int_{\R^2} v_n(x)\cdot\eta(x)\,d\mu_n(x) = 
	 		 			\int_{\R^2} v(x)\cdot\eta(x)\,d\mu(x),\quad \forall\,\eta\in C^0_c(\R^2;\R^n).$
\end{itemize}
\end{defi}

\begin{rem}
\label{rem:weak-compactness}
There is a form of weak compactness: any sequence satisfying condition i) above, and for which $\mu_n$ is tight, has a subsequence that converges weakly~\cite{Hutchinson86}.
\end{rem}

\begin{defi}\label{def:strongconv}\textit{(Strong convergence)}. 
Under the same conditions, we say that $(\mu_n,v_n)$ converges strongly in $L^p$ to $(\mu,v)$, and write $(\mu_n,v_n)\to (\mu,v)$,  if 
\begin{itemize} 
	 	\item[i)] $(\mu_n,v_n) \rightharpoonup (\mu,v)$ in the sense of Definition \ref{def:weak-coup-conv},
	 	\item[ii)] $\ds \lim_{n\ra \infty} \int_{\R^2} |v_n(x)|^p\,d\mu_n(x) = 
	 			\int_{\R^2} |v(x)|^p\,d\mu(x).$
\end{itemize}
\end{defi}

\begin{remark}\label{rem:comparison} 
It may be useful to compare the last definition with the definition, introduced by Hutchinson in \cite{Hutchinson86}, of weak-$*$ convergence of the associated graph measures.

In the following let $\{(\mu_n,P_n)\}$, $(\mu,P)$ be measure-function pairs over $\R^2$ with values in $\R^n$, such that $\mu_n \ras \mu$.

\begin{defi}\cite{Hutchinson86}\label{def:graphm}
The \textit{graph measure}  associated with the measure-function pair $(\mu,P)$ is defined by 
$$ [\mu,P]:= (id \times P)_\# \mu \in RM(\R^2 \times \R^{2\times2}),  $$
and the related notion of convergence is the weak-$*$ convergence in $RM(\R^2 \times \R^{2\times2})$.
\end{defi}
%
%
%

Let $\{u_n\}\subset K$ and let $\{(\mu_n,P_n)\}$ be the associated measure-function pairs, as in Subsections \ref{sec:linefields} and \ref{sec:mfpairs}, so that $|P_n|\equiv 1$ and supp$(\mu_n)$ is contained in a compact subset of $\R^2$. Assume that $\mu_n \ras \mu \in RM(\R^2)$. Then, by \cite[Th. 5.4.4, (iii)]{ags} and \cite[Prop. 4.4.1-(ii) and Th. 4.4.2-(iii)]{Hutchinson86}, `strong' convergence in the sense of Definition \ref{def:strongconv} and convergence of the graphs are equivalent. Under these assumptions these concepts are also equivalent to $F$-strong convergence \cite[Def. 4.2.2]{Hutchinson86} in the case $F(x,P):=|P|^2$.  
%
%
\end{remark}

We conclude with a result for weak-strong convergence for measure-function pairs which shows a similar behaviour as in $L^p$ spaces:
\begin{theo}[\cite{Moser01}]\label{theo:weakstrong}
	Let $\mu_n$,$\mu\in RM(\R^2)$, let $P_n,H_n\in L^2(\mu_n)$ and $P,H\in L^2(\mu)$. Suppose that 
	$$ (\mu_n,P_n) \ra (\mu,P)\quad \mbox{strongly in the sense of Definition \ref{def:strongconv}}$$
	and
	$$ (\mu_n, H_n) \weakto (\mu,H)\quad \mbox{weakly in the sense of Definition \ref{def:weak-coup-conv}}.$$
	Then, for the product $P_n\cdot H_n \in L^1(\mu_n)$ we have
	$$ (\mu_n,P_n\cdot H_n) \weakto (\mu,P\cdot H)\quad \mbox{weakly in the sense of Definition \ref{def:weak-coup-conv}}.$$
\end{theo}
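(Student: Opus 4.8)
The plan is to verify the two defining conditions of weak $L^1$ convergence (Definition~\ref{def:weak-coup-conv} with $p=1$) for the pair $(\mu_n,P_n\cdot H_n)$. Condition~i) is immediate from the Cauchy--Schwarz inequality in $L^2(\mu_n)$: one has $\int_{\R^2}|P_n\cdot H_n|\,d\mu_n\le\bigl(\int_{\R^2}|P_n|^2\,d\mu_n\bigr)^{1/2}\bigl(\int_{\R^2}|H_n|^2\,d\mu_n\bigr)^{1/2}$, and both factors are bounded uniformly in $n$ because condition~i) of weak $L^2$ convergence holds for $(\mu_n,P_n)$ (it is part of strong convergence) and for $(\mu_n,H_n)$. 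The content of the theorem is therefore condition~ii): for every fixed $\eta\in C^0_c(\R^2)$,
\begin{equation*}
\int_{\R^2}(P_n\cdot H_n)\,\eta\,d\mu_n\ \longrightarrow\ \int_{\R^2}(P\cdot H)\,\eta\,d\mu .
\end{equation*}
The obstruction is that $P_n\eta$ is not a continuous field, so one cannot directly test the weak convergence $(\mu_n,H_n)\weakto(\mu,H)$ against it; this is exactly the same difficulty as in the $L^p$ weak$\times$strong statement, and I would resolve it the same way, by a continuous approximation of $P$.

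Concretely: fix $\delta>0$ and, using density of $C^0_c(\R^2;\R^n)$ in $L^2(\mu;\R^n)$, choose $g\in C^0_c(\R^2;\R^n)$ with $\int_{\R^2}|P-g|^2\,d\mu$ small. Then split
\begin{equation*}
\int(P_n\cdot H_n)\eta\,d\mu_n-\int(P\cdot H)\eta\,d\mu
=\int(P_n-g)\cdot H_n\,\eta\,d\mu_n+\Bigl[\int g\cdot H_n\,\eta\,d\mu_n-\int g\cdot H\,\eta\,d\mu\Bigr]+\int(g-P)\cdot H\,\eta\,d\mu .
\end{equation*}
The bracketed middle term tends to $0$ as $n\to\infty$ for fixed $g$, since $g\eta\in C^0_c(\R^2;\R^n)$ and $(\mu_n,H_n)\weakto(\mu,H)$. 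The last term is bounded by $\|\eta\|_\infty\bigl(\int|g-P|^2\,d\mu\bigr)^{1/2}\bigl(\int|H|^2\,d\mu\bigr)^{1/2}$, hence small by the choice of $g$. For the first term, Cauchy--Schwarz gives the bound $\|\eta\|_\infty\bigl(\int|P_n-g|^2\,d\mu_n\bigr)^{1/2}\bigl(\int|H_n|^2\,d\mu_n\bigr)^{1/2}$, in which the second factor is uniformly bounded.

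The remaining, and principal, step is the claim $\lim_{n\to\infty}\int_{\R^2}|P_n-g|^2\,d\mu_n=\int_{\R^2}|P-g|^2\,d\mu$ — this is where the full strength of strong convergence of $(\mu_n,P_n)$ is used, and I expect it to be the crux of the argument. I would expand $|P_n-g|^2=|P_n|^2-2\,P_n\cdot g+|g|^2$ and treat the three pieces in turn: $\int|P_n|^2\,d\mu_n\to\int|P|^2\,d\mu$ is precisely condition~ii) of strong convergence; $\int P_n\cdot g\,d\mu_n\to\int P\cdot g\,d\mu$ follows from the weak convergence $(\mu_n,P_n)\weakto(\mu,P)$ because $g\in C^0_c(\R^2;\R^n)$; and $\int|g|^2\,d\mu_n\to\int|g|^2\,d\mu$ is just the weak-$\star$ convergence $\mu_n\ras\mu$ tested against the continuous compactly supported function $|g|^2$. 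Combining, $\limsup_n$ of the first term is at most $\|\eta\|_\infty C\bigl(\int|P-g|^2\,d\mu\bigr)^{1/2}$, which is small. Thus the total $\limsup_n$ of the difference is bounded by a quantity that goes to $0$ as $\delta\to0$, and since the left-hand side does not depend on $\delta$ this gives condition~ii). The rest is bookkeeping with Cauchy--Schwarz; the only genuinely delicate point is recognizing that strong (as opposed to merely weak) convergence of the measure-function pair is exactly what upgrades $\int P_n\cdot g\,d\mu_n\to\int P\cdot g\,d\mu$ to the $L^2(\mu_n)$-norm estimate that lets one absorb the non-continuous factor $P_n$.
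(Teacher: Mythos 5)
Your argument is correct and complete; note that the paper itself does not prove this statement but cites it to Moser \cite{Moser01}, and what you have written is precisely the standard proof of that weak--strong convergence result: approximate $P$ in $L^2(\mu)$ by $g\in C^0_c$, split the error into three terms, and use the identity $\lim_n\int|P_n-g|^2\,d\mu_n=\int|P-g|^2\,d\mu$ (which follows by expanding the square and invoking, respectively, condition~ii) of strong convergence, weak convergence tested against $g$, and $\mu_n\ras\mu$ tested against $|g|^2$) to absorb the non-continuous factor. All steps check out, including the uniform bound on $\|H_n\|_{L^2(\mu_n)}$ from condition~i) of weak convergence, so nothing is missing.
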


\bigskip


\section{Proofs of weak compactness and lower bound}\label{sec:lowerbound}

Although the statement of Theorem~\ref{thetheorem} refers explicitly to sequences $\e_n\to0$, we shall alleviate notation in the rest of the paper and consistently write $\e$ instead of $\e_n$, and $u_\e$, $\mu_\e$, and $P_\e$, instead of their counterparts $u_n$, $\mu_n$, and $P_n$; and when possible, we will even drop the index $\e$.

\subsection{Overview}
In this section, Section~\ref{sec:lowerbound}, we show that if $\G_\e(u_\e)$ is bounded independently of $\e$, then we can choose a subsequence along which the function $u_\e$ and the measure-projection pairs $(\mu_\e,P_\e)$ converge weakly. Recall that this pair is defined by (see Section~\ref{sec:linefields})
\[
\mu_\e:= \e|\nabla \ue|
\qquad\text{and}\qquad
P_\e  = \frac{\nabla u_\e^\perp}{|\nabla u_\e|}\otimes \frac{\nabla u_\e^\perp}{|\nabla u_\e|}.
\]
A corollary of this convergence is the lower bound~\pref{ineq:lowerboundintro}. 
The results of this Section~\ref{sec:lowerbound} thus provide the first half of part~\ref{thetheorem:part1} and the whole of part~\ref{thetheorem:part2} of Theorem \ref{thetheorem}.

\medskip
The argument starts by using the parametrization by rays that was mentioned in the introduction to bound certain geometric quantities in terms of the energy $\G_\e(u_\e)$ (Proposition~\ref{prop:inequality}). Using this inequality we then prove that (Lemma~\ref{lemmaconvergence})
\[
\ue\stackrel\ast\rightharpoonup \frac12 \quad\text{in }L^\infty(\o) 
\qquad\text{and}\qquad 
\mue := \e|\nabla \ue| \stackrel*\rightharpoonup \mu:= \frac12 \L^2\llcorner\Omega\quad \mbox{in }RM(\R^2).
\]
This result should be seen as a form of equidistribution: both the stripes and the interfaces separating the stripes become uniformly spaced in $\Omega$. 

From the $L^\infty$-boundedness of $P_\e$ it follows (Lemma~\ref{lemmaconvergence}) that along a subsequence
\[
		(\mu_\e,P_\e) \rightharpoonup (\mu,P) \text{ in }L^p,\qquad\text{for all $1\leq p<\infty$},
\]
and therefore 
$\div(P_\e\mu_\e)$ converges in the sense of distributions on $\R^2$. In Lemmas~\ref{bigestimate} and~\ref{lemma:divint} we use the estimate of Proposition~\ref{prop:inequality} to show that the limit of $\div(P_\e\mu_\e)$ equals a function $-H\in L^2(\R^2;\R^2)$ supported on $\Omega$, i.e. that
\[
\lim_{\es \ra 0}\int_{\R^2} P_\es(x): \nabla\eta(x)\,d\mue(x) = \frac12 \int_\o H(x)\cdot \eta(x)\,dx,\h 
		\forall\,\eta \in C^0_c(\R^2;\R^2).
\]
From this weak convergence we then deduce in Lemma~\ref{lemma:divint} the lower bound
\[
\liminf_{\es \ra 0}\, \G_\es(\ue) \geq \frac 18 \int_\o \big|\div P(x)\big|^2 dx.
\]
For the proof of part~\ref{thetheorem:part1} of Theorem \ref{thetheorem} it remains to prove that $(\mu_\e,P_\e)$ converges strongly; this is done in Section~\ref{sec:strongconvergence}. 

\medskip

\subsection{Regularization of the interfaces}
Before we set out we first show that we can restrict ourselves to a class of more regular functions. 
\begin{lemma}\label{approxi}
It is sufficient to prove parts~\ref{thetheorem:part1} and~\ref{thetheorem:part2} of Theorem \ref{thetheorem} under the additional assumption that $\gme$ is parametrizable as a finite 
family of 
simple, smooth curves 
$$\gamma_{\es,j} : [0,L_{\es,j}]\ra \o,\quad j=1,\ldots,J_\es,  $$
for some $J_\es \in \N$, with $L_{\es,j}\leq 1$ for all $\es,j$ and
$$ \gamma_{\es,j}\bigl((0,L_{\es,j})\bigr)\cap \gamma_{\es,i}\bigl((0,L_{\es,i})\bigr)=\emptyset\quad \mbox{if }\ \ i \neq j.$$ 
Moreover, there exists a permutation $\sigma_\e$ on the numbers $\{1,\dots,J_\e\}$ such that for all $j=1,\dots,J_\e$,
\begin{equation}\label{concatenation}
	\gamma_{\e,j}(L_{\es,j})= \gamma_{\e,\sigma_\e(j)}(0)\quad \mbox{and}\quad \gamma_{\e,j}'(L_{\es,j}^-)= \gamma_{\e,\sigma_\e(j)}'(0^+).
\end{equation}

\end{lemma}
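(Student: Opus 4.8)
The plan is to show that any sequence $u_\es$ with bounded energy $\G_\es(u_\es)$ can be approximated, both in the relevant convergence and in the energy, by functions whose jump set has the stated structure. Since $\G_\es$ is bounded, the initial estimate of Proposition~\ref{prop:inequality} already forces $\F_\es(u_\es) - |\o| = O(\es^2)$, and in particular $\es \int_\o |\nabla u_\es| = \mu_\es(\o) \to |\o|/2$ is bounded. First I would invoke a standard approximation result for sets of finite perimeter: given $u \in BV(\o;\{0,1\})$, there is a sequence of smooth open sets $A_k$ with $\partial A_k$ of class $C^\infty$, $\chi_{A_k} \to u$ in $L^1$, $|\nabla \chi_{A_k}|(\o) \to |\nabla u|(\o)$, and moreover one can arrange the boundaries $\partial A_k$ to be compactly contained in $\o$ so that the boundary condition $u = 0$ on $\partial \o$ is respected, and the volume constraint $\dashint u = 1/2$ can be restored by an arbitrarily small perturbation. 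A smooth compact $1$-manifold in $\R^2$ is a finite disjoint union of simple closed $C^\infty$ curves; parametrizing each by arclength and then cutting each closed curve into finitely many arcs of length $\le 1$ at regularly spaced points gives the family $\gamma_{\es,j}$, and the concatenation property~\pref{concatenation} holds by construction because consecutive arcs of the same closed curve meet $C^1$-smoothly — the permutation $\sigma_\es$ just records "which arc comes next along the same closed curve," cyclically. The non-intersection condition $\gamma_{\es,j}((0,L_{\es,j})) \cap \gamma_{\es,i}((0,L_{\es,i})) = \emptyset$ for $i \ne j$ is exactly disjointness of the open arcs, which is inherited from the fact that $\partial A_k$ is an embedded manifold.

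The substantive point is that this regularization is compatible with $\G_\es$, i.e.\ that one can make the approximation inside a diagonal argument without increasing the $\limsup$ of the energy or destroying the convergence statements. I would argue as follows: $\G_\es$ is a sum of the perimeter term $\es^{-1}\int_\o |\nabla u|$ and the transport term $\es^{-3} d(u,1-u)$ (after subtracting $|\o|$ and dividing by $\es^2$). The perimeter term is continuous under the chosen approximation by the strict convergence $|\nabla \chi_{A_k}| (\o)\to |\nabla u|(\o)$. For the transport term, $u \mapsto d(u,1-u)$ is continuous with respect to $L^1$ convergence (with the mass balance maintained): indeed $d_1$ is, by its dual formulation~\pref{eq:duality} as a supremum of $1$-Lipschitz linear functionals, $1$-Lipschitz with respect to the $L^1$ norm on the densities, so $|d(u,1-u) - d(v,1-v)| \le 2\|u-v\|_{L^1}$. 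Hence for each $\es$ we may pick $k = k(\es)$ so that the approximant $\tilde u_\es$ satisfies $|\G_\es(\tilde u_\es) - \G_\es(u_\es)| < \es$ (say), which preserves $\limsup_n \G_{\es_n}(\tilde u_n) \le \limsup_n \G_{\es_n}(u_n)$ and hence boundedness; and since $\|\tilde u_\es - u_\es\|_{L^1} \to 0$ and $\mu_{\tilde u_\es} - \mu_{u_\es} \to 0$ in mass, the two sequences have the same weak limits for $u_\es \ras 1/2$, for $\mu_\es \ras \frac12 \L^2\llcorner\o$, and — once strong convergence of $(\mu_\es, P_\es)$ is established for the regularized sequence in Sections~\ref{sec:strongconvergence} — the same limit pair $(\mu,P)$; the final conclusion $P \in \mathcal K_0(\o)$ and the lower bound~\pref{ineq:lowerboundintro} then transfer back verbatim. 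This is where one must be a little careful to phrase the reduction correctly: it is a reduction "it is \emph{sufficient} to prove," so what is needed is precisely that (a) every bounded-energy sequence is approximated by a structured one with no larger energy $\limsup$ and the same limits, and (b) the \emph{conclusions} of parts~\ref{thetheorem:part1}–\ref{thetheorem:part2} are stable under this approximation, both of which follow from the continuity estimates above.

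The main obstacle I anticipate is the bookkeeping around the boundary condition and the volume constraint simultaneously: the standard smooth-approximation theorems for finite-perimeter sets do not automatically respect $u = 0$ on $\partial\o$ together with $\dashint_\o u = 1/2$. I would handle this by first truncating/retracting the approximating sets slightly inward (using that $\partial\o$ is $C^2$, a collar neighbourhood exists) to guarantee $\spt \tilde u_\es \Subset \o$, which costs only an arbitrarily small amount of perimeter and $L^1$ mass, and then restoring the exact volume $1/2$ by adding or removing a small smooth ball in a fixed interior region — again an $L^1$- and perimeter-small modification that keeps the jump set a finite union of smooth simple curves. A secondary nuisance is verifying that the arclengths can indeed be taken $\le 1$ and the $C^1$-concatenation~\pref{concatenation} holds at the cut points: this is immediate since we cut a single smooth closed curve, so the one-sided derivatives at a cut agree, but it should be stated. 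None of these steps require the estimate of Proposition~\ref{prop:inequality} beyond the crude bound $\mu_\es(\o) = O(1)$; the real analytic content is entirely in the $L^1$-continuity of $d_1$ and the strict approximation of perimeter.
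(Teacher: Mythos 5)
Your overall strategy is the one the paper uses: approximate $\supp u_\es$ by smooth open sets via the standard BV approximation theorem, adjust the volume, exploit the $L^1$-Lipschitz continuity of $d_1$ (via the dual formulation) to control the transport term, cut the resulting closed curves into arcs of length at most $1$ to obtain the family $\{\gamma_{\es,j}\}$ and the cyclic permutation $\sigma_\es$, and run a diagonal argument. That part is sound and matches the paper.

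There is, however, one step you assert rather than prove, and it is precisely the step the paper spends most of its effort on. You claim that because $\|\tilde u_\es-u_\es\|_{L^1}\to0$ and the masses of the measures are close, the approximating sequence has ``the same limit pair $(\mu,P)$'' and the strong-convergence conclusion ``transfers back verbatim.'' Closeness of $u$ in $L^1$ and of the total masses $\mu_\es(\o)$ does not by itself control the tangent projections: you need that the approximants are close at the level of the pairings $\int\vfi:P\,d\mu$ and of $\int|P|^2\,d\mu$, i.e.\ close as measure-function pairs. The paper obtains this from the Reshetnyak continuity theorem: since the standard approximation gives $\nabla u_k\ras\nabla u$ \emph{together with} $|\nabla u_k|(\o)\to|\nabla u|(\o)$, one has $\int_\o f(x,\nu_k)\,d|\nabla u_k|\to\int_\o f(x,\nu)\,d|\nabla u|$ for continuous bounded $f$, whence $(\mu_k,P_k)\to(\mu,P)$ strongly in the sense of Definition~\ref{def:strongconv}. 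The paper then metrizes weak-$*$ convergence of the graph measures $[\mu,P]$ and chooses the diagonal sequence so that $d([\tilde\mu_n,\tilde P_n],[\mu_n,P_n])<1/n$ and the $L^2$-norms differ by less than $1/n$; the transfer of both the identification of the limit and the strong convergence then follows by two triangle inequalities. Your write-up needs this ingredient (or an equivalent) to be complete; note also that the phrase ``$\mu_{\tilde u_\es}-\mu_{u_\es}\to0$ in mass'' is false if read as total-variation convergence of the difference --- only weak-$*$ closeness plus convergence of the masses holds, which is exactly why the graph-measure metric is needed.
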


\begin{proof} 
Let $u\in K$ be fixed for the moment. Since $\supp u$ has finite perimeter, by standard approximation results (see \cite[Sect. 5.2]{EvansGariepy92} or \cite[Theorem 3.42]{afp}) there exists a sequence $\{E_k\}$ of open subsets of~$\Omega$ with smooth boundary such that the characteristic functions $u_k:=\chi_{E_k}$ satisfy
\begin{equation}\label{arr:approx}
\begin{array}{rll}
		i)	&	u_k \ra u								& \mbox{strongly in }L^1(\o),\\
		ii) & \nabla u_k \ras \nabla u & \mbox{weakly-$*$ in }RM(\o),\\
		iii)& |\nabla u_k|(\o) \ra |\nabla u|(\o) .
\end{array}
\end{equation}
By a small dilation we can furthermore adjust the total mass so that $\int_{\R^2}u_k = |\o|/2$. By the $L^1$-continuity of the metric $d$ we obtain that for fixed $\es>0$
$$ \F_\es(u_k) \ra \F_\es(u)\quad \mbox{as }k\ra \infty.$$

Along this sequence, the corresponding measure-function pair $(\mu_k,P_k)$ converges strongly. Indeed, writing $\nu_k = d\nabla u_k/d|\nabla u_k|$ and $\nu = d\nabla u/d|\nabla u|$, the Reshetnyak Continuity Theorem (see \cite[Th. 2.39]{afp}) implies that
\begin{equation}\label{resh}
		\lim_{k \ra \infty} \int_\o f(x,\nu_k(x))d|\nabla u_k|(x) = \int_\o f(x,\nu(x))d|\nabla u|(x),
\end{equation}
for every continuous and bounded function $f :\o \times S^1 \ra \R$. Therefore, since $P = \nu^\perp\otimes\nu^\perp$ and $P_k=\nu_k^\perp\otimes\nu_k^\perp$, 
\begin{equation}\label{eq:weakapprox}
		\lim_{k \ra \infty}  \int_\o \vfi(x):P_k(x)\,d\mu_k(x) = \int_\o \vfi(x):P(x)\,d\mu(x),
\end{equation}
for every $\vfi\in C^0(\o;\R^{2 \times 2})$, and 
\begin{equation}\label{eq:strongapprox}
	\lim_{k \ra \infty}  \int_\o |P_k(x)|^2\,d\mu_k(x) = \int_\o |P(x)|^2\,d\mu(x).
\end{equation}


We turn now to part~\ref{thetheorem:part1} of Theorem \ref{thetheorem}. 
Let us assume that Theorem \ref{thetheorem}.\ref{thetheorem:part1} holds under the additional assumption of Lemma \ref{approxi}. Let $\{u_n\}\in K$, and by Remark~\ref{rem:weak-compactness} we can assume that the related sequence of measure-projection pairs satisfies 
\begin{equation}\label{eq:nthweak}
	(\mu_n,P_n) \weakto (\mu,P) \mbox{ in the sense of Definition \ref{def:weak-coup-conv}}.
\end{equation}
We want to prove that, after extraction of a subsequence,
\begin{equation}\label{eq:nthstrong}
	(\mu_n,P_n) \ra (\mu,P) \mbox{ in the sense of Definition \ref{def:strongconv}}.
\end{equation}
Recall that the strong convergence of a sequence $\{(\mu_k,P_k)\}$ of measure-function pairs is equivalent to the weak-* convergence of the graph measures $[\mu_k,P_k]\in RM(\R^2 \times \R^{2\times 2})$ (see Remark \ref{rem:comparison} above and Section \ref{sec:strongtop} below), by (\ref{arr:approx}), (\ref{eq:weakapprox}), and \pref{eq:strongapprox}.

Let $d$ be a metric on $RM(\R^2 \times \R^{2\times 2})$, inducing the weak-* convergence on bounded sets  
and such that
\begin{equation}\label{yip}
	\left| \int_{\R^2} \vfi(x):P(x)\,d\mu(x) -\int_{\R^2} \vfi(x):Q(x)\,d\nu(x) \right| \leq C\|\vfi\|_{C^1}d([\mu,P],[\nu,Q]),
\end{equation}
for all $\vfi \in C^1_c(\R^2;\R^{2 \times 2})$ (see e.g. \cite[Def. 2.1.3]{Yip98}). 
By the arguments above, we can find a bounded set $U$ such that for every $n\in \N$ there exists an open set $\tilde{E}_n\subset \subset U$, with smooth boundary, such that the characteristic function $\tilde{u}_n:=\chi_{\tilde{E}_n}$ and the associated $\tilde \mu_n$ and $\tilde P_n$ satisfy
\begin{eqnarray}
			&\ds	 d([\tilde{\mu}_n,\tilde{P}_n],[\mu_n,P_n])<\frac1n,								&  \label{eq:dist1n}\\
		 	& \ds  \left|\int_{\R^2} |\tilde{P}_n(x)|^2\,d\tilde{\mu}_n(x) - \int_{\R^2} |P_n(x)|^2\,d\mu_n(x)\right| < \frac 1n. \label{eq:dist2n}& 
\end{eqnarray}

Owing to Theorem \ref{thetheorem}.\ref{thetheorem:part1} there exists a couple $(\tilde\mu,\tilde{P})$ and a subsequence, still denoted $\{\tilde{u}_n\}$, such that $(\tilde\mu_n,\tilde{P}_n) \ra (\tilde\mu,\tilde{P})$ strongly, in the sense of Definition \ref{def:strongconv}. On the other hand, $(\tilde\mu_n,\tilde{P}_n) \weakto (\mu,P)$, since
for any $\vfi\in C^0_c(\R^2;\R^{2\times 2})$,
\begin{multline*}
\left|\int_{\R^2} \vfi:\tilde P_n\,d\tilde \mu_n - \int_{\R^2} \vfi:P\,d\mu\right|
\leq \left| \int_{\R^2} \vfi:\tilde P_n \,d\tilde \mu_n - \int_{\R^2} \vfi :P_n \,d\mu_n \right|\\
+ \left|\int_{\R^2} \vfi :P_n \,d \mu_n - \int_{\R^2} \vfi : P \,d \mu\right|,
\end{multline*}
and the first converges to zero by~(\ref{eq:dist1n}), and the second by~(\ref{eq:nthweak}).
Therefore $(\tilde\mu,\tilde{P})=(\mu,P)$. In addition,
\begin{multline*}
	\left|\int_{\R^2} |P_n|^2\,d\mu_n - \int_{\R^2} |P|^2\,d\mu\right|\leq 
	\left|\int_{\R^2} |P_n|^2\,d\mu_n - \int_{\R^2} |\tilde{P}_n|^2\,d\tilde\mu_n\right| \\
	 +\left|\int_{\R^2} |\tilde{P}_n|^2\,d\tilde\mu_n- \int_{\R^2} |\tilde{P}|^2\,d\tilde\mu\right|. 
\end{multline*}	
Passing to the limit as $n\ra\infty$, by \pref{eq:dist1n} and~\pref{eq:dist2n} we obtain (\ref{eq:nthstrong}).

For part~\ref{thetheorem:part2} of Theorem~\ref{thetheorem} the argument is similar, but simpler, and we omit it. The existence of the permutation $\sigma_\e$ follows by cutting the smooth boundary of $\tilde E_n$ into sections of length no more than $1$.
\end{proof}


\subsection{Parametrization by rays, mass coordinates, and a fundamental estimate}
\label{sec:rays}

The central estimate (\ref{ineq:inequality}) below is derived in \cite{PeletierRoegerTA} in a very similar case. It follows from an explicit expression of the Monge-Kantorovich distance $d(u,1-u)$ obtained by a convenient parametrization of the domain in terms of the transport rays. Here we recall the basic definitions and we state the main result, Proposition~\ref{prop:inequality}, referring to \cite{PeletierRoegerTA} for further details and proofs. 

Let $\phi\in\mbox{Lip}_1(\R^2)$ be an optimal Kantorovich potential for the mass transport from $u$ to $1-u$ as in Lemma \ref{lemma:add-prop}, with $\mathcal{T}$ being the set of transport rays as in Definition~\ref{def:rays}. Recall that $\phi$ is differentiable, with $|\nabla \phi|=1$, in the relative interior of any ray. We define several quantities that relate the structure of the support of $u$ to the optimal Kantorovich potential $\phi$. Finally we define a parametrization of $\o$.
\begin{defi}
\label{def:rayquantities}
		For $\g\in \Gamma$, defined on the set $[0,L]$, we define
		\begin{itemize}
			\item[1)] a set $E$ of interface points that lie in the relative interior of a ray,
				$$ E:=\{s\in [0,L] : \g(s)\in \mathcal{T}\},$$
			\item[2)] a direction field
				$$ \theta: E \ra \mathcal{S}^1,\h \theta(s):=\nabla\phi(\g(s)),$$	
				\item[3)]	the positive and negative total ray length $\ell^+,\ell^-:E \ra \R$,
						\begin{align}
							\ell^+(s) &:= \sup\,\{t>0 :\phi\bigl(\g(s)+t\theta(s)\bigr)-\phi(\g(s))=t\},\\
							\ell^-(s) &:= \inf\,\{t<0 :\phi\bigl(\g(s)+t\theta(s)\bigr)-\phi(\g(s))=t\},
						\end{align}		
				\item[4)] the effective positive ray length $l^+:E \ra \R$,			
						\begin{equation*}
							l^+(s):=\sup\{t\geq0:\g(s)+\tau\theta(s)\in \mbox{Int(supp($u$)) for all }0<\tau<t\}
						\end{equation*}
				(with the convention $l^+(s) = 0$ if the set above is empty).
		\end{itemize}		
\end{defi}
\begin{rem}
\label{rem:dropping-indices}
		All objects defined above are properties of $\g$ even if we do not denote this dependence explicitly. When dealing with a collection of curves  $\{\g_j:j=1,\ldots,J\}$ or $\{\g_{\es,j}:\es>0,j=1,\ldots,J_\e\}$, then $E_j$, $\theta_{\es,j}$ etc. refer to the objects defined for the corresponding curves. 
\end{rem}
\begin{defi}
\label{def:alphabeta}
		Define two functions $\alpha,\beta: E \ra (\R \mod 2\pi)$ by requiring that 
		\begin{equation*}
				\theta(s)=\left( \begin{array}{c}
														\cos \alpha(s)\\		
														\sin \alpha(s)
													\end{array}
									\right),\quad
				\det \left(\g'(s),\theta(s)\right) = \sin \beta(s).					
		\end{equation*}													
\end{defi}
In the following computations it will often be more convenient to employ \textit{mass coordinates} instead of  \textit{length coordinates}:
\begin{defi}\label{defi:coordm}
		For $\g\in\Gamma$ and $s\in E$ we define a map $\mp_s:E \ra \R$ and a map $M: E \ra \R$ by
\begin{align}
  \mp(s,t)	& := \left\{ \begin{array}{ll}
	  t \sin \beta(s) - \frac{t^2}{2} \alpha'(s) & \mbox{if }l^+(s)>0,\\
	  0	& \mbox{otherwise}.
    \end{array} \right.\\
  M(s)			& := \mp(s,l^+(s)).
\end{align}																						
\end{defi}
\begin{figure}[htbp]
  \begin{center}
    \psfrag{u0}[c][]{$u=0$}		
    \psfrag{u1}[c][]{$u=1$}
    \psfrag{g}[c][]{$\gamma$}
    \psfrag{gs}[c][]{$\gamma(s)$}
    \psfrag{mt}[c][]{$\mp(s,t)$}
    \psfrag{mt1}[c][]{$\mp(s,t_1)$}
    \psfrag{gpt}[c][]{$\g(s)+t\theta(s)$}
    \psfrag{gmt}[c][]{$\g(s)-t\theta(s)$}
    \psfrag{gmt2}[c][]{$\g(s)-t_2\theta(s)$}
    \psfrag{gpt1}[c][]{$\g(s)+t_1\theta(s)$}
		\includegraphics[scale=1]{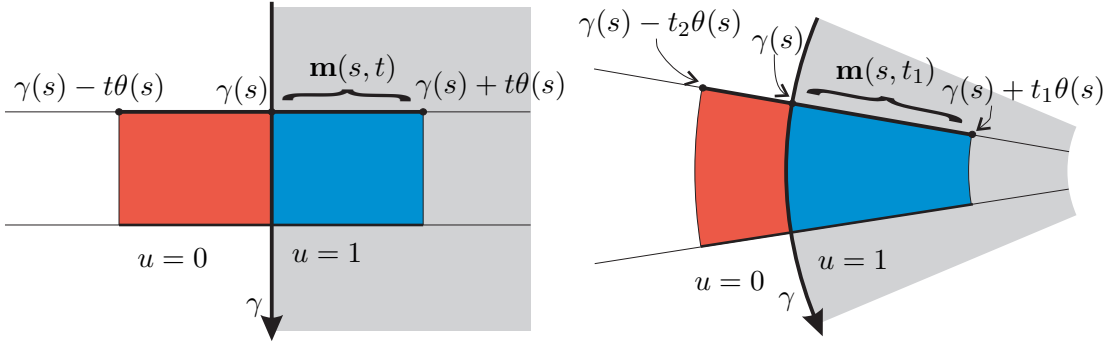}
		\caption{Mass coordinates. In the picture, a bending of the interface $\g$ produces a 
		stretching ($t\nearrow t_1$) of the transport ray in $\{u=1\}$ and a shrinking 
		($t\searrow t_2$) in $\{u=0\}$. $\mp(s,t_1)$ represents the amount of mass lying on the segment 
		stretching from $\g(s)$ to $\g(s)+t_1\theta(s)$, accounting for the change of density due to the stretching. The dimensions are exaggerated for clarity.}
		\end{center}
\end{figure}
Introducing the inverse of $\mp$ we can formulate a change of variables between length and mass coordinates:
\begin{prop}[\cite{PeletierRoegerTA}]\label{prop:coordt}
	The map $\mp(s,\cdot)$ is strictly monotonic on $(\ell^-(s),\ell^+(s))$ with inverse 
\begin{equation}
\label{def:tra}
\tra(s,m):=\frac{\sin \beta(s)}{\alpha'(s)}
	\left[ 1- \left( 1- \frac{2\alpha'(s)}{\sin^2 \beta(s)}m\right)^{\frac12}\right].
\end{equation}
\end{prop}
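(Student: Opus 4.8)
The plan is to isolate the one non-elementary ingredient (positivity of the transport density along a ray, which comes from \cite{PeletierRoegerTA}) and then reduce everything else to the inversion of a quadratic. Fix $s\in E$; if $l^+(s)=0$ then $\mp(s,\cdot)\equiv 0$ and there is nothing to prove, so assume $l^+(s)>0$ and abbreviate $a:=\sin\beta(s)$, $b:=\alpha'(s)$, so that $\mp(s,t)=at-\tfrac{b}{2}t^2$ is a quadratic in $t$ with $\partial_t\mp(s,t)=a-bt$.

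First I would record the geometric meaning of $\partial_t\mp$: by the construction in \cite{PeletierRoegerTA}, the function $\tau\mapsto a-b\tau$ is (proportional to) the transport density along the ray through $\gamma(s)$ in direction $\theta(s)$, i.e. the local area–stretch Jacobian of the optimal transport, and $\mp(s,\cdot)$ is its antiderivative with $\mp(s,0)=0$. Since $\gamma(s)$ lies in the relative interior of its ray (this is the meaning of $s\in E$), one has $\ell^-(s)<0<\ell^+(s)$, and the density is strictly positive — more precisely $a$ and $a-b\tau$ have the same nonzero sign — throughout $(\ell^-(s),\ell^+(s))$, since it cannot vanish inside a ray without the transport degenerating. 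Hence $\mp(s,\cdot)$ is $C^1$ with derivative of constant sign on $(\ell^-(s),\ell^+(s))$, so it is strictly monotone there, and in particular invertible.

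Second, for the explicit inverse I would verify directly that $\tra\bigl(s,\mp(s,t)\bigr)=t$ rather than solve the quadratic, using the perfect-square identity
\[
1-\frac{2b}{a^2}\,\mp(s,t)=1-\frac{2b}{a^2}\Bigl(at-\frac{b}{2}t^2\Bigr)=\frac{a^2-2abt+b^2t^2}{a^2}=\Bigl(1-\frac{bt}{a}\Bigr)^2 .
\]
Because $a$ and $a-bt$ share the same sign on the ray, the relevant branch of the square root is $\bigl(1-\tfrac{2b}{a^2}\mp(s,t)\bigr)^{1/2}=1-\tfrac{bt}{a}$, and therefore
\[
\frac{a}{b}\Bigl[\,1-\bigl(1-\tfrac{2b}{a^2}\,\mp(s,t)\bigr)^{1/2}\,\Bigr]=\frac{a}{b}\cdot\frac{bt}{a}=t,
\]
which is exactly $\tra\bigl(s,\mp(s,t)\bigr)=t$. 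This identifies $\tra(s,\cdot)$ with the inverse of $\mp(s,\cdot)$ on its range, and as a by-product shows that the radicand in \pref{def:tra} is nonnegative for every admissible $m$.

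Finally, the degenerate case $\alpha'(s)=0$ is handled separately: then $\mp(s,t)=t\sin\beta(s)$ is linear with nonzero slope, hence strictly monotone with inverse $m\mapsto m/\sin\beta(s)$, which is precisely the first-order expansion of the right-hand side of \pref{def:tra} as $\alpha'(s)\to0$, so the formula is read in this limiting sense. The only step that genuinely requires work — and the only one I would defer to \cite{PeletierRoegerTA} — is the positivity (constant sign) of the density $a-b\tau$ on the interior of each ray; once that is granted, strict monotonicity and the inversion formula are elementary.
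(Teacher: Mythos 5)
Your proposal is correct, and in fact the paper offers no proof of this proposition at all: it is imported verbatim from \cite{PeletierRoegerTA}, to which Section~\ref{sec:rays} explicitly defers. Your argument --- the perfect-square identity $1-\tfrac{2\alpha'}{\sin^2\beta}\mp(s,t)=\bigl(1-\tfrac{\alpha' t}{\sin\beta}\bigr)^2$ together with the non-vanishing of the ray Jacobian $\sin\beta(s)-t\alpha'(s)$ on the open ray (a consequence of the non-crossing of transport rays, Lemma~\ref{prop:cfm-rays}) --- is the standard one and is exactly what the cited reference does; the only caveat is that when $l^+(s)=0$ the map $\mp(s,\cdot)$ is identically zero, so the statement must be read as tacitly restricted to $l^+(s)>0$, as you note.
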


Going back to the full set of curves $\Gamma = \{\gamma_{j}\}$ we have the following parameterization result:
\begin{prop}
Let $\Gamma$ be given as in Lemma \ref{approxi}. For any $g\in L^1(\Omega)$ we have
\begin{align}
\label{eq:change-of-variables1}
\int g(x) u_(x) dx &= \sum_j \int_0^{L_{j}} 
	\int_0^{M_{j}(s)} g(\gamma_{j}(s)+\tra_{j}(s,m)\theta_{j}(s))\, dm\,ds,\\
\int g(x) (1-u(x)) dx &= \sum_j \int_0^{L_{j}} 
	\int_{-M_{j}(s)}^0 g(\gamma_{j}(s)+\tra_{j}(s,m)\theta_{j}(s))\, dm\,ds.
	\notag
\end{align}
\end{prop}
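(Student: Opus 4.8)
The identity is a change-of-variables formula, so the plan is to establish it first for a single smooth curve $\gamma_j$ and then sum. Fix one index $j$ and drop it. By Definition~\ref{def:rays} and Lemma~\ref{prop:cfm-rays}, the transport rays foliate $\mathcal T$, and two distinct rays meet only at endpoints, which form a set of measure zero. Along the piece of $\Gamma$ parametrized by $\gamma(s)$, $s\in E$, each point $\gamma(s)$ lies in the relative interior of a unique ray with direction $\theta(s)=\nabla\phi(\gamma(s))$, and the portion of that ray lying in $\mathrm{Int}(\mathrm{supp}\,u)$ on the positive side has length $l^+(s)$. The map
\[
(s,t)\ \longmapsto\ \gamma(s)+t\,\theta(s),\qquad s\in E,\ 0<t<l^+(s),
\]
is therefore a bijection (up to null sets) onto $\mathrm{supp}\,u\cap\mathcal T$, and since $\mathcal E$ is $\L^2$-null (Lemma~\ref{prop:cfm-rays}) and $\mathrm{supp}(1-u)$ is transported onto $\mathrm{supp}\,u$ along these same rays, the complement $\mathrm{supp}\,u\setminus\mathcal T$ is also negligible. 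Hence $\int g\,u\,dx$ can be rewritten as an integral over $\{(s,t):s\in E,\ 0<t<l^+(s)\}$ against the appropriate Jacobian.

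The next step is to compute that Jacobian and to reconcile it with the mass coordinate $\mp(s,t)$. Differentiating $x(s,t)=\gamma(s)+t\,\theta(s)$ gives $\partial_t x=\theta(s)$ and $\partial_s x=\gamma'(s)+t\,\theta'(s)$. Since $|\theta|=1$ we have $\theta'(s)\perp\theta(s)$, and with $\theta'(s)=\alpha'(s)\,\theta(s)^\perp$ one finds
\[
\bigl|\det(\partial_s x,\partial_t x)\bigr|
=\bigl|\det(\gamma'(s)+t\alpha'(s)\theta(s)^\perp,\ \theta(s))\bigr|
=\bigl|\sin\beta(s)-t\,\alpha'(s)\bigr|,
\]
using $\det(\gamma'(s),\theta(s))=\sin\beta(s)$ from Definition~\ref{def:alphabeta} and $\det(\theta(s)^\perp,\theta(s))=-1$. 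This is exactly $\partial_t\mp(s,t)$ from Definition~\ref{defi:coordm}, which is positive on $(\ell^-(s),\ell^+(s))$ by Proposition~\ref{prop:coordt}. Therefore the area element along the ray is $d\mp$, and changing the inner variable from $t$ to $m=\mp(s,t)$ — whose inverse is $t=\tra(s,m)$ by Proposition~\ref{prop:coordt}, with $m$ running from $0$ to $\mp(s,l^+(s))=M(s)$ — turns $\int_0^{l^+(s)} g(\gamma(s)+t\theta(s))\,|\partial_t\mp|\,dt$ into $\int_0^{M(s)} g(\gamma(s)+\tra(s,m)\theta(s))\,dm$, which is the claimed $j$-th summand. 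The argument for $1-u$ is identical but uses the negative side of each ray, $-l^-(s)<t<0$ (where we interpret the relevant effective length on the $\{u=0\}$ side), giving $m\in(-M(s),0)$; here the non-intersection of rays is again what guarantees injectivity.

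The main obstacle is the rigorous bookkeeping of the "up to null sets" statements and the fact that $l^+(s)$, not $\ell^+(s)$, is the correct upper limit: one must verify that the part of $\mathrm{supp}\,u$ not covered by $\{\gamma(s)+t\theta(s):s\in E,\,0<t<l^+(s)\}$ has zero Lebesgue measure. This requires combining (i) $\L^2(\mathcal E)=0$, (ii) the fact that $\mathrm{supp}(1-u)\cap\mathcal T$ is covered by the negative halves of the rays and is mapped bijectively onto a subset of $\mathrm{supp}\,u$ by the optimal map $S$, so that the "overshoot" beyond $l^+(s)$ on the positive side coincides (modulo null sets) with $\mathrm{supp}(1-u)$, and (iii) that $\mathcal T$ has full measure in $\Omega$ because its complement consists of endpoints. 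All of these are contained in the results of \cite{PeletierRoegerTA,CFC} already cited, so the proof is essentially a careful assembly of Proposition~\ref{prop:coordt} with the Jacobian computation above; we refer to \cite{PeletierRoegerTA} for the details of the measure-theoretic covering argument.
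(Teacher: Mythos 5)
The paper itself contains no proof of this proposition: all of Section~3.3 is recalled from \cite{PeletierRoegerTA}, so there is no in-paper argument to compare yours against and I judge the proposal on its own terms. For the \emph{first} identity your argument is the right one and essentially complete: the transport rays cover $\supp u$ up to an $\L^2$-null set, assigning to $x$ the first interface crossing met from $x$ in direction $-\theta$ makes $(s,t)\mapsto\gamma_j(s)+t\theta_j(s)$, $0<t<l_j^+(s)$, injective by the non-intersection of rays, and your computation $|\det(\partial_s x,\partial_t x)|=|\sin\beta(s)-t\,\alpha'(s)|=\partial_t\mp(s,t)$ correctly identifies the area element, so the substitution $m=\mp(s,t)$, $t=\tra(s,m)$, $0<m<M_j(s)$ yields the first formula (the measure-theoretic bookkeeping you defer is indeed standard).

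The \emph{second} identity is where there is a genuine gap. Covering $\supp(1-u)$ by the negative half-rays, the natural range of $|t|$ is the effective \emph{negative} ray length $l^-_{\mathrm{eff}}(s)$, the distance from $\gamma_j(s)$ to the next interface crossing in direction $-\theta_j(s)$, so the inner integral a priori runs over $m\in\bigl(\mp_j(s,-l^-_{\mathrm{eff}}(s)),0\bigr)$. The assertion that this interval is exactly $(-M_j(s),0)$ — i.e.\ that the $(1-u)$-mass on the ray immediately below $\gamma_j(s)$ equals the $u$-mass $M_j(s)$ immediately above it — is the entire content of the second formula, and it is not ``identical'' to the first. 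Cutting a ray by its interface crossings into alternating segments $U_1,V_1,U_2,V_2,\dots$ with $U_i\subset\supp u$ and $V_i\subset\supp(1-u)$, what is needed is the per-crossing balance $\mathrm{mass}(U_i)=\mathrm{mass}(V_i)$; when a ray crosses $\Gamma$ more than once this does not follow from the non-intersection of rays, from the Jacobian identity, or from the per-ray mass balance (which only gives $\sum_i\mathrm{mass}(U_i)=\sum_i\mathrm{mass}(V_i)$), since one must rule out that the monotone transport lets mass from $U_1$ spill past $V_1$ into a later $V_i$. Your proposal simply writes the range $(-M(s),0)$, and point (ii) of your closing paragraph does not supply the missing argument. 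Deferring to \cite{PeletierRoegerTA} is what the paper does too, so this is defensible as a citation; but as a proof, the pairing lemma is the one non-routine step and must be stated and established.
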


With this parametrization, the distance $d(u,1-u)$ takes a particularly simple form:
\[
d(u,1-u) = \sum_j \int_0^{L_{j}} \int_0^{M_{j}(s)} \bigl[\tra_{j}(s,m) - \tra_{j}(s,m-M_{j}(s))\bigr]\, dm \, ds.
\]
From the positivity property $m\,\tra_{j}(s,m)\geq 0$ we therefore find the estimate
\begin{equation}
\label{est:d1}
d(u,1-u) \geq \sum_j \int_0^{L_{j}} \int_0^{M_{j}(s)} \tra_{j}(s,m) \, dm \, ds.
\end{equation}

\medskip

Finally we can state the fundamental estimate:
\begin{prop}[\cite{PeletierRoegerTA}]\label{prop:inequality}
		Under the conditions provided by Lemma~\ref{approxi} we have the lower bound
\begin{multline}
  \G_\es(u) \geq \sum_{j=1}^{J_\es} \int_0^{L_j}\left[ \frac 1{\es^2}
  \left( \frac 1{\sin \beta_j(s)}-1\right) \left( \frac{M_j(s)}{\es} \right)^2 +
 \frac 1{\es^2}\left( \frac{M_j(s)}{\es} -1\right)^2 \right]\, \es\, ds +\\
{}+	\sum_{j=1}^{J_\es} \int_0^{L_j} \frac 1{4\sin \beta_j(s)}
			\left( \frac{M_j(s)}{\es\sin \beta_j(s)}\right)^4\alpha_j'(s)^2\,\es\,ds.
\label{ineq:inequality}
\end{multline}
\end{prop}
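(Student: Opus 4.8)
The plan is to reduce the whole inequality, which is a statement about the three terms of $\G_\es(u)=\es^{-2}(\F_\es(u)-|\o|)$, to one elementary scalar inequality. For $u$ of the regularized form supplied by Lemma~\ref{approxi} one has $\int_\o|\nabla u|=\hf(\Gamma)=\sum_j L_j$ (the $\gamma_j$ being simple and essentially disjoint), so by \eqref{eq:functional} and \eqref{def:rescaled_functional}
\[
\G_\es(u)=\frac1\es\sum_j L_j+\frac1{\es^3}\,d(u,1-u)-\frac1{\es^2}|\o|.
\]
I would then feed in the two identities that come out of the parametrization by transport rays: taking $g\equiv1$ in the change of variables \eqref{eq:change-of-variables1} gives $|\o|=2\sum_j\int_0^{L_j}M_j(s)\,ds$ (mass of $u$ plus mass of $1-u$), and the explicit distance formula stated just above Proposition~\ref{prop:inequality} gives $d(u,1-u)=\sum_j\int_0^{L_j}\int_0^{M_j(s)}\bigl[\tra_j(s,m)-\tra_j(s,m-M_j(s))\bigr]\,dm\,ds$. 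It is essential to keep this \emph{exact} distance formula and not the cruder one-sided bound \eqref{est:d1}: the latter discards the $\{u=0\}$ half of each tube and loses a factor two, which would already contradict the fact that \eqref{ineq:inequality} holds with equality on optimal straight stripes. Collecting the three contributions into a single $\sum_j\int_0^{L_j}(\cdot\,)\,ds$ and completing the square through $\frac1\es-\frac{2M}{\es^2}=\frac1\es\bigl(\tfrac M\es-1\bigr)^2-\frac{M^2}{\es^3}$, the first bracket of the right-hand side of \eqref{ineq:inequality} is produced for free, and the proposition reduces to the \emph{pointwise} estimate
\[
\int_0^{M_j(s)}\bigl[\tra_j(s,m)-\tra_j(s,m-M_j(s))\bigr]\,dm\;\ge\;\frac{M_j(s)^2}{\sin\beta_j(s)}+\frac{\alpha_j'(s)^2\,M_j(s)^4}{4\sin^5\beta_j(s)}
\]
for a.e.\ $s\in E_j$, the contributions of $s\notin E_j$ and of $\{l^+_j(s)=0\}$ being seen to vanish or give equality straight from Definitions~\ref{def:rayquantities} and~\ref{defi:coordm}.

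The next step is to evaluate the left-hand side. Writing $b:=\sin\beta_j(s)$, $a:=\alpha_j'(s)$, $M:=M_j(s)$ and inserting the closed form \eqref{def:tra} of $\tra_j(s,\cdot)$ from Proposition~\ref{prop:coordt}, an elementary integration — cleanest after the substitution $m=\mp_j(s,t)$, but also doable directly by integrating $\sqrt{1-cm}$ — yields, for $a\ne0$,
\[
\int_0^{M}\bigl[\tra_j(s,m)-\tra_j(s,m-M)\bigr]\,dm=\frac{b^3}{3a^2}\Bigl[(1+Y)^{3/2}+(1-Y)^{3/2}-2\Bigr],\qquad Y:=\frac{2aM}{b^2},
\]
which reduces to $M^2/b$ as $a\to0$. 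I would check that the operations are legitimate: $1-Y=(1-a\,l^+_j(s)/b)^2\ge0$ because $M_j=\mp_j(s,l^+_j)=l^+_j\,b-\tfrac12(l^+_j)^2a$ by Definition~\ref{defi:coordm}, so $Y\le1$; and $1+Y\ge0$ is exactly what makes $\tra_j(s,-M)$ well defined, hence is built into the validity of the $\{u=0\}$-parametrization in \eqref{eq:change-of-variables1}. Thus $|Y|\le1$.

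After this the proposition becomes a one-variable inequality: it suffices to show
\[
(1+Y)^{3/2}+(1-Y)^{3/2}-2\;\ge\;\tfrac34\,Y^2+\tfrac{3}{64}\,Y^4\qquad\text{for }|Y|\le1,
\]
since substituting $Y=2aM/b^2$ turns the right-hand side into exactly $\tfrac{M^2}{b}+\tfrac{a^2M^4}{4b^5}$. This holds because $(1+Y)^{3/2}+(1-Y)^{3/2}-2=2\sum_{k\ge1}\binom{3/2}{2k}Y^{2k}$ and every coefficient is nonnegative (the sign of $\binom{3/2}{n}$ is $(-1)^n$ for $n\ge2$, so the even ones are positive); hence for $|Y|<1$ the full sum dominates its first two terms $2\binom{3/2}{2}Y^2+2\binom{3/2}{4}Y^4=\tfrac34Y^2+\tfrac3{64}Y^4$, and $|Y|=1$ follows by continuity. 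Undoing the square-completion then gives \eqref{ineq:inequality}.

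The analytic core above is short; the real obstacle — and the reason \eqref{ineq:inequality} is imported from \cite{PeletierRoegerTA} rather than reproved here — is everything beneath it: that $E_j,\theta_j,\alpha_j,\beta_j,l^+_j,M_j,\tra_j$ are well defined and measurable, that $\alpha_j$ is differentiable a.e.\ on $E_j$ so that \eqref{def:tra} is meaningful, that $\sin\beta_j(s)>0$ for a.e.\ $s\in E_j$ (a transport ray is not tangent to the interface), and above all that $(s,m)\mapsto\gamma_j(s)+\tra_j(s,m)\theta_j(s)$ is a measure-preserving parametrization of $\o$ up to $\L^2$-null sets, so that the identities for $|\o|$ and for $d(u,1-u)$ used at the outset are exact. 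Controlling the degenerate strata — rays tangent to $\Gamma$, $l^+_j(s)=0$, ray endpoints, interface points outside $\mathcal T$ — while preserving the tightness of \eqref{ineq:inequality} on straight stripes is the delicate point; the clean computation only runs once that structure is in place.
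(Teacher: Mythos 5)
The paper itself contains no proof of this proposition --- it is imported verbatim from \cite{PeletierRoegerTA}, and the text around it only sketches the route (exact ray parametrization of $d(u,1-u)$, then an estimate per ray). Your argument follows exactly that route, and its computational core checks out: the square-completion identity $\tfrac1\es-\tfrac{2M}{\es^2}=\tfrac1\es(\tfrac M\es-1)^2-\tfrac{M^2}{\es^3}$; the closed-form evaluation $\int_0^M[\tra_j(s,m)-\tra_j(s,m-M)]\,dm=\tfrac{b^3}{3a^2}\bigl[(1+Y)^{3/2}+(1-Y)^{3/2}-2\bigr]$ with $Y=2aM/b^2$, which I verified from \eqref{def:tra}; the bound $Y\le1$ via $1-Y=(1-a\,l^+_j/b)^2$; the exact match of $\tfrac34Y^2+\tfrac3{64}Y^4$ with $M^2/b+a^2M^4/(4b^5)$ after substitution; and the positivity of the even binomial coefficients $\binom{3/2}{2k}$, $k\ge1$, which gives the one-variable inequality (it is remarkably tight --- near-equality already at $Y=1/2$ --- so keeping exactly the first two even terms is forced, and your check that the crude one-sided bound \eqref{est:d1} would lose a factor $2$ and break the square-completion is also correct). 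The items you defer --- exactness of the change of variables and of the distance formula, measurability and a.e.\ differentiability of $\alpha_j$, positivity of $\sin\beta_j$, and the degenerate strata $s\notin E_j$ or $l^+_j(s)=0$ --- are precisely the facts this paper also states without proof as imported from \cite{PeletierRoegerTA}, so deferring them is consistent with the level of rigour the paper itself adopts for this proposition.
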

The inequality (\ref{ineq:inequality}) should be interpreted as follows. Along a sequence $\{\ue\}$ with bounded energy $\G_\es(\ue)$, the three terms of the right hand side tell us that:
\begin{enumerate}
\item $\sin \beta_\es \ra 1$, which implies that as $\es \ra 0 $ the transport rays tend to be
orthogonal to the curve $\g_\es$;
\item \label{enum:conseqinequality_ii}
$M_\es/\es \ra 1$, forcing the length of the transport rays, expressed in mass
coordinates, to be ${}\sim \es$;
\item $(\alpha_\es')^2$ is bounded in $L^2$, except on a set which tends to zero
in measure, by point~\ref{enum:conseqinequality_ii}. 
\end{enumerate}

\subsection{Regularization of the curves}
We have a Lipschitz bound for $\alpha$ on sets on which  $M(\cdot)$ is bounded from below. 
\begin{prop}[\cite{PeletierRoegerTA}]\label{prop:boundaprime}
		Let $\gamma_j \in \Gamma$. For all\ \ $0<\lambda < 1$\ \ the function $\alpha$ is Lipschitz continuous on the set 
		\begin{equation}\label{ae} 
			A_j^\lambda:=\left\{s\in E_j:\frac{M_j(s)}{\es}\geq 1-\lambda\right\}
		\end{equation}	
		and 
		$$ |\alpha_j'(s)| \leq \frac {2}{\es(1-\lambda)},\quad \mbox{for a.e. }s\in A_j^\lambda.$$
\end{prop}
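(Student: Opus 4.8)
The plan is to obtain the pointwise bound on $\alpha_j'$ directly from the mass--length change of variables of Proposition~\ref{prop:coordt}, and then to upgrade it to genuine Lipschitz continuity of $\alpha_j$ on $A_j^\lambda$ by exploiting the non-intersection of transport rays.

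\emph{Step 1: the pointwise bound.} Fix $j$ and drop the index. By the parametrization result that yields~\eqref{eq:change-of-variables1}, the map $(s,m)\mapsto \gamma(s)+\tra(s,m)\theta(s)$ parametrizes $\o$ with $m$ ranging over $[-M(s),M(s)]$; in particular $\tra(s,m)$ must be a \emph{real} number for every such $m$. By Proposition~\ref{prop:coordt} the value $\tra(s,m)$ is given by~\eqref{def:tra}, so the radicand $1-2\alpha'(s)m/\sin^2\beta(s)$ has to be nonnegative for all $m\in[-M(s),M(s)]$. This expression is affine in $m$ and equals $1$ at $m=0$, so nonnegativity at the two endpoints $m=\pm M(s)$ is equivalent to
\[
|\alpha'(s)|\,\le\,\frac{\sin^2\beta(s)}{2M(s)}\,\le\,\frac{1}{2M(s)}.
\]
Since $M_j(s)\ge\es(1-\lambda)$ for $s\in A_j^\lambda$, this gives $|\alpha_j'(s)|\le \tfrac{1}{2\es(1-\lambda)}\le\tfrac{2}{\es(1-\lambda)}$ wherever $\alpha_j'$ exists on $A_j^\lambda$.

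\emph{Step 2: Lipschitz continuity.} The estimate of Step~1 is meaningful only where $\alpha_j'$ exists and, since $A_j^\lambda$ may be very irregular, does not by itself preclude oscillations of $\alpha_j$ across the complement of $A_j^\lambda$. To control $\alpha_j(s_1)-\alpha_j(s_2)$ for arbitrary $s_1,s_2\in A_j^\lambda$ one uses that the transport rays through $\gamma(s_1)$ and $\gamma(s_2)$ are disjoint (Lemma~\ref{prop:cfm-rays}(1)), while, because $M_j(s_i)\ge\es(1-\lambda)$ and by the relation between mass coordinates and length along a ray (Definition~\ref{defi:coordm}), each of these rays contains a segment of length comparable to $\es(1-\lambda)$ lying in $\supp u$ or in $\supp(1-u)$. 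An elementary planar estimate---two disjoint segments of length $\gtrsim L$ issuing from points at distance $\le\delta$ make an angle $\lesssim\delta/L$---then gives $|\alpha_j(s_1)-\alpha_j(s_2)|\le \tfrac{2}{\es(1-\lambda)}\,|\gamma(s_1)-\gamma(s_2)|\le \tfrac{2}{\es(1-\lambda)}\,|s_1-s_2|$, the constant absorbing the passage from the infinitesimal estimate of Step~1 to a secant estimate. Thus $\alpha_j$ is $\tfrac{2}{\es(1-\lambda)}$-Lipschitz on $A_j^\lambda$, and differentiating a Lipschitz extension recovers $|\alpha_j'(s)|\le\tfrac{2}{\es(1-\lambda)}$ at a.e.\ $s\in A_j^\lambda$. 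The detailed verification of this geometric step is the one carried out in~\cite{PeletierRoegerTA}, to which we refer.

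\emph{Main obstacle.} The substantive difficulty lies entirely in Step~2: the derivative bound is immediate from the change of variables, but converting it into Lipschitz continuity on the irregular set $A_j^\lambda$ requires the non-crossing property together with a careful planar argument, because two nearly collinear rays may cross only far from the interface and hence constrain the turning of $\theta$ on one side only. Identifying, through the mass--length relation, the side on which the ray is long enough---and tracking how this degrades when $\sin\beta_j$ is small---is what one must handle with care, and it is what fixes the value of the constant.
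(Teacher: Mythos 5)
This proposition is quoted in the paper from \cite{PeletierRoegerTA} and no proof of it appears in the present text, so there is no in-paper argument to compare yours against; I can only assess your reconstruction on its own terms. Your Step 1 is sound: requiring the radicand in \eqref{def:tra} to be nonnegative for all $m\in[-M(s),M(s)]$ (equivalently, requiring $\mp(s,\cdot)$ to remain invertible over the mass range actually used in \eqref{eq:change-of-variables1}) does give $|\alpha'(s)|\le \sin^2\beta(s)/(2M(s))\le 1/(2M(s))$ wherever $\alpha'$ exists, which is consistent with (indeed stronger than) the stated constant. The one caveat is logical order: the formulas for $\mp$ and $\tra$ already presuppose the existence of $\alpha'(s)$, so Step 1 cannot by itself establish differentiability; the clean route is to prove the Lipschitz bound first and then invoke Rademacher, which is why Step 2 carries all the weight.

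And Step 2 is where your argument has a genuine gap. The ``elementary planar estimate'' you invoke --- two disjoint segments of length $\gtrsim L$ issuing from points at distance $\le\delta$ make an angle $\lesssim\delta/L$ --- is false as stated: two one-sided segments emanating from nearby points can diverge at any angle without meeting (take directions at $80^\circ$ and $100^\circ$ from base points $\delta$ apart). The estimate is only true when each segment extends a length $\gtrsim L$ on \emph{both} sides of its base point, so that a nonzero angle forces the full lines to cross within reach of both segments. For transport rays this two-sidedness does hold --- the ray through $\gamma(s)$ penetrates $\supp u$ over a length controlled by $M(s)$ and, by the mass balance of the transport, the receiving portion in $\supp(1-u)$ must carry the same mass $M(s)$ and hence also has comparable length --- but this is exactly the point you leave unargued (your ``or'' between $\supp u$ and $\supp(1-u)$, and your own admission that the rays ``constrain the turning of $\theta$ on one side only''). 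There is also a residual sign issue: disjointness of long two-sided segments bounds the angle between the \emph{lines}, i.e.\ $\alpha$ mod $\pi$, and one must separately rule out $\theta$ flipping by nearly $\pi$ between nearby points of $A_j^\lambda$. Since you ultimately defer the ``detailed verification'' to \cite{PeletierRoegerTA}, your write-up identifies the right mechanism but does not constitute a proof of the Lipschitz continuity claim.
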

\begin{rem}
Note that 
\begin{equation}\label{bigmass} 
		1\leq\frac{M_j(s)}{\es(1-\lambda)}\qquad\text{for a.e. }s\in A_j^\lambda,
\end{equation}	
and 
\begin{equation}\label{smallmass} 
		1< \left(1-\frac{M_j(s)}{\es}\right)\frac1\lambda\ 
		\qquad\text{for a.e. }s\in E_j\setminus A_j^\lambda.
\end{equation}
\end{rem}

This proposition provides a Lipschitz bound on a subset of $E$. 
In the following computations it will be more convenient to approximate the curves $\Gamma$ by a more regular family, in order to have $|\alpha'|$  bounded almost everywhere. 
\begin{defi}\label{modified}  (Modified curves)\ Let $\gamma_{j} \in \Gamma$ and $0<\lambda <1$. Let $A_j^\lambda$ be as in (\ref{ae}), choose a 
 Lipschitz continuous function $\tilde{\alpha}_{j}:[0,L_{j}] \ra \R$ mod $2\pi$ 
such that
\begin{equation}\label{defatilde}
		\tilde{\alpha}_{j} = \alpha_{j}\quad \mbox{ on } A_j^\lambda,
\end{equation}
\begin{equation}\label{boundatilde}
		 |\tilde{\alpha}_{j}'(s)| \leq \frac {2}{\es(1-\lambda)}\quad \forall\,s \in [0,L_{j}]
\end{equation}
and, according to \pref{concatenation},
\begin{equation}\label{alphaperiodic}
	\tilde{\alpha}_{j}(L_{j})= \tilde{\alpha}_{\sigma_\e(j)}(0).
\end{equation}
Set
\begin{equation}\label{thetas}	 
  \tilde{\theta}_{j}=\left( \begin{array}{c}
	\cos \tilde{\alpha}_{j}\\
	\sin \tilde{\alpha}_{j}
  \end{array} \right),\h	
\tilde{\theta}_{j}^\bot=\left( \begin{array}{c}
	\sin \tilde{\alpha}_{j}\\
	-\cos \tilde{\alpha}_{j}
  \end{array} \right),\quad \mbox{so that }\ 
\frac{d}{ds}	\tilde{\theta}_{j}^\bot 	=\tilde{\alpha}'_{j}\tilde{\theta}_{j}.
\end{equation}												

We define $\tilde{\gamma}_{j}$ to be the curve in $\R^2$ which satisfies 
\begin{eqnarray}
		\tilde{\gamma}_{j}(0) 	&=&	\gamma_{j}(0),\label{initial}\\
		\tilde{\gamma}'_{j}(s)	&=& \tilde{\theta}^\bot_{j}(s)\quad \mbox{for all }s \in [0,L_{j}].\label{defgammap}
\end{eqnarray}		 	
Let $\tilde{\Gamma}, \tilde{\mu}, \tilde{P}$ be (respectively) the correspondingly modified  curves, rescaled measures on curves, projections on tangent planes. By construction we have  
\begin{equation}\label{defptilde}
		\tilde{P}(\tilde{\gamma}_j(s))=\tilde{\theta}^\bot_j(s)\! \otimes\, \tilde{\theta}^\bot_j(s).
\end{equation}
\end{defi}
\begin{rem} As in \cite[Remark 7.2, Remark 7.19]{PeletierRoegerTA} 
	\begin{itemize} 
		\item Both $\gamma_j$ and $\tilde{\gamma}_j$ are defined on the same interval $[0,L_j]$;
		\item Both $\gamma_j$ and $\tilde{\gamma}_j$ are parametrized by arclength, $|\gamma_j'|=|\tilde{\gamma}_j'|=1$;
		\item Note that although a modified $\tilde{u}_\es$ would not make sense, because an open curve cannot be the boundary of any set, we still can define the rescaled measures $\tilde{\mu}_\es$ as
\begin{equation}\label{tildemu} 
		\tilde{\mu}_\es(B):=\es \hf(B\cap \tilde{\Gamma}_\es),\quad \mbox{for all Borel measurable sets }B\subset \R^2;
\end{equation}		
	\item The curves $\tilde \gamma_{j}$ need not be confined to $\Omega$; however, we show in the next section that as $\e\to0$, $\tilde \mu_\e\stackrel{*}\weakto\mu = \tfrac12 \L^2 \llcorner\Omega$. 
\end{itemize}
\end{rem}

\subsection{Weak compactness and the lower bound}

In this section we show that if $u_\e$ is an energy-bounded sequence, then the quantities $u_\e$, $\mu_\e$, $(\mu_\e,P_\e)$, as well as their regularizations,  are weakly compact in the appropriate spaces (Lemmas~\ref{lemmaconvergence} and~\ref{bigestimate}). This provides part of the proof of part~\ref{thetheorem:part1} of Theorem~\ref{thetheorem}. 
The weak convergence also allows us to deduce the lower bound estimate (Lemma~\ref{lemma:divint}), which proves part~\ref{thetheorem:part2} of Theorem~\ref{thetheorem}.

\begin{lemma}\label{lemmaconvergence} Define $\mu:=\frac12 \L^2 \llcorner \o\in RM(\R^2)$. Let the sequence $\{\ue\}$ be such that
$$
\sup_\e \G_\es(\ue) :=  \Lambda<\infty.
$$
After extracting a subsequence, we have the following. As $\es \ra 0$,
\begin{align}\label{convergenceu}
		&\ue \rightharpoonup \frac12 \quad \mbox{weakly in }L^p(\o)\qquad\text{for all $1\leq p<\infty$},\\
\label{convergencemu}
		&\mue \ras \mu\quad \mbox{weakly-\star\ in }RM(\R^2).
\end{align}
Denoting by $\tilde{\gamma}$ and $\tilde{\mu}_\es$ the modified curves and measures (see Definition~\ref{modified}), there exists a constant $C(\lambda,\Lambda)>0$ such that 
\begin{equation}\label{estimategammat}
		\sum_j \int_0^{L_{\es,j}}\left| \tilde{\gamma}'_{\es,j}(s) - \gamma'_{\es,j}(s)\right| ds \leq C(\lambda,\Lambda),
\end{equation}
and
\begin{equation}\label{patch}
		\sup_j \int_0^{L_{\es,j}}\left| \tilde{\gamma}'_{\es,j}(s) - \gamma'_{\es,j}(s)\right| ds \leq \e^{1/2}C(\lambda,\Lambda).
\end{equation}
We have 
\begin{equation}\label{convergencemut}
		\tilde{\mu}_\es \ras \mu\quad \mbox{weakly-\star\ in }RM(\R^2).
\end{equation}
There exists $P\in L^{\infty}(\R^2;\R^{2\times 2})$, with supp$(P)\subseteq \o$ such that
\begin{align}\label{weakp}
		&(\mue,P_\es)  \weakto (\mu,P),\\
\label{weakpt}
		&(\tilde{\mu}_\e,\tilde{P}_\es)  \weakto (\mu,P),
\end{align}
as $\es \ra 0$, in the sense of the weak convergence in $L^p$ for function-measure pairs of Definition \ref{def:weak-coup-conv}, for every $ 1\leq p <\infty$.
\end{lemma}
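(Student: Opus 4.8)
The plan is to extract everything from the bound $\G_\es(\ue)\le\Lambda$ through three routes: the elementary identity $\es\int_\o|\nabla\ue|+\tfrac1\es\,d(\ue,1-\ue)=|\o|+\es^2\G_\es(\ue)$, the fundamental estimate of Proposition~\ref{prop:inequality}, and the change-of-variables formula~\pref{eq:change-of-variables1} together with~\pref{est:d1}. The first route gives at once $\mue(\R^2)=\es\sum_jL_{\es,j}\le C$, so $\mue$ is tight and, along a subsequence, $\mue\ras\mu_0$ for some $\mu_0\in RM(\R^2)$ supported in $\overline\o$; it also supplies the bookkeeping bounds $\sum_jL_{\es,j}\le C/\es$ and $d(\ue,1-\ue)\le C\es$. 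From Proposition~\ref{prop:inequality} I would read off, as in the discussion following~\pref{ineq:inequality}, that $\sum_j\int_0^{L_j}(M_j/\es-1)^2\,ds\le\es\Lambda$ (hence the ``bad set'' $\bigcup_j([0,L_j]\setminus A_j^\lambda)$ has total length $\le\es\Lambda/\lambda^2$), and that, restricting the first term of~\pref{ineq:inequality} to $A_j^\lambda$ where $(M_j/\es)^2\ge(1-\lambda)^2$, $\sum_j\int_{A_j^\lambda}(\tfrac1{\sin\beta_j}-1)\,ds\le\es\Lambda/(1-\lambda)^2$, and therefore also $\sum_j\int_{A_j^\lambda}(1-\sin\beta_j)\,ds=O(\es)$ after peeling off the set $\{\sin\beta_j\le\tfrac12\}$.

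To identify $\mu_0$ and prove~\pref{convergenceu} and~\pref{convergencemu}, I would test against a Lipschitz function $g$ with constant $L_g$ and rewrite, via~\pref{eq:change-of-variables1},
\[
\int_\o g\,\ue\,dx-\int g\,d\mue=\sum_j\int_0^{L_j}\Big[\int_0^{M_j(s)}\big(g(\gamma_j+\tra_j\theta_j)-g(\gamma_j)\big)\,dm+\big(M_j(s)-\es\big)g(\gamma_j)\Big]\,ds.
\]
The second summand is $O(\es)$ by Cauchy--Schwarz (using $\sum_j\int_0^{L_j}(M_j/\es-1)^2\,ds\le\es\Lambda$ and $\sum_jL_j\le C/\es$), and the first is bounded by $L_g\sum_j\int_0^{L_j}\int_0^{M_j(s)}\tra_j\,dm\,ds\le L_g\,d(\ue,1-\ue)=O(\es)$ by~\pref{est:d1}; thus $\int_\o g\,\ue\to\int g\,d\mu_0$. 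Running the same computation on the second line of~\pref{eq:change-of-variables1} (with the companion bound $\sum_j\int_0^{L_j}\int_{-M_j}^0|\tra_j|\,dm\,ds\le d(\ue,1-\ue)$, which follows from the explicit formula for $d(u,1-u)$ preceding~\pref{est:d1}) gives $\int_\o g(1-\ue)\to\int g\,d\mu_0$ as well, so $\int_\o g\,dx=2\int g\,d\mu_0$. By density in $C_c(\R^2)$ this forces $\mu_0=\tfrac12\L^2\llcorner\o=\mu$, and feeding this back yields $\mue\ras\mu$ and $\ue\L^2\llcorner\o\ras\tfrac12\L^2\llcorner\o$, which upgrades to~\pref{convergenceu} since $\|\ue\|_\infty\le1$ and $\o$ is bounded.

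For the curve estimates I would use that $\det(\gamma_j',\theta_j)=\sin\beta_j$ gives $|\gamma_j'(s)-\theta_j^{\perp}(s)|=\sqrt2\,(1-\sin\beta_j(s))^{1/2}$, whereas on $A_j^\lambda$ one has $\tilde\gamma_j'=\tilde\theta_j^{\perp}=\theta_j^{\perp}$ by~\pref{defatilde}--\pref{defgammap}, and $|\tilde\gamma_j'-\gamma_j'|\le2$ everywhere. Splitting $[0,L_j]$ into $A_j^\lambda$ and its complement and using Cauchy--Schwarz with $\sum_jL_j\le C/\es$ gives $\sum_j\int_0^{L_j}|\tilde\gamma_j'-\gamma_j'|\,ds\le C(\lambda,\Lambda)$, which is~\pref{estimategammat}; for~\pref{patch} I would argue one curve at a time, where $L_j\le1$ turns this into $\int_0^{L_j}|\tilde\gamma_j'-\gamma_j'|\le\big(\sum_i\int_0^{L_i}|\tilde\gamma_i'-\gamma_i'|^2\big)^{1/2}$ and the sum of squares is $O(\es)$ by the same estimates. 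Then $|\tilde\gamma_j(s)-\gamma_j(s)|\le\int_0^{L_j}|\tilde\gamma_j'-\gamma_j'|\le\es^{1/2}C$ by~\pref{initial}, so for Lipschitz $g$, $\big|\int g\,d\tilde\mue-\int g\,d\mue\big|\le L_g\es^{1/2}C\,\es\sum_jL_j\to0$, and~\pref{convergencemut} follows from~\pref{convergencemu}. Finally, for~\pref{weakp}--\pref{weakpt}: $|P_\es|\equiv1$ on $\mue$ gives condition i) of Definition~\ref{def:weak-coup-conv} for every $p$, so Remark~\ref{rem:weak-compactness} produces, along a further subsequence, $(\mue,P_\es)\weakto(\mu,P)$ with $P\in L^\infty(\R^2;\R^{2\times2})$, $|P|\le1$ a.e., and $\supp P\subseteq\o$ (test $\eta\in C^0_c(\R^2\setminus\overline\o)$, and use $\mu(\partial\o)=0$). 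Since $P_\es(\gamma_j(s))=\gamma_j'(s)\otimes\gamma_j'(s)$ and, by~\pref{defptilde}, $\tilde P_\es(\tilde\gamma_j(s))=\tilde\gamma_j'(s)\otimes\tilde\gamma_j'(s)$, the difference $\big|\int\eta:\tilde P_\es\,d\tilde\mue-\int\eta:P_\es\,d\mue\big|$ is bounded by $\es\sum_j\int_0^{L_j}\big(\|\nabla\eta\|_\infty|\tilde\gamma_j-\gamma_j|+2\|\eta\|_\infty|\tilde\gamma_j'-\gamma_j'|\big)\,ds$, which vanishes by~\pref{estimategammat}--\pref{patch} and $\es\sum_jL_j\le C$; hence $(\tilde\mue,\tilde P_\es)\weakto(\mu,P)$ as well.

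The step I expect to be the main obstacle is the one that simultaneously identifies the limit as the \emph{uniform} measure $\tfrac12\L^2\llcorner\o$ and proves equidistribution of $\ue$: it is exactly here that one must combine the $L^2$-control of $M_\es/\es$ (to replace the half-width $M_j(s)$ by $\es$ with an $O(\es)$ error), the smallness $d(\ue,1-\ue)=O(\es)$ of the transport cost (to replace $g(\gamma_j+\tra_j\theta_j)$ by $g(\gamma_j)$), and the identity $u+(1-u)\equiv1$. A subordinate technical point, which I would simply import from~\cite{PeletierRoegerTA}, is that the set of interface points not lying in the relative interior of a transport ray is $\hf$-negligible, so that~\pref{eq:change-of-variables1} indeed accounts for all of the mass.
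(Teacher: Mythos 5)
Your proposal is correct and follows essentially the same route as the paper's proof: the change of variables \pref{eq:change-of-variables1} with the transport bound \pref{est:d1} and the Cauchy--Schwarz control of $\sum_j\int(M_j/\es-1)^2$ for the equidistribution statements, the splitting of $[0,L_j]$ into $A_j^\lambda$ and its complement together with $|\gamma'-\theta^\perp|^2=2(1-\sin\beta)$ for \pref{estimategammat}--\pref{patch}, and the measure-function-pair compactness plus the difference estimate $\|\nabla\eta\|_\infty|\tilde\gamma-\gamma|+2\|\eta\|_\infty|\tilde\gamma'-\gamma'|$ to identify the two weak limits. The only deviations are cosmetic (you identify $\mu_0$ by a subsequential limit rather than by a direct estimate of $|\int gu-\tfrac12\int g|$, and you run a single Cauchy--Schwarz on the $L^2$-norm of $|\tilde\gamma'-\gamma'|$ for \pref{patch} instead of treating the two pieces with different powers), and both variants are sound.
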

\bigskip

\begin{rem}
 Let $\{\ue\}\subset K$, than $\{\mue\}$ and $\{\tilde{\mu}_\e\}$ are \textit{tight} and thus relatively compact in $RM(\R^2)$ (see e.g. \cite[Th. 5.1.3]{ags}). 
\end{rem}

\textit{Proof of (\ref{convergenceu}), (\ref{convergencemu}). } Let $g\in C^1_c(\R^2)$. By~\pref{eq:change-of-variables1} (again we drop the subscript $\e$) we have
\begin{multline*}
  \int_{\R^2} g(x)u(x)\, dx = \sum_j \int_0^{L_j}\int_0^{M_j(s)}
    g\bigl(\gamma_j(s) + \tra_j(s,m)\theta_j(s)\bigr)\, dm\,\es ds\\
 = \sum_j \int_0^{L_j}\int_0^{M_j(s)}\Big[
  g\bigl(\gamma_j(s)\bigr)
    + \nabla g\bigl(\gamma_j(s) + \zeta_j(s,m)\theta_j(s))\bigr)\cdot \theta_j(s)
  \Big]\, dm\,\es ds,
\end{multline*}
for some $0\leq \zeta_j(s,m)\leq \tra_j(s,m)$. Therefore
\begin{multline*}
\left| \int_{\R^2} g(x)u(x)\, dx 
  - \sum_j \int_0^{L_j} M_j(s)g\bigl(\gamma_j(s)\bigr)\, ds\right|\\
\begin{aligned}
&\leq {\big\| \nabla g\big\|}_{\infn}\sum_j \int_0^{L_j}\int_0^{M_j(s)}\tra_j(s,m)\, dm\,\es ds
\stackrel{\pref{est:d1}}\leq {\big\| \nabla g\big\|}_{\infn}\, d_1(u,1-u) \\
&  \leq \e{\big\| \nabla g\big\|}_{\infn} \F_\e(u).
\end{aligned}
\end{multline*}
For $\int g(1-u)$ a similar estimate holds.
Also, 
\begin{multline*}
\left| \sum_j \int_0^{L_j} M_j(s)g\bigl(\gamma_j(s)\bigr)\, ds
  - \sum_j \int_0^{L_j} g\bigl(\gamma_j(s)\bigr)\, \e\, ds\right|\\
\begin{aligned}
&= \left|\sum_j\int_0^{L_j} \left(\frac{M_j(s)}\es -1\right)g(\gamma_j(s))\,\es\, ds\right|\\
&\leq \e  \|g\|_{\infn} \Bigl(\sum_j \e L_j\Bigr)^{1/2}
   \Bigg(\sum_j \int_0^{L_j}\frac1{\e^2} \left(\frac{M_j(s)}\es -1\right)^2\, \es\, ds
      \Biggr)^{1/2}\\
&\leq \e  \|g\|_{\infn} \F_\e(u)^{1/2} \G_\e(u)^{1/2}
\end{aligned}
\end{multline*}
Therefore, to prove \pref{convergenceu} we estimate
\begin{align*}
\lefteqn{\left|\int_{\R^2} g(x) u(x)\,dx - \frac12 \int_{\R^2} g(x)\, dx\right|}
\qquad&\\
&\leq \frac12 \left|\int_{\R^2} g(x)u(x)\, dx 
  - \sum_j \int_0^{L_j} M_j(s)g\bigl(\gamma_j(s)\bigr)\, ds\right|\\
&\qquad {}
  + \frac12 \left| \sum_j \int_0^{L_j} M_j(s)g\bigl(\gamma_j(s)\bigr)\, ds - \int_{\R^2} g(x)(1-u(x))\, dx\right|\\
&\leq \e{\big\| \nabla g\big\|}_{\infn} \F_\e(u).
\end{align*}
Since the assumptions on $\G_\e(u)$ imply that $\F_\e(u)$ is bounded, this converges to zero as $\e\to0$, which proves~\pref{convergenceu} for smooth functions $g$. For general $g\in L^{p'}(\R^2)$ we approximate by smooth functions and use the boundedness of $u_\e$ in $L^p(\R^2)$.

To prove~\pref{convergencemu} we remark that 
\begin{align*}
\left|\int_{\R^2} g \, d\mu_\e - \int_{\R^2} g\,d\mu\right| 
&\leq \left|\int_{\R^2} g \, d\mu_\e - \int_{\R^2} gu \right| 
  +\left|\int_{\R^2} gu  - \frac12\int_{\R^2} g\right|\\
&\leq 
 \left|  \sum_j \int_0^{L_j} g\bigl(\gamma_j(s)\bigr)\, \e\, ds
    - \sum_j \int_0^{L_j} M_j(s)g\bigl(\gamma_j(s)\bigr)\, ds\right|\\
& \qquad {}
  + \left| \sum_j \int_0^{L_j} M_j(s)g\bigl(\gamma_j(s)\bigr)\, ds - \int_{\R^2} gu\right|
  + {\big\| \nabla g\big\|}_{\infn} \e\F_\e(u)\\
&\leq \e  \|g\|_{\infn} \F_\e(u)^{1/2} \G_\e(u)^{1/2}
   + 2\e{\big\| \nabla g\big\|}_{\infn} \F_\e(u).
\end{align*}
Again we conclude by this estimate for smooth functions $g$, and extend the result to any $g\in C^0_c(\R^2)$ by using the tightness of $\mue$ and the uniform boundedness of $\mu_\e(\R^2)$.

\drop{
\begin{align*}
\left| \int_\o\! g(x)dx \right.&\left.- 2\es\!\!\int_\o\! g\, d|\nabla u| \right|\leq\\
&\leq \left| \int_\o\! g(x)\,u(x)\,dx - \sum_j\int_0^{L_j} M_j(s)\,g(\gamma_j(s))\, ds \right|+\\
&\qquad+\left|\sum_j\int_0^{L_j} M_j(s)\,g(\gamma_j(s))\, ds -\es\!\!\int_\o\! g\,d|\nabla u|\right|+ \\
&\quad+ \left| \int_\o\! g(x)\,(1-u(x))\,dx - \sum_j\int_0^{L_j} M_j(s)\,g(\gamma_j(s))\, ds \right|+\\
&\qquad+\left|\sum_j\int_0^{L_j} M_j(s)\,g(\gamma_j(s))\, ds -\es\!\!\int_\o\! g\, d|\nabla u|\right|\\
&\leq  2  \es{\big\| \nabla g\big\|}_{L^{\infty}}\,\F_\es(\ue)\ +\ 2\es\sqrt{\F_\es(\ue)} {\big\| g\big\|}_{L^{\infty}}\G_\es(\ue)^{1/2}
\end{align*}

So we have
$$ 	\int_\o g(x)\,u(x)\, dx =  \sum_j \int_0^{L_j} M_j(s)g\bigl(\gamma_j(s)\bigr)\, ds +R,$$
where we can estimate
\begin{align}
R &= \sum_j \int_0^{L_j}\int_0^{M_j(s)}\nabla g\bigl(\gamma_j(s) + \zeta_j(s,m)\theta_j(s))\bigr)\cdot \theta_j(s)\, dm\,\es ds \notag\\
\label{rest1}
 &\leq {\big\| \nabla g\big\|}_{L^{\infty}}\sum_j \int_0^{L_j}\int_0^{M_j(s)}\tra_j(s,m)\, dm\,\es ds \stackrel{\pref{est:d1}}\leq {\big\| \nabla g\big\|}_{L^{\infty}}\, d_1(u,1-u).
\end{align}
On the other hand we have the trivial equality
\begin{align*}
\int_0^{L_j} M_j(s)g(\gamma_j(s))\, ds - \int_0^{L_j} g(\gamma_j(s))\,\es ds 
=  \int_0^{L_j} \left(\frac{M_j(s)}\es -1\right)g(\gamma_j(s))\,\es ds, 
\end{align*}
so that by summing over $\{ \gamma_j \}$ we find
\begin{align}
 \left|\sum_j \right.&\left.\int_0^{L_j} M_j(s)g(\gamma_j(s))\, ds - \int_\Omega g\, d|\nabla u|\right|\notag\\
&=  \left|\sum_j \right.\left.\int_0^{L_j} M_j(s)g(\gamma_j(s))\, ds -
   \sum_j\int_0^{L_j} g(\gamma_j(s))\,\es ds \right|\notag\\
 &\leq {\big\| g\big\|}_{L^{\infty}} \left(\sum_j\int_0^{L_j}\es ds \right)^{1/2}\left(\sum_j\int_0^{L_j} \left(\frac{M_j(s)}\es -1\right)^2\es ds\right)^{1/2} \notag\\
 \label{rest2}
	 &\leq  \left(\es \sum_j L_j \right)^{1/2}{\big\| g\big\|}_{L^{\infty}}\,\e \G_\e(u) ^{1/2}.
	\end{align}
We observe that the argument can be repeated for $1-u(x)$. Finally, owing to (\ref{rest1}), (\ref{rest2}) we estimate:
\begin{align*}
\left| \int_\o\! g(x)dx \right.&\left.- 2\es\!\!\int_\o\! g\, d|\nabla u| \right|\leq\\
&\leq \left| \int_\o\! g(x)\,u(x)\,dx - \sum_j\int_0^{L_j} M_j(s)\,g(\gamma_j(s))\, ds \right|+\\
&\qquad+\left|\sum_j\int_0^{L_j} M_j(s)\,g(\gamma_j(s))\, ds -\es\!\!\int_\o\! g\,d|\nabla u|\right|+ \\
&\quad+ \left| \int_\o\! g(x)\,(1-u(x))\,dx - \sum_j\int_0^{L_j} M_j(s)\,g(\gamma_j(s))\, ds \right|+\\
&\qquad+\left|\sum_j\int_0^{L_j} M_j(s)\,g(\gamma_j(s))\, ds -\es\!\!\int_\o\! g\, d|\nabla u|\right|\\
&\leq  2  \es{\big\| \nabla g\big\|}_{L^{\infty}}\,\F_\es(\ue)\ +\ 2\es\sqrt{\F_\es(\ue)} {\big\| g\big\|}_{L^{\infty}}\G_\es(\ue)^{1/2}
\end{align*}
\rightline{$\Box$}
} 
\medskip

\textit{Proof of (\ref{estimategammat}) and~\pref{patch}}. (As in \cite{PeletierRoegerTA}, with the appropriate substitutions of $\es$) Suppressing the indexes $\es,j$, we compute that
\begin{equation}\label{boundga}
		\int_0^L \!\!\!\bigl|\tilde{\gamma}'(s) -\gamma'(s)\bigr|\, ds \leq \int_0^L \!\big|\tilde{\gamma}'(s) -\gamma'(s) \big|\,\chi_{\left\{\frac{M}{\es}\geq 1-\lambda\right\}}\,ds + \int_0^L \!\big|\tilde{\gamma}'(s) -\gamma'(s) \big|\,\chi_{\left\{\frac{M}{\es}< 1-\lambda\right\}}\,ds.
\end{equation}
Recall that if $s\in A_j^\lambda$ (defined in \pref{ae}), then by (\ref{defatilde}) and (\ref{defgammap}) we have 
\begin{equation}\label{gate}
			\tilde{\gamma}'(s)= \tilde{\theta}^\bot(s)=\theta^\bot(s).
\end{equation}
By definition of $\beta$ it follows that for all $s$
\begin{equation}\label{gate2}
		\big| \gamma'(s) - \theta^\bot(s)\big|^2 = 2\big(1-\sin\beta(s)\big), 
\end{equation}
and 
\begin{equation}\label{gate3}
		\big| \tilde{\gamma}'(s) - \gamma'(s)\big| \leq \big| \tilde{\gamma}'(s)\big|+\big|\gamma'(s) \big| =2.
\end{equation}
Collecting (\ref{gate}), (\ref{gate2}), (\ref{gate3}) and (\ref{bigmass}), (\ref{smallmass}), we can estimate (\ref{boundga}) as
\begin{multline*}
   \int_0^L \!\bigl|\tilde{\gamma}'(s) -\gamma'(s)\bigr|\, ds \leq \sqrt 2\int_0^L |1-\sin\beta(s)|^{1/2} \frac{M(s)}{\es(1-\lambda)} ds + \int_0^L 2 \left(1-\frac{M(s)}{\es}\right)^2\frac1{\lambda^2}\, ds\\
   \begin{aligned}
		&\leq \frac{\sqrt{2L}}{(1-\lambda)}\left( \int_0^L \left|\frac1{\sin\beta(s)}-1\right| \left(\frac{M(s)}{\es}\right)^2 ds\right)^{1/2} +\frac{2}{\lambda^2}\int_0^L \left(1-\frac{M(s)}{\es}\right)^2\, ds\\
		&\leq \frac{\sqrt{2L\e}}{(1-\lambda)}\; \G_\e(u)^{1/2} +\frac{2\e}{\lambda^2}\,\G_\e(u).
  \end{aligned}
\end{multline*}
Since $L_j \leq 1$ for all $j$, we obtain \pref{patch}. 
Turning to~\pref{estimategammat}, we repeat the same estimate while taking all curves together, to find
\begin{equation}
\label{est:pre-gammat}
  \sum_j\int_0^{L_j} \bigl|\tilde{\gamma}_j'(s) -\gamma_j'(s)\bigr|\, ds 
 \leq \frac{\sqrt{2}}{(1-\lambda)}\Big(\es \sum_j L_j\Big)^{1/2}\G_\es(u)^{1/2} +\frac{2\es}{\lambda^2}\G_\es(u).
\end{equation}
This proves~\pref{estimategammat}. 
\medskip

\textit{Proof of (\ref{convergencemut}).} We have to prove that 
$$ \int_{\R^2} g(x)\,d\tilde{\mu}_\es(x) \ra \int_{\R^2} g(x)\,d\mu(x),\h \forall\,g \in  C_c(\R^2).$$
We deduce from~\pref{patch} using
\[
\left|\tilde{\gamma}_j(s)-\gamma_j(s)\right| \leq \left|\tilde{\gamma}_j(0)-\gamma_j(0)\right| +\int_0^s \left|\tilde{\gamma}'_j(\sigma)-\gamma_j'(\sigma)\right|\,d\sigma \leq \int_0^{L_j} \left|\tilde{\gamma}'_j(\sigma)-\gamma_j'(\sigma)\right|\,d\sigma,
\]
the estimate
\begin{equation}
\label{est:estimategamma}
\sum_j\int_0^{L_j} \left|\tilde{\gamma}_j(s)-\gamma_j(s)\right| \, \e\, ds
\leq \e^{3/2} C(\lambda,\Lambda) \sum_j L_j
\leq \e^{1/2} C_1(\lambda,\Lambda).
\end{equation}
Combining this with the calculation
\begin{align*}
\left| \int_{\R^2} g\,d\tilde{\mu}_\es -\int_{\R^2} g\,d\mue \right| 
  &= \left| \sum_j\int_0^{L_j} g(\tilde{\gamma}_j(s))\,\es ds -\sum_j\int_0^{L_j} g(\gamma_j(s))\,\es ds \right|\\
&\leq \|\nabla g\|_{\infn}\sum_j \int_0^{L_j}\left|\tilde{\gamma}_j(s)-\gamma_j(s)\right| \es ds,
\end{align*}
we find
\[
\left| \int_{\R^2} g\,d\tilde{\mu}_\es -\int_{\R^2} g\,d\mue \right| \leq \|\nabla g\|_{\infn}\;\es^{1/2} C_1(\lambda,\Lambda).
\]
\qed 

\textit{Proof of (\ref{weakp}) and (\ref{weakpt}).} As a norm for the projections we adopt the Frobenius norm: 
$$ |P|:=\Big(\sum_{i,j=1}^2 P_{ij}^2\Big)^{1/2}.$$
Let $p\geq 1$, since $|P_\es|=1$ for every $\es$, by compactness in $RM(\o)$ \cite[Theorem 5.4.4]{ags} or \cite[Theorem 3.1]{Moser01} and (\ref{convergencemu}), we obtain the existence of a limit point $P\in L^\infty(\o;\R^{2 \times 2})$, such that
$$ 	(\mue,P_\es) \weakto (\mu,P) \mbox{ weakly in the sense of Def. \ref{def:weak-coup-conv} on $\Omega$.}$$ 
In the same way, owing to (\ref{convergencemut}), there exists a $\tilde{P}\in L^\infty(\R^2;\R^{2 \times 2})$ such that
$$ 	(\tilde{\mu}_\es,\tilde{P}_\es) \weakto (\mu,\tilde{P}), \mbox{ weakly in the sense of Def. \ref{def:weak-coup-conv} on $\R^2$}.$$
For every $\eta \in C^1_c(\R^2;\R^2)$ we have
\begin{multline*}
\int_{\R^2} P_\es \,\eta\,d\mue - \int_{\R^2} \tilde{P}_\es \,\eta\,d\tilde{\mu}_\es 
  = \sum_j \int_0^{L_j}\left[P_\es(\gamma_{\es,j}(s)) \,\eta(\gamma_{\es,j}(s)) 
    - \tilde{P}_\es(\tilde{\gamma}_{\es,j}(s)) \,\eta(\tilde{\gamma}_{\es,j}(s))\right]\,\es ds\\
= \sum_j \int_0^{L_j}\left[ P_\es(\gamma_\es)\big( \eta(\gamma_{\es,j}) - \eta(\tilde{\gamma}_{\es,j})\big) + \big( P_\es(\gamma_{\es,j}) -\tilde{P}_\es(\tilde{\gamma}_{\es,j}) \big)\,\eta(\tilde{\gamma}_{\es,j})\right]\,\es ds,
\end{multline*}
and we can estimate
\begin{equation*}
		\big| P_\es(\gamma_{\es,j})\big( \eta(\gamma_{\es,j}) - \eta(\tilde{\gamma}_{\es,j})\big) \big| \leq \|\nabla\eta\|_{\infn}\,|\gamma_{\es,j}-\tilde{\gamma}_{\es,j}|,
\end{equation*}
and
\begin{align*}
  \big| \big( P_\es(\gamma_{\es,j}) -\tilde{P}_\es(\tilde{\gamma}_{\es,j}) \big)\,\eta(\tilde{\gamma}_{\es,j}) \big|& = 
  \Bigl|\bigl(\gamma_{\e,j}'\cdot \eta(\tilde\gamma_{\e,j})\bigr)\gamma_{\e,j}'
    - \bigl(\tilde\gamma_{\e,j}'\cdot \eta(\tilde\gamma_{\e,j})\tilde\gamma_{\e,j}'\bigr)\Big|\\
  &= \Big| \big( (\gamma_{\es,j}'-\tilde{\gamma}_{\es,j}')\cdot\eta(\tilde{\gamma}_{\es,j})\big) \gamma_{\es,j}'+ (\tilde{\gamma}_{\es,j}'\cdot \eta(\tilde{\gamma}_{\es,j}) )(\gamma_{\es,j}'-\tilde{\gamma}_{\es,j}')\Big|\nonumber\\
		& \leq \ 2 \|\eta\|_{\infn}\left|\gamma_{\es,j}'-\tilde{\gamma}_{\es,j}'\right|. 
\end{align*}
Therefore, using estimates (\ref{patch}) and~\pref{est:estimategamma}, there exists a constant $C_2(\lambda,\Lambda)$ such that 
$$ \left|\int_{\R^2} P_\es \,\eta\,d\mue - \int_{\R^2} \tilde{P}_\es \,\eta\,d\tilde{\mu}_\es  \right| \leq \es^{1/2}C_2(\lambda,\Lambda)\|\eta\|_{C^1(\o)},\h \forall\,\eta \in C^1_c(\R^2;\R^2),$$
and we conclude that $P=\tilde{P}$.
\bigskip


\begin{lemma}\label{bigestimate}
Let $\es>0$, $u_\e \in K$, $\mu_\e=\es|\nabla u_\e|$ and $\tilde{\alpha_\e},\ldots,\tilde{\mu_\e}$ as in (\ref{defatilde})-(\ref{tildemu}).  Then, for all $0<\lambda <1$ and for all $\eta\in C^1_c(\R^2;\R^2)$, we have
\begin{align}
		\ds\int_{\R^2} \tilde{P_\e}:\nabla \eta\,d\tilde{\mu_\e} &\leq \ds 
		\frac2{(1-\lambda)^2}\G_\e(u_\e)^{1/2}{\|\eta\|}_{L^2(\tilde \mu_\e)}\notag\\
		&+\ds\frac{2\es}{\lambda(1-\lambda)}\G_\e(u_\e){\|\eta\|}_{\infn}\notag\\
		&+\ds\frac{\es\sqrt 2}{(1-\lambda)}\Big(\es \sum_j L_{\e,j}\Big)^{1/2}\G_\e(u_\e)^{1/2}{\|\nabla\eta\|}_{\infn}\notag\\
		&+\ds\frac{2\es^2}{\lambda^2}\G_\e(u_\e){\|\nabla\eta\|}_{\infn}.
\end{align}	
\end{lemma}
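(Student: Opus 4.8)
The plan is to turn the left-hand side into a sum of one-dimensional integrals along the modified curves $\tilde\gamma_{\e,j}$, integrate by parts in arclength, and then estimate the resulting boundary and curvature contributions using the fundamental estimate~\pref{ineq:inequality} together with the bounds~\pref{est:pre-gammat},~\pref{smallmass} and~\pref{boundatilde} already available. First, by~\pref{tildemu},~\pref{defptilde} and~\pref{defgammap} the modified projection along $\tilde\gamma_{\e,j}$ is $\tilde P_\e(\tilde\gamma_{\e,j}(s))=\tilde\gamma_{\e,j}'(s)\otimes\tilde\gamma_{\e,j}'(s)$, so that for every $\eta\in C^1_c(\R^2;\R^2)$ one has $\tilde P_\e(\tilde\gamma_{\e,j}(s)):\nabla\eta(\tilde\gamma_{\e,j}(s))=\tilde\gamma_{\e,j}'(s)\cdot\frac{d}{ds}[\eta(\tilde\gamma_{\e,j}(s))]$. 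Dropping the index $\e$ and using the definition~\pref{tildemu} of $\tilde\mu_\e$,
\[
\int_{\R^2}\tilde P_\e:\nabla\eta\,d\tilde\mu_\e
=\sum_j\e\int_0^{L_j}\tilde\gamma_j'\cdot\frac{d}{ds}\bigl[\eta(\tilde\gamma_j)\bigr]\,ds
=\underbrace{\sum_j\e\bigl[\tilde\gamma_j'\cdot\eta(\tilde\gamma_j)\bigr]_0^{L_j}}_{=:B}-\underbrace{\sum_j\e\int_0^{L_j}\tilde\gamma_j''\cdot\eta(\tilde\gamma_j)\,ds}_{=:C}.
\]
I expect the boundary sum $B$ to produce the last two terms of the claim and the curvature sum $C$ the first two.

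For $B$: since $\sigma_\e$ is a permutation of $\{1,\dots,J_\e\}$, I re-index the contributions at $s=0$; by~\pref{alphaperiodic} the tangents match, $\tilde\gamma_j'(L_j)=\tilde\gamma_{\sigma_\e(j)}'(0)$, and by~\pref{initial} and~\pref{concatenation}, $\tilde\gamma_{\sigma_\e(j)}(0)=\gamma_{\sigma_\e(j)}(0)=\gamma_j(L_j)$ while $\tilde\gamma_j(0)=\gamma_j(0)$. Hence
\[
B=\sum_j\e\,\tilde\gamma_j'(L_j)\cdot\bigl[\eta(\tilde\gamma_j(L_j))-\eta(\gamma_j(L_j))\bigr],\qquad
|B|\le\e\|\nabla\eta\|_{\infn}\sum_j\int_0^{L_j}\bigl|\tilde\gamma_j'(s)-\gamma_j'(s)\bigr|\,ds,
\]
using $|\tilde\gamma_j'(L_j)|=1$ and $|\tilde\gamma_j(L_j)-\gamma_j(L_j)|\le\int_0^{L_j}|\tilde\gamma_j'-\gamma_j'|$. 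Plugging in~\pref{est:pre-gammat} then gives exactly the third and fourth terms of the inequality.

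For $C$: from~\pref{thetas}, $\tilde\gamma_j''=\tilde\alpha_j'\,\tilde\theta_j$, so $|\tilde\gamma_j''|=|\tilde\alpha_j'|$, and I split $[0,L_j]$ into $A_j^\lambda$ (see~\pref{ae}) and its complement. On $A_j^\lambda$ one has $\tilde\alpha_j'=\alpha_j'$ by~\pref{defatilde} and $M_j/\e\ge1-\lambda$; writing the integrand $|\alpha_j'|\,|\eta(\tilde\gamma_j)|$ as $(M_j/\e)^2|\alpha_j'|\cdot(\e/M_j)^2|\eta(\tilde\gamma_j)|$, applying Cauchy--Schwarz first in $s$ and then in $j$, bounding $(\e/M_j)^4\le(1-\lambda)^{-4}$ on $A_j^\lambda$, identifying $\sum_j\e\int_0^{L_j}|\eta(\tilde\gamma_j)|^2\,ds=\|\eta\|_{L^2(\tilde\mu_\e)}^2$, and invoking the last term of~\pref{ineq:inequality} (which, after discarding the factors $\sin\beta_j\le1$, yields $\tfrac14\sum_j\e\int_0^{L_j}(M_j/\e)^4\alpha_j'^2\,ds\le\G_\e(u_\e)$), I obtain the first term $\tfrac2{(1-\lambda)^2}\G_\e(u_\e)^{1/2}\|\eta\|_{L^2(\tilde\mu_\e)}$. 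On the complement, where $M_j/\e<1-\lambda$ up to a null set, I instead use $|\tilde\alpha_j'|\le2/(\e(1-\lambda))$ from~\pref{boundatilde} and $|\eta|\le\|\eta\|_{\infn}$, absorb the length of $[0,L_j]\setminus A_j^\lambda$ into a multiple of $\int(1-M_j/\e)^2$ by~\pref{smallmass}, and use the middle term of~\pref{ineq:inequality} ($\sum_j\int_0^{L_j}(M_j/\e-1)^2\,ds\le\e\,\G_\e(u_\e)$) to obtain the second term $\tfrac{2\e}{\lambda(1-\lambda)}\G_\e(u_\e)\|\eta\|_{\infn}$. Adding the four contributions proves the lemma.

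I expect the delicate point to be the treatment of $B$: one must exploit the concatenation/permutation structure provided by Lemma~\ref{approxi} so that the matching tangent directions cancel and only the positional drift $|\tilde\gamma_j(L_j)-\gamma_j(L_j)|$ remains, which is then controlled by~\pref{estimategammat} and~\pref{patch}. A secondary subtlety is the precise choice of the weights $(M_j/\e)^{\pm2}$ in the Cauchy--Schwarz splitting on $A_j^\lambda$, tuned so that the fourth-power curvature term of~\pref{ineq:inequality} becomes usable; everything else is bookkeeping.
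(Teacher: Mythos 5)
Your proposal is correct and follows essentially the same route as the paper: the same reduction to arclength integrals along $\tilde\gamma_{\e,j}$, the same integration by parts producing a curvature term and a boundary term, the same splitting over $A_j^\lambda$ and its complement with Proposition~\ref{prop:inequality}, and the same permutation/re-indexing trick for the boundary contribution controlled via~\pref{est:pre-gammat}. The only remark is that the mechanism you describe for the complement of $A_j^\lambda$ (squaring~\pref{smallmass}) actually yields $\lambda^2(1-\lambda)$ in the denominator of the second term rather than $\lambda(1-\lambda)$ --- exactly as in the paper's own proof, whose statement carries the same cosmetic mismatch; since only the first term survives the limit $\e\to0$ in later applications, this is immaterial.
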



\textit{Proof.} We again suppress the subscripts $\e$ for clarity. Write
\begin{align*}
\int_{\R^2} \tilde{P} : \nabla\eta\, d\tilde{\mu} 
  &= \sum_{j=1}^{J} \int_0^{L_j} \tilde{P}(\tilde{\gamma}_j): \nabla\eta(\tilde{\gamma}_j)\, \es\, ds
  \stackrel{\substack{\pref{defgammap},\\\pref{defptilde}}}= 
    \sum_{j=1}^{J}\int_0^{L_j} \tilde{\theta}_j^\bot \cdot(\nabla\eta(\tilde{\gamma}_j)\ 
          \tilde{\gamma}_j')\, \es\, ds\\
  &= \sum_{j=1}^{J}\int_0^{L_j}  \tilde{\theta}_j^\bot \cdot \frac{d}{ds} 
          \eta(\tilde{\gamma}_j)\, \es\, ds \\
  &= -\sum_{j=1}^{J}\int_0^{L_j}  (\tilde{\theta}_j')^\bot \cdot 
          \eta(\tilde{\gamma}_j)\, \es\, ds 
		+ \es \sum_{j=1}^{J}\left[ \tilde{\theta}_j^\bot \cdot \eta(\tilde{\gamma}_j)\right]_0^{L_j},
\end{align*}
and we rewrite this using (\ref{thetas}) as 
\begin{multline}
\label{est:three_estimates}
-\sum_{j=1}^{J}\int_0^{L_j} \!\! \big[\tilde{\alpha}_j'\tilde{\theta}_j \cdot \eta(\tilde{\gamma}_j)\big] \chi_{\left\{\frac{M_j}{\es}\geq 1-\lambda\right\}}\, \es\, ds\ 
		-\sum_{j=1}^{J}\int_0^{L_j}  \!\!\big[\tilde{\alpha}_j'\tilde{\theta}_j \cdot \eta(\tilde{\gamma}_j)\big] \chi_{\left\{\frac{M_j}{\es}< 1-\lambda\right\}}\, \es\, ds \\
{}+	\e\sum_{j=1}^{J} \left[ \tilde{\theta}_j^\bot \cdot \eta(\tilde{\gamma}_j) \right]_0^{L_j}.	
\end{multline}
%
Now we separately estimate the three parts of this expression.
\medskip


\textit{Estimate I}. 
Observe that as $s\in A_{j}^\lambda$ (defined in \pref{ae}), by (\ref{defatilde}) we have $|\tilde{\alpha}_j'(s)|=|\alpha_j'(s)|$.
Therefore, using~\pref{bigmass}, and taking a single curve $\tilde \gamma_j$ to start with, 
\begin{align*}
- \int_0^{L_j}\!\!\! \big[\tilde{\alpha}_j'\tilde{\theta}_j \cdot \eta(\tilde{\gamma}_j)\big] \chi_{\left\{\frac{M_j}{\es}\geq 1-\lambda\right\}}\, \es\, ds
		&\leq \int_0^{L_j} |\alpha_j'| \left(\frac{M_j(s)}{\es(1-\lambda)}\right)^2 |\eta(\tilde{\gamma}_j)| \, \es\, ds\\
		&\leq \left( \int_0^{L_j} |\alpha_j'|^2 \left(\frac{M_j(s)}{\es(1-\lambda)}\right)^4\, \es\, ds\right)^{1/2}\left( \int_0^{L_j} |\eta(\tilde{\gamma}_j)|^2\, \es\, ds\right)^{1/2}\\
		&\leq \frac2{(1-\lambda)^2}\left(\frac14 \int_0^{L_j}\!\! {|\alpha_j'|^2} \left(\frac{M_j(s)}{\es}\right)^4\!\! \es\, ds\right)^{1/2}\!\!\!\left( \int_0^{L_j}\! |\eta(\tilde{\gamma}_j)|^2\, \es\, ds\right)^{1/2}
\end{align*}
Now, re-doing this estimate while summing over all the curves, we find by Proposition~\ref{prop:inequality} 
\begin{equation*}
-\sum_j\int_0^{L_j}\!\! \big[\tilde{\alpha_j}'\tilde{\theta_j} \cdot \eta(\tilde{\gamma_j})\big] \chi_{\left\{\frac{M_j}{\es}\geq 1-\lambda\right\}}\, \es\, ds 
\leq \frac2{(1-\lambda)^2}
		\G_\e(u)^{1/2}{\|\eta\|}_{L^2(\tilde\mu)}.
\end{equation*}
\medskip


\textit{Estimate II}. Observe that as $s\in \R/A_{j}^\lambda$, by (\ref{boundatilde}) 
and 
\pref{smallmass},
\begin{multline*}
-\int_0^{L_j}\!\! \big[\tilde{\alpha}_j'\tilde{\theta}_j \cdot \eta(\tilde{\gamma}_j)\big] \chi_{\left\{\frac{M_j}{\es}< 1-\lambda\right\}}\, \es\, ds
  \leq \int_0^{L_j} \frac {2}{\es(1-\lambda)}\left(1-\frac{M_j(s)}{\es}\right)^2\frac1{\lambda^2} |\eta(\tilde{\gamma}_j)| \, \es\, ds \\
 \leq \frac{2\e}{\lambda^2(1-\lambda)} \;{\|\eta\|}_{\infn}\;
 \int_0^L \frac1{\es^2} \left(1-\frac{M(s)}{\es}\right)^2\, \es\, ds,
\end{multline*}
and summing as before we find
\begin{equation*}
-\sum_j\int_0^{L_j}\!\!\! \big[\tilde{\alpha_j}'\tilde{\theta_j} \cdot \eta(\tilde{\gamma_j})\big]
		\chi_{\left\{\frac{M_j}{\es}< 1-\lambda\right\}}\, \es\, ds \leq 
		\frac{2\e}{\lambda^2(1-\lambda)}\G_\e(u){\|\eta\|}_{\infn}.
\end{equation*}
\medskip


\textit{Estimate III}.
Write the last term in~\pref{est:three_estimates}  as
\[
\e\sum_{j=1}^{J} \left[ \tilde{\theta}_j^\bot \cdot \eta(\tilde{\gamma}_j) \right]_0^{L_j}
=\e\sum_{j=1}^J\Bigl[\tilde{\g}_j'(L_j)\cdot\eta(\tilde{\g}_j(L_j)) - \tilde{\g}_j'(0)\cdot\eta(\tilde{\g}_j(0))\Bigr].
\]
By Definition \ref{modified}, $\tilde{\g}_j(0)=\g_j(0)$ for every $j$, and using \pref{concatenation} and~\pref{alphaperiodic} we find 
\begin{equation*}
	\tilde{\g}_j'(0)\cdot\eta(\tilde\g_j(0))= \tilde{\g}_j'(0)\cdot\eta(\g_j(0)) 	= \tilde{\g}_{\sigma_\e^{-1}(j)}'(L_{\sigma_\e^{-1}(j)})\cdot \eta\bigl(\g_{\sigma_\e^{-1}(j)}(L_{\sigma_\e^{-1}(j)})\bigr),
\end{equation*}											
and therefore 
\[
\e\sum_{j=1}^{J} \left[ \tilde{\theta}_j^\bot \cdot \eta(\tilde{\gamma}_j) \right]_0^{L_j}
=\e\sum_{j=1}^{J_\e}\tilde{\g}_j'(L_j) \big[ \eta(\tilde{\gamma}_j(L_j))-\eta(\gamma_j(L_j)) \big].
\]
We estimate the difference in the right-hand side by
\begin{equation*}
		\big| \eta(\tilde{\gamma}_j(L_j))-\eta(\gamma_j(L_j)) \big|\leq {\|\nabla\eta\|}_{\infn}
		\big|\tilde{\gamma}_j(L_j) - \gamma_j(L_j)\big|.
\end{equation*}
Using
\begin{equation*}
		\tilde{\gamma}_j(L_j) -\gamma_j(L_j)=\tilde{\gamma}_j(0) -\gamma_j(0) +\int_0^{L_j} \big[\tilde{\gamma}_j'(s) -\gamma'_j(s) \big]\,ds,
\end{equation*}
by estimate~\pref{est:pre-gammat} we find
\begin{align*}
  \e \sum_{j=1}^J \left[ \tilde{\theta}_j^\bot \cdot \eta(\tilde{\gamma}_j) \right]_0^{L_j}
 &\leq \es{\|\nabla\eta\|}_{\infn}\sum_{j=1}^{J}\int_0^{L_j}\left|\tilde{\gamma}_j'(s) -\gamma'_j(s)\right|\, ds\\
 &\leq \e {\|\nabla\eta\|}_{\infn}\Biggl\{
    \frac{\sqrt{2}}{(1-\lambda)}\Big(\es \sum_j L_j\Big)^{1/2}\G_\es(u)^{1/2} +\frac{2\es}{\lambda^2}\G_\es(u)\Biggr\}.
\end{align*}
\rightline{$\Box$}


Define the divergence of a matrix $P=(P_{ij})$ as $(\div P)_i:=\sum_{i,j}\partial_{x_j}  P_{ij} $.
\begin{lemma}\label{lemma:divint}
		Let the sequence $\{\ue\} \subset K$ be such that $\G_\es(\ue)$ is bounded, and let $(\mu,P)$ be a weak limit for $(\mue,P_\es)$, with $\mu = \tfrac12 \L^2\llcorner \Omega$, as in (\ref{weakp}). Extend $P$ by zero outside of~$\Omega$. Then 
\begin{enumerate}
		\item\label{lemma:divint:div} $\div P \in L^2(\R^2;\R^2)$ ,
		\item\label{lemma:divint:b} $\ds \liminf_{\es \ra 0}\, \G_\es(\ue) \geq \frac 18 \int_\o \big|\div P(x)\big|^2 dx$.
\end{enumerate}	
\end{lemma}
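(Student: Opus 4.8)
\textit{Proof strategy.} The plan is to feed the differential estimate of Lemma~\ref{bigestimate} into the weak convergences collected in Lemma~\ref{lemmaconvergence}, pass to the limit, and then read off the statement about $\div P$ by a Riesz-representation argument. The ingredients I would use are: $(\tilde\mu_\es,\tilde P_\es)\weakto(\mu,P)$ with the \emph{same} limit $P$ as for $(\mue,P_\es)$, $\tilde\mu_\es\ras\mu$, and the boundedness of $\G_\es(\ue)$, from which $\es\sum_j L_{\es,j}=\mue(\o)\le\F_\es(\ue)=|\o|+\es^2\G_\es(\ue)$ is also bounded.

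First I would fix $\lambda\in(0,1)$ and $\eta\in C^1_c(\R^2;\R^2)$ and look at the inequality of Lemma~\ref{bigestimate} as $\es\to0$. Its left-hand side $\int_{\R^2}\tilde P_\es:\nabla\eta\,d\tilde\mu_\es$ converges to $\int_{\R^2}P:\nabla\eta\,d\mu$ (pair convergence, tested against $\nabla\eta\in C^0_c$), the factor $\|\eta\|_{L^2(\tilde\mu_\es)}$ converges to $\|\eta\|_{L^2(\mu)}$ (since $\tilde\mu_\es\ras\mu$ and $|\eta|^2\in C^0_c$), and the last three terms on the right each carry a positive power of $\es$ times a bounded quantity, hence tend to $0$. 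Since replacing $\eta$ by $-\eta$ leaves the right-hand side unchanged, taking the $\liminf$ in $\es$ yields
\begin{equation*}
\Bigl|\int_{\R^2}P:\nabla\eta\,d\mu\Bigr|\ \le\ \frac{2}{(1-\lambda)^2}\,\Bigl(\liminf_{\es\to0}\G_\es(\ue)\Bigr)^{1/2}\,\|\eta\|_{L^2(\mu)},
\end{equation*}
for every $\lambda\in(0,1)$; letting $\lambda\downarrow0$ removes the prefactor. (Here $\liminf_{\es\to0}\G_\es(\ue)<\infty$ because $\G_\es(\ue)$ is bounded by hypothesis.)

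Next I would insert $\mu=\tfrac12\L^2\llcorner\o$. Since $P$ is extended by zero outside $\o$, one has $\int_{\R^2}P:\nabla\eta\,d\mu=\tfrac12\int_{\R^2}P:\nabla\eta\,dx=-\tfrac12\langle\div P,\eta\rangle$ in the sense of distributions on $\R^2$, while $\|\eta\|_{L^2(\mu)}=\bigl(\tfrac12\int_\o|\eta|^2\bigr)^{1/2}\le\tfrac1{\sqrt2}\|\eta\|_{L^2(\R^2)}$; the previous inequality becomes
\begin{equation*}
\bigl|\langle\div P,\eta\rangle\bigr|\ \le\ 2\sqrt2\,\Bigl(\liminf_{\es\to0}\G_\es(\ue)\Bigr)^{1/2}\,\|\eta\|_{L^2(\R^2)}\qquad\text{for all }\eta\in C^1_c(\R^2;\R^2).
\end{equation*}
By density of $C^1_c$ in $L^2$ and the Riesz representation theorem this gives $\div P\in L^2(\R^2;\R^2)$ with $\int_{\R^2}|\div P|^2\le 8\liminf_{\es\to0}\G_\es(\ue)$ (claim~\ref{lemma:divint:div}). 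Finally, $P\equiv0$ on the open set $\R^2\setminus\overline{\o}$, so $\div P\equiv0$ there, and $\partial\o$ is Lebesgue-null, whence $\int_\o|\div P|^2=\int_{\R^2}|\div P|^2$; rearranging produces claim~\ref{lemma:divint:b}.

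The only genuinely delicate point is that the test functions must be allowed to range over all of $C^1_c(\R^2;\R^2)$ and not merely $C^1_c(\o;\R^2)$: this is legitimate precisely because the boundary term (``Estimate III'') in Lemma~\ref{bigestimate} was controlled by $O(\es)$, and it is exactly this that promotes the interior bound $\div P\in L^2_{\mathrm{loc}}(\o)$ to $\div P\in L^2(\R^2)$ for the zero-extension of $P$ --- i.e. it is where the boundary condition discussed after Definition~\ref{limprob} gets encoded. Beyond this, the argument is routine, the one subtlety being the bookkeeping of $\liminf$ versus $\lim$: the left-hand side of the estimate converges, so its $\liminf$ equals its limit, whereas on the right only a $\liminf$ is available.
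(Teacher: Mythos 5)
Your proposal is correct and follows essentially the same route as the paper's own proof: pass to the limit in the estimate of Lemma~\ref{bigestimate} using the weak convergence $(\tilde\mu_\es,\tilde P_\es)\weakto(\mu,P)$ and $\tilde\mu_\es\ras\mu$, observe that only the first right-hand term survives, and then read off $\div P\in L^2(\R^2;\R^2)$ and the constant $\tfrac18$ by duality after sending $\lambda\to0$. Your extra bookkeeping (the $\pm\eta$ trick, the Riesz step, and the remark that testing over $C^1_c(\R^2;\R^2)$ rather than $C^1_c(\o;\R^2)$ is what encodes the boundary condition) only makes explicit what the paper leaves implicit.
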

\medskip

\begin{proof}
Note that by Lemma~\ref{lemmaconvergence} the pair $(\mu,P)$ is also the weak limit of $(\tilde\mu_\e, \tilde P_\e)$. 
By Lemma \ref{bigestimate} we have for all $\lambda\in(0,1)$ and for all $\eta \in C^1_c(\R^2;\R^2)$,
\begin{align*}
 \frac12 \int_{\R^2} P(x):\nabla \eta(x)\,dx
  &=\int_{\R^2} P(x):\nabla \eta(x)\,d\mu(x) \\
  &= \lim_{\es \ra 0}\int_{\R^2} \tilde{P}_\es(x):\nabla \eta (x)\,d\tilde{\mu}_\es(x) \nonumber \\
	 	&\leq \liminf_{\es \ra 0} \frac2{(1-\lambda)^2}\G_\e(u_\e)^{1/2}\|\eta\|_{L^2(\tilde{\mu}_\es)} \\
  &= \frac{\sqrt 2}{(1-\lambda)^2}\,\|\eta\|_{L^2(\o)}\, \liminf_{\es \ra 0} \G_\e(u_\e)^{1/2}.
\end{align*}	
This implies that the divergence of $P$, in the sense of distributions on $\R^2$, is an $L^2$ function; by taking the limit $\lambda\to0$ the inequality in part~\ref{lemma:divint:b} of the Lemma follows. 
\end{proof}



\section{Strong convergence}
\label{sec:strongconvergence}
\subsection{An estimate for the tangents}

In this section we use the nonintersection property of $\partial\spt{\ue}$ and the inequality in Proposition~\ref{prop:inequality} to obtain the crucial bound on the orthogonal projections $P_\es$. The notation is rather involved, because we are dealing with a \textit{system} of curves and Proposition~\ref{prop:inequality} provides a bound only on the $L^2$-norms of $\tilde{\alpha}'_\es$, which \textit{approximate}, as $\es \ra 0$, the curvature of (a smooth approximation of) $\partial\spt{\ue}$. The underlying idea is that if the tangent lines to two nonintersecting curves are far from parallel, then either the supports of the curves are distant (Fig.~\ref{fig:pena}a) or curvature is large (Fig.~\ref{fig:pena}b). 
\begin{figure}[h]
	\begin{minipage}{7cm}
		\begin{center}
		\includegraphics{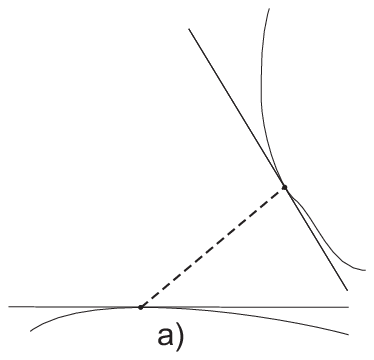}
		\end{center}
	\end{minipage}
	\begin{minipage}{7cm}
		\begin{center}
		\includegraphics[scale=1]{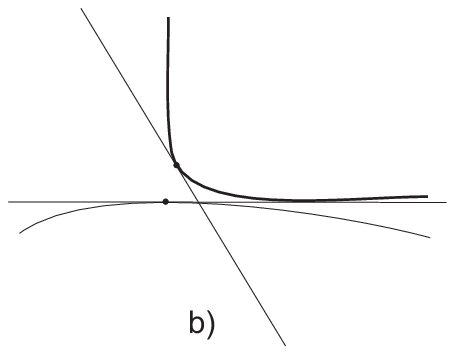}
		\end{center}
	\end{minipage}
	\caption{Curves with distant tangent lines.}
	\label{fig:pena}
\end{figure} 
In Proposition~\ref{esttg}, which  expresses this property, we also include a parameter $\ell>0$, representing  the length of curve on each side of the tangency point that is taken into account. This parameter will be optimized later in the argument. 

We make use of a family of approximations $\{\tilde{\gamma}_\e\}$, similar to the one in Definition \ref{modified}. The approximation is different because in this Section, instead of dividing closed curves into curves with bounded length, we directly exploit the periodicity of the curves in $\Gamma_\e$.
\begin{defi}\label{def:periodicmodified}
	We reparametrize $\Gamma_\e$ (see Lemma \ref{approxi}) as a finite and disjoint family of closed, simple, smooth curves 
	$$ \g_{\e,j}:\R/[0,L_{\e,j}] \ra \o,\quad j=1,\ldots J_\e,$$
	for some $J_\e \in \N$. Note that $L_{\e,j}$ may not be bounded, as $\e \ra 0$, and $\g_{\e,j}$ is $L_{\e,j}$-periodic.
	
	Let $\tilde{\alpha}_{\e,j}$, $\tilde{\theta}_{\e,j}$ be the functions defined in Def. \ref{modified}. According to the new parametrization of $\g$, property \pref{alphaperiodic} entails that $\tilde{\alpha}_{\e,j}$ is $L_{\e,j}$-periodic. We define $\tilde{\g}_{\e,j}$ to be the curve which satisfies
	\begin{eqnarray*}
		\tilde{\gamma}_{\e,j}(0) 	&=&	\gamma_{\e,j}(0),\\
		\tilde{\gamma}'_{\e,j}(s)	&=& \tilde{\theta}^\bot_{\e,j}(s)\quad \mbox{for all }s \in [0,L_{\e,j}].
\end{eqnarray*}	
Note that $\tilde{\g}_{\e,j}$ is not $L_{\e,j}$-periodic, since it may take different values in $s=0$ and in $s=L_{\e,j}$, nonetheless, by definition, $\tilde{\g}'_{\e,j}$ is $L_{\e,j}$-periodic.
\end{defi}
\begin{prop}\label{esttg}
		Let $\g_{\es,1},\ \g_{\es,2}$ be two curves as in Section \ref{definitions}, and let $P_\es,\ \tilde{\g}_{\es,j},\ \tilde{\alpha}_{\es,j},\ \beta_{\es,j},\ L_{\es,j}$, $j=1,2$, be the related quantities as in Sect. \ref{definitions} and Def. \ref{def:periodicmodified}. There exists a constant $C>0$ such that $\forall\, \es>0$, $\forall\, s_1,s_2 \in \R$, and $\forall\, \ell >0$, it holds:
		$$ |P_\es(\g_{\es,1}(s_1))-P_\es(\g_{\es,2}(s_2))|\quad \leq\quad \frac {C}{\ell} |\g_{\es,1}(s_1)- \g_{\es,2}(s_2)| \ \ +\hspace{4cm} $$
		$$ +\ \ C\sum_{j=1,2}\left(\ell^{1/2}\min\left\{\left(\int_{s_j-\ell}^{s_j+\ell} |\tilde{\alpha}_{\es,j}'(\sigma)|^2 d\sigma\right)^{1/2}, \left(2\int_0^{L_{\es,j}}|\tilde{\alpha}_{\es,j}'(\sigma)|^2 d\sigma\right)^{1/2}\right\} +\right.$$
		$$ \left. + \frac 1\ell \min\left\{ \int_{s_j-\ell}^{s_j+\ell} |\tilde{\g}'_{\e,j}(s)-\g'_{\e,j}(s)|\,d\sigma\ ,\ 2\!\int_0^{L_{\es,j}}\! |\tilde{\g}'_{\e,j}(s)-\g'_{\e,j}(s)|\,d\sigma \right\}+ |\tilde{\g}'_{\e,j}(s)-\g'_{\e,j}(s)| \right).$$
\end{prop}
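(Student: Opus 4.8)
The plan is to estimate the difference $|P_\es(\g_1(s_1))-P_\es(\g_2(s_2))|$ by interpolating through the \emph{modified} tangent directions $\tilde\theta_{\es,j}^\bot$, which unlike $P_\es$ along $\g_j$ vary in a controlled, almost-everywhere-Lipschitz way (with constant $O(1/\es)$). First I would write, for each $j=1,2$,
\[
P_\es(\g_j(s_j)) = \g_j'(s_j)\otimes\g_j'(s_j),
\qquad
\tilde P_\es(\tilde\g_j(s_j)) = \tilde\theta_j^\bot(s_j)\otimes\tilde\theta_j^\bot(s_j),
\]
and use that $|a\otimes a - b\otimes b|\le 2|a-b|$ for unit vectors, so that
\[
|P_\es(\g_j(s_j)) - \tilde P_\es(\tilde\g_j(s_j))| \le 2|\g_j'(s_j) - \tilde\g_j'(s_j)|,
\]
which accounts for the final, un-averaged term $|\tilde\g_j'(s_j)-\g_j'(s_j)|$ in the statement. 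Thus it remains to bound $|\tilde P_\es(\tilde\g_1(s_1)) - \tilde P_\es(\tilde\g_2(s_2))|$, i.e. $|\tilde\theta_1^\bot(s_1)\otimes\tilde\theta_1^\bot(s_1) - \tilde\theta_2^\bot(s_2)\otimes\tilde\theta_2^\bot(s_2)|$.

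The geometric heart of the argument (Figure~\ref{fig:pena}) is the following: the two modified tangent lines, the line $\tilde\theta_j^\bot(s_j)$ through $\tilde\g_j(s_j)$, are close in direction \emph{unless} the two base points are far apart or the curves are strongly curved. To make this quantitative I would introduce the averaged tangent directions
\[
\bar\theta_j := \dashint_{s_j-\ell}^{s_j+\ell} \tilde\theta_j^\bot(\sigma)\,d\sigma,
\]
and split
\[
|\tilde\theta_1^\bot(s_1)\otimes\tilde\theta_1^\bot(s_1) - \tilde\theta_2^\bot(s_2)\otimes\tilde\theta_2^\bot(s_2)|
\le |\tilde\theta_1^\bot(s_1) - \bar\theta_1| \cdot(\cdots) + |\bar\theta_1 - \bar\theta_2|\cdot(\cdots) + |\bar\theta_2 - \tilde\theta_2^\bot(s_2)|\cdot(\cdots),
\]
keeping in mind the uniform bound $|\tilde\theta_j^\bot|\le 1$. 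The first and third terms are controlled by the oscillation of $\tilde\theta_j^\bot$ over an interval of length $2\ell$: since $\frac{d}{ds}\tilde\theta_j^\bot = \tilde\alpha_j'\tilde\theta_j$, we have $|\tilde\theta_j^\bot(s_j)-\tilde\theta_j^\bot(\sigma)|\le\int|\tilde\alpha_j'|$, and by Cauchy--Schwarz $|\tilde\theta_j^\bot(s_j)-\bar\theta_j|\le \ell^{1/2}(\int_{s_j-\ell}^{s_j+\ell}|\tilde\alpha_j'|^2)^{1/2}$; the periodicity in Definition~\ref{def:periodicmodified} allows the alternative global bound $(2\int_0^{L_{\es,j}}|\tilde\alpha_j'|^2)^{1/2}$ when $\ell$ exceeds the period, which gives the $\min\{\cdot,\cdot\}$ in the statement. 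The middle term $|\bar\theta_1-\bar\theta_2|$ is the one that must pick up the geometric distance factor $\frac{C}{\ell}|\g_1(s_1)-\g_2(s_2)|$.

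To extract that distance factor I would use the non-intersection of the curves together with a comparison of the \emph{secant} directions of $\tilde\g_1$ and $\tilde\g_2$ to their average tangents. Writing $\tilde\g_j(s_j\pm\ell) - \tilde\g_j(s_j\mp\ell) = \int_{s_j-\ell}^{s_j+\ell}\tilde\theta_j^\bot(\sigma)\,d\sigma = 2\ell\,\bar\theta_j$, the vector $\bar\theta_j$ is, up to a factor $2\ell$, the secant direction of the modified curve between arclength parameters $s_j\mp\ell$. Because the \emph{original} curves $\g_1,\g_2$ do not intersect (on open arcs), two sufficiently long sub-arcs with nearly opposite secant directions would have to cross — here one compares the positions $\tilde\g_j(s_j)$, which differ from $\g_j(s_j)$ by at most $\int|\tilde\g_j'-\g_j'|$ (hence the averaged correction terms $\frac1\ell\min\{\int_{s_j-\ell}^{s_j+\ell}|\tilde\g_j'-\g_j'|,\;2\int_0^{L_{\es,j}}|\tilde\g_j'-\g_j'|\}$ appearing in the bound, where the $1/\ell$ is the Lipschitz factor coming from inverting a secant relation of length $\sim\ell$). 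Carrying out this elementary planar geometry — two non-crossing arcs, each well-approximated by a straight segment of length $\sim\ell$, at distance $|\g_1(s_1)-\g_2(s_2)|$ apart — yields $|\bar\theta_1-\bar\theta_2| \le \frac{C}{\ell}|\g_1(s_1)-\g_2(s_2)| + (\text{oscillation terms}) + (\text{secant-correction terms})$, and assembling the three pieces gives the claimed inequality with a universal constant $C$.

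The main obstacle I anticipate is precisely this last geometric step: turning "the original curves don't cross" into the quantitative estimate on $|\bar\theta_1-\bar\theta_2|$. One has to be careful that the curvature of $\tilde\g_j$ over the window $[s_j-\ell,s_j+\ell]$ is small enough that the sub-arc genuinely stays within $O(\ell\cdot\text{oscillation})$ of the straight segment $\{\tilde\g_j(s_j)+t\bar\theta_j\}$, and that the distance between the two straight segments is comparable to $|\g_1(s_1)-\g_2(s_2)|$ after absorbing the $|\tilde\g_j-\g_j|$ corrections; the non-intersection then forbids the segments from being anti-parallel at small separation, which is what bounds $|\bar\theta_1-\bar\theta_2|$ from above by $C|\g_1(s_1)-\g_2(s_2)|/\ell$. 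Everything else — the $a\otimes a$ identity, Cauchy--Schwarz, the periodic-versus-local $\min$, and the bookkeeping of the correction terms — is routine.
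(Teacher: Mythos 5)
Your overall architecture --- pass to the modified tangents, average over a window of length $\ell$, use Cauchy--Schwarz for the local oscillation and periodicity for the global alternative, and invoke non-intersection for the cross term --- parallels the paper's decomposition into Lemmas \ref{twocurves}, \ref{onecurve}, Corollary \ref{corperiod} and Lemma \ref{approximation}, but there is a genuine sign gap that makes your middle term unbounded in precisely the relevant configuration. The correct starting point is
\[
|P_\es(\g_1(s_1))-P_\es(\g_2(s_2))| \le 2\sqrt2\,\min\bigl\{|\g_1'(s_1)-\g_2'(s_2)|,\ |\g_1'(s_1)+\g_2'(s_2)|\bigr\},
\]
and the paper immediately replaces this minimum by the wedge product $|\g_1'(s_1)\times\g_2'(s_2)|$ via \pref{equivalence}, which is invariant under $\g_2'\mapsto-\g_2'$. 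You keep only the branch $|a-b|$ and telescope through $|\bar\theta_1-\bar\theta_2|$. But for two nearby, nearly parallel, non-intersecting arcs traversed in opposite orientations --- the generic situation for adjacent interfaces of $\partial\supp u_\e$, and unavoidable here --- one has $\bar\theta_1\approx-\bar\theta_2$, so $|\bar\theta_1-\bar\theta_2|\approx 2$ while $|\g_1(s_1)-\g_2(s_2)|$ and all curvature integrals are arbitrarily small; your claimed estimate on the middle term is therefore false, even though the left-hand side $|P_1-P_2|$ is small there. Any repair must carry the minimum (equivalently the wedge product) through the entire argument, which is incompatible with a plain triangle-inequality telescoping of vector differences.

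The second, larger gap is that the geometric heart of the proposition --- converting ``the curves do not cross'' into the quantitative bound $\ell\,|\g_1'\times\g_2'|\le C\bigl(|\g_1(s_1)-\g_2(s_2)|+\text{deviation of each arc from its tangent line}\bigr)$ --- is exactly the step you defer (``carrying out this elementary planar geometry yields\ldots''). This is Lemma \ref{twocurves} and it is not routine: one shows that if $\ell|\g_1'\times\g_2'|>|\g_1(0)-\g_2(0)|$ the two tangent segments must cross internally, translates them by $\pm\delta$ along their normals to build a parallelogram, and invokes the Jordan curve theorem to force an intersection of the original arcs unless one of them deviates from its tangent segment by at least half the margin $\bar d$; a lower bound on $\bar d$ in terms of $\ell|\g_1'\times\g_2'|-|\g_1(0)-\g_2(0)|$ then closes the estimate. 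Without this argument, and without the wedge-product formulation on which it rests, the proposal does not establish the proposition; the remaining ingredients you describe (the $\tfrac23\ell^{3/2}(\int|\g''|^2)^{1/2}$ tangent-line deviation, its periodic variant, and the transfer from $\g$ to $\tilde\g$ with the $\int|\tilde\g'-\g'|$ corrections) do match the paper's Lemmas \ref{onecurve}--\ref{approximation}.
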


\textit{Proof.}\ For sake of notation, we drop the index $\es$ throughout this whole section. First of all, note that since $P(\g_j)=\g'_j \otimes \g'_j$, it holds 
$$|P(\g_1(s_1))-P(\g_2(s_2))| \leq 2\sqrt{2} \min\bigl\{|\g'_1(s_1)-\g'_2(s_2)|, |\g'_1(s_1)+\g'_2(s_2)|\bigr\},$$ 
moreover, $\forall\, a,b \in S^1$ we have
\begin{equation}\label{equivalence}
	\sqrt{2}|b \times a|\geq \min\{ |b-a| , |b+a|\}  \geq |b \times a|,
\end{equation}	
where `$\times$' denotes the wedge product, i.e.
$$ a \times b = \mbox{det} \left( \begin{array}{cc}
																a_1 & b_1\\
																a_2& b_2\\
													 \end{array} \right) = |a||b|\sin \theta			$$ 
where $\theta$ is the angle between $a=(a_1,a_2)$ and $b=(b_1,b_2)$.
Thus, by~(\ref{equivalence}), it is sufficient to estimate $|\g'_1(s_1)\times\g'_2(s_2)|$.

We divide the proof of this proposition into three lemmas. First we estimate the difference between the tangents of two nonintersecting curves in terms of the curve-tangent distance and of the curve-curve distance (Lemma \ref{twocurves}). Then we estimate the deviation of a curve $\g$ from its tangent line in the point $\g(s)$ in terms of its curvature (Lemma \ref{onecurve}). Finally, in Lemma \ref{approximation}, we express the estimate just obtained in terms of the approximating curves $\tilde{\g}$ defined in Definition \ref{def:periodicmodified}. The case when the tangents lie on the same curve is then straightforward (Remark \ref{remone}). These estimates depend explicitly on a real parameter $\ell$ which can be thought as the length of the stretch of curve we are using for computing the curvature. In order to prove Proposition \ref{prop:strongconv} below, we will take the limit as $\ell \ra 0$, but since we have no lower bound for the length $L$ of a curve, in this step we have to take into account also the possibility $\ell > L$ (Corollary \ref{corperiod}).

\begin{lemma}\label{twocurves}
		Let $\g_i:\R \ra \R^2,\ i=1,2$ be two smooth curves, parametrized by arclength (i.e. $|\g_1'|\equiv|\g_2'|\equiv 1$), and such that $\g_1(\R)\cap \g_2(\R)=\emptyset$. Then there exists a constant $C>0$ such that $\forall\, \ell>0 $ and $\forall\, (s,t) \in \R^2$ it holds:
\begin{align*}
	\ell|\g_1'(s)\times \g_2'(t)| \leq\  &C\left( |\g_1(s)- \g_2(t)|+ \max_{\sigma\in [s-\ell,s+\ell]}|\g_1(s) + \sigma \gamma_1'(s) - \g_1(s+\sigma)| + \right.\\
	& \left. +\max_{\tau\in [t-\ell,t+\ell]}|\g_2(t) + \tau \g_2'(t) - \g_2(t+\tau)|\right).
	\end{align*}
\end{lemma}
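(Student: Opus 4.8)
The plan is to reduce everything to the two-dimensional picture near the point where the two tangent lines meet, and to convert a large value of $|\gamma_1'(s)\times\gamma_2'(t)|$ into an actual crossing of the two curves, contradicting $\gamma_1(\R)\cap\gamma_2(\R)=\emptyset$. Write $a:=\gamma_1(s)$, $u:=\gamma_1'(s)$, $b:=\gamma_2(t)$, $v:=\gamma_2'(t)$; let $\theta\in[0,\pi/2]$ be the (unsigned) angle between the lines $\R u$ and $\R v$, so that $|\gamma_1'(s)\times\gamma_2'(t)|=\sin\theta$; and abbreviate $D:=|a-b|$ together with the two maxima $\delta_1$, $\delta_2$ appearing on the right-hand side of the asserted inequality (the deviations of $\gamma_1$, $\gamma_2$ from their tangent lines over an arclength window of radius $\ell$). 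If the tangent lines are parallel the inequality is trivial, so assume they meet in a single point $p=a+\sigma_0u=b+\tau_0v$; writing $a-b=\tau_0v-\sigma_0u$ and projecting onto $u^\perp$ and $v^\perp$ gives the elementary bounds $|\sigma_0|\sin\theta\le D$ and $|\tau_0|\sin\theta\le D$.

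Now I split into three cases. If $D\ge\tfrac14\ell\sin\theta$, then $\ell\sin\theta\le 4D$ and we are done; if $\ell\sin\theta\le 10(\delta_1+\delta_2)$, we are done as well. In the remaining case $D<\tfrac14\ell\sin\theta$ and $\ell\sin\theta>10(\delta_1+\delta_2)$, whence $|\sigma_0|,|\tau_0|<\ell/4$, so that $s+[\sigma_0-\tfrac\ell2,\sigma_0+\tfrac\ell2]\subset s+[-\ell,\ell]$ and likewise for $\gamma_2$; on these windows the deviations are controlled by $\delta_1$, $\delta_2$. Pick an orthonormal frame with origin $p$, first axis $u$ and $v=(\cos\theta,\sin\theta)$; then $\gamma_1(s+\sigma)=(\sigma-\sigma_0,0)+e_1(\sigma)$ and $\gamma_2(t+\tau)=(\tau-\tau_0)(\cos\theta,\sin\theta)+e_2(\tau)$ with $|e_1|\le\delta_1$, $|e_2|\le\delta_2$ on the respective windows. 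Using $\ell\sin\theta>10(\delta_1+\delta_2)$ one checks that the endpoints $\gamma_1(s+\sigma_0\pm\tfrac\ell2)$ have first coordinate of modulus $>\tfrac\ell2-\delta_1$ with opposite signs; that the endpoints $\gamma_2(t+\tau_0\pm\tfrac\ell2)$ have second coordinate of modulus $>\tfrac\ell2\sin\theta-\delta_2>\delta_1$ with opposite signs; and that on any sub-arc of $\gamma_2$ along which the second coordinate stays in $[-\delta_1,\delta_1]$ the first coordinate stays in $[-w,w]$, where $w:=2(\delta_1+\delta_2)/\sin\theta$ satisfies $w<\ell/3$ and $\tfrac\ell2-\delta_1>w$. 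Extracting from $\gamma_1$ a sub-arc lying in the box $[-w,w]\times[-\delta_1,\delta_1]$ and joining its two vertical sides, and from $\gamma_2$ a sub-arc lying in the same box and joining its two horizontal sides, the standard fact that two such arcs in a rectangle must intersect (a consequence of the Jordan curve theorem) produces a point of $\gamma_1(\R)\cap\gamma_2(\R)$, which is impossible. Putting the three cases together yields the inequality with $C=10$, or any fixed admissible constant.

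The only delicate point is the third case: one must choose the parameter windows and the box $[-w,w]\times[-\delta_1,\delta_1]$ so that, under the quantitative hypothesis $\ell\sin\theta\gg D+\delta_1+\delta_2$, each curve genuinely traverses the box in the right direction, after which the crossing lemma does the work; everything else is bookkeeping with $|\sigma_0|\sin\theta\le D$, $|\tau_0|\sin\theta\le D$ and the bounds on $e_1$, $e_2$. This estimate is the geometric heart of Proposition~\ref{esttg}: afterwards Lemma~\ref{onecurve} will bound the deviations $\delta_i$ in terms of the curvature (that is, of $\int|\tilde{\alpha}'|^2$ over the window), Lemma~\ref{approximation} will pass from $\gamma$ to the modified curves $\tilde\gamma$ of Definition~\ref{def:periodicmodified}, and Corollary~\ref{corperiod} will cover the case $\ell>L$ by exploiting periodicity.
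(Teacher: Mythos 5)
Your proof is correct and follows essentially the same strategy as the paper's: both convert $\ell|\g_1'(s)\times\g_2'(t)|>|\g_1(s)-\g_2(t)|$ into a transversal crossing of the two tangent segments and then use a Jordan-curve-theorem crossing argument in a small quadrilateral (your rectangle in coordinates adapted to the intersection point, the paper's parallelogram of translated segments) to force the curves themselves to intersect, contradicting the hypothesis. The only difference is bookkeeping: you make the constants explicit via a three-case split where the paper bounds the endpoint-to-segment distance $\bar d$ from below by $\ell|\g_1'\times\g_2'|-|\g_1(0)-\g_2(0)|$.
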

\begin{lemma}\label{onecurve}
	Let $\ell>0$, $\g:\R\ra \R^2$ be a smooth curve, then
	$$ \max_{\sigma\in [s,s+\ell]}|\g(s)+\sigma\g'(s) - \g(s+\sigma)|\leq \frac 23 \ell^{3/2}\left(\int_{s}^{s+\ell} |\g''(\sigma)|^2d\sigma\right)^{1/2}.$$	
\end{lemma}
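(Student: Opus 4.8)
The plan is to read the left-hand quantity as the remainder in a first-order Taylor expansion of $\g$ at $\g(s)$, write that remainder in integral form, and then apply the Cauchy--Schwarz inequality twice; the two applications are exactly what produces the constant $2/3$.

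Concretely, I would fix $s$ and, for the increment $\sigma\in[0,\ell]$, set $f(\sigma):=\g(s+\sigma)-\g(s)-\sigma\,\g'(s)$, the deviation of the curve from its tangent line at $\g(s)$. Then $f(0)=0$ and $f'(\sigma)=\g'(s+\sigma)-\g'(s)=\int_s^{s+\sigma}\g''(r)\,dr$. A first Cauchy--Schwarz on $[s,s+\sigma]$, together with $\sigma\le\ell$, gives
\[
|f'(\sigma)|\ \le\ \int_s^{s+\sigma}|\g''(r)|\,dr\ \le\ \sigma^{1/2}\Bigl(\int_s^{s+\sigma}|\g''|^2\Bigr)^{1/2}\ \le\ \sigma^{1/2}\Bigl(\int_s^{s+\ell}|\g''|^2\Bigr)^{1/2}.
\]
A second integration, using $f(0)=0$ and $\int_0^\sigma r^{1/2}\,dr=\tfrac23\sigma^{3/2}$, yields
\[
|f(\sigma)|\ \le\ \int_0^\sigma|f'(r)|\,dr\ \le\ \frac23\,\sigma^{3/2}\Bigl(\int_s^{s+\ell}|\g''|^2\Bigr)^{1/2}\ \le\ \frac23\,\ell^{3/2}\Bigl(\int_s^{s+\ell}|\g''|^2\Bigr)^{1/2},
\]
and taking the supremum over $\sigma\in[0,\ell]$ is the assertion. (Alternatively, writing $f(\sigma)=\int_0^\sigma(\sigma-r)\g''(s+r)\,dr$ and using Cauchy--Schwarz once gives the sharper constant $1/\sqrt3<2/3$, so the stated bound certainly holds.)

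I do not expect any real obstacle: this is a one-dimensional calculus estimate, and smoothness of $\g$ makes both the fundamental theorem of calculus and the integral remainder immediate (mere $\g\in W^{2,2}_{\mathrm{loc}}$ would do, with $f$ read as absolutely continuous). The only point worth noting is that enlarging the domain of the $L^2$-norm from $[s,s+\sigma]$ to $[s,s+\ell]$ is what makes the bound uniform in $\sigma$, at the negligible cost of a non-sharp constant. This lemma is then to be combined with Lemma~\ref{twocurves} inside Lemma~\ref{approximation}, where $\g''$ is replaced by the $L^2$-bounded curvature proxy $\tilde\alpha'_\es$, on the way to Proposition~\ref{esttg}.
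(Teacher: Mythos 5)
Your proof is correct and is essentially identical to the paper's: both write the deviation from the tangent line via the fundamental theorem of calculus as a double integral of $|\g''|$, apply Cauchy--Schwarz to the inner integral, and integrate $\sigma^{1/2}$ to produce the factor $\tfrac23\ell^{3/2}$. Your parenthetical remark that the integral-remainder form yields the sharper constant $1/\sqrt3$ is a correct aside, not needed for the statement.
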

\begin{cor}\label{corperiod}
Let $\ell>0$, $L>0$, $\g:\R\ra \R^2$ be a smooth curve, such that $\g'$ is $L$-periodic, then
	$$ \max_{\sigma\in [s,s+\ell]}|\g(s)+\sigma\g'(s) - \g(s+\sigma)|\leq \frac 23 \ell^{3/2} \min\left\{\left(\int_{s}^{s+\ell} |\g''(\sigma)|^2d\sigma\right)^{1/2}, \left(\int_{0}^L |\g''(\sigma)|^2d\sigma\right)^{1/2} \right\}.$$	
\end{cor}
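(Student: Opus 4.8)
\textit{Proof plan.} The bound by the first entry of the minimum, $\frac23\ell^{3/2}\bigl(\int_s^{s+\ell}|\g''|^2\bigr)^{1/2}$, is exactly Lemma~\ref{onecurve} and needs nothing new. Hence the entire content of the corollary is the companion estimate $\max_{\sigma}|\g(s)+\sigma\g'(s)-\g(s+\sigma)|\le\frac23\ell^{3/2}\bigl(\int_0^L|\g''|^2\bigr)^{1/2}$; once this is available, taking the smaller of the two bounds gives the statement. I would split the argument according to whether $\ell\le L$ or $\ell>L$.

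First I would dispose of the case $\ell\le L$. Since $\g'$ is $L$-periodic, $|\g''|^2$ is a nonnegative $L$-periodic function, so any interval of length $\ell\le L$ carries no more mass than a full period: $\int_s^{s+\ell}|\g''|^2\le\int_s^{s+L}|\g''|^2=\int_0^L|\g''|^2$. Inserting this into Lemma~\ref{onecurve} yields the companion estimate at once.

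The case $\ell>L$ is where periodicity is used in an essential way. The key observation is that $L$-periodicity of $\g'$ forces $\g''$ to have vanishing mean over each period: $\int_0^L\g''(s+r)\,dr=\g'(s+L)-\g'(s)=0$. Therefore, for an arbitrary shift $\tau\ge0$, writing $\tau=mL+\rho$ with $\rho\in[0,L)$ and using $L$-periodicity of $\g'$, one has both $\g'(s+\tau)-\g'(s)=\int_0^\rho\g''(s+r)\,dr$ and, by the zero-mean identity, $\g'(s+\tau)-\g'(s)=-\int_\rho^L\g''(s+r)\,dr$; combining these gives the $\tau$-independent estimate $|\g'(s+\tau)-\g'(s)|\le\frac12\int_0^L|\g''(s+r)|\,dr\le\frac12 L^{1/2}\bigl(\int_0^L|\g''|^2\bigr)^{1/2}$, by Cauchy--Schwarz and $L$-periodicity of $|\g''|^2$. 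Writing $\g(s+\sigma)-\g(s)-\sigma\g'(s)=\int_0^\sigma\bigl(\g'(s+\tau)-\g'(s)\bigr)\,d\tau$ and integrating this bound over $\sigma$ in the relevant interval of length $\ell$ gives $|\g(s+\sigma)-\g(s)-\sigma\g'(s)|\le\frac12\sigma L^{1/2}\bigl(\int_0^L|\g''|^2\bigr)^{1/2}$; since $\sigma\le\ell$ and $L<\ell$ we have $\sigma L^{1/2}\le\ell^{3/2}$, so the right-hand side is at most $\frac12\ell^{3/2}\bigl(\int_0^L|\g''|^2\bigr)^{1/2}\le\frac23\ell^{3/2}\bigl(\int_0^L|\g''|^2\bigr)^{1/2}$, and taking the maximum over $\sigma$ finishes this case.

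I do not expect a genuine obstacle: the only thing that could go wrong is an unbounded growth of $|\g'(s+\tau)-\g'(s)|$ as $\tau$ exceeds $L$, and that is precisely neutralized by the zero-mean property of $\g''$ that periodicity of $\g'$ supplies; without it one would only get a bound degrading like $(\ell/L)^{1/2}$, which is insufficient. A cosmetic point is the constant --- the periodic argument in fact produces $\tfrac12$ rather than $\tfrac23$, so there is room to spare --- and, as usual, the parameter $\ell$ (the length of arc used to measure curvature) should be kept in mind since it is sent to $0$ later in the paper, though for this corollary it is merely a fixed positive number.
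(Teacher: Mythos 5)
Your proof is correct, and in the case $\ell>L$ it takes a genuinely different route from the paper. The paper argues by induction on the number of periods: it applies Lemma~\ref{onecurve} on each period $[nL,(n+1)L]$ (using $\g'(nL)=\g'(0)$), and then absorbs the accumulated bound $\frac23\bigl[(nL)^{3/2}+(s-nL)^{3/2}\bigr]$ into $\frac23 s^{3/2}$ via superadditivity of $t\mapsto t^{3/2}$. You instead exploit the zero mean of $\g''$ over a period, $\int_0^L\g''(s+r)\,dr=\g'(s+L)-\g'(s)=0$, to get the $\tau$-uniform bound $|\g'(s+\tau)-\g'(s)|\le\frac12 L^{1/2}\bigl(\int_0^L|\g''|^2\bigr)^{1/2}$, and integrate once in $\sigma$. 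Your argument buys a better constant ($\frac12$ in place of $\frac23$) and in fact a sharper growth rate $\sigma L^{1/2}\le\ell L^{1/2}$ rather than $\ell^{3/2}$ when $\ell\gg L$, while the paper's induction has the merit of being a pure iteration of Lemma~\ref{onecurve} with no new mechanism. Both arguments use only periodicity of $\g'$ (never of $\g$ itself), which is what the subsequent application to the modified curves $\tilde\g$ requires; and your handling of the case $\ell\le L$, reducing $\int_s^{s+\ell}|\g''|^2$ to $\int_0^L|\g''|^2$ by periodicity, matches the paper's.
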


Remark that we only assume that $\g'$ is periodic (and not $\g$) since we need to apply this corollary to the approximating curves $\tilde{\g}$.
\begin{lemma}\label{approximation}
Let $\g,\ \tilde{\g}, \ \tilde{\alpha},\ \beta,$ be as in Definition \ref{def:periodicmodified}, then $\forall\, s\in \R ,\ \forall\, \ell >0$ it holds:

$$ \max_{\sigma \in [s,s+\ell]}|\g(s)+\sigma\g'(s)-\g(s+\sigma)|\leq \frac 23 \ell^{3/2}\min\left\{ \int_{s}^{s+\ell}|\tilde{\alpha}'(\sigma)|^2d\sigma\, , \, \int_{0}^{L}|\tilde{\alpha}'(\sigma)|^2d\sigma \right\}^{1/2} +  $$
$$ + \min\left\{\int_{s}^{s+\ell}|\tilde{\g}'(s)-\g'(s)|\,d\sigma\, , \, \int_{0}^{L}|\tilde{\g}'(s)-\g'(s)|\,d\sigma\right\} + \ell|\tilde{\g}'(s)-\g'(s)| .$$
\end{lemma}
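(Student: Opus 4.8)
The plan is to reduce this estimate to Corollary~\ref{corperiod} by comparing $\g$ with its regularisation $\tilde\g$. Put $\rho:=\g-\tilde\g$, so that $\rho'=\g'-\tilde\g'$. Recall from Definition~\ref{def:periodicmodified} and~\pref{thetas} that $\tilde\g'=\tilde\theta^\bot$ is $L$-periodic and that $\tfrac{d}{ds}\tilde\theta^\bot=\tilde\alpha'\tilde\theta$, so $\tilde\g''=\tilde\alpha'\tilde\theta$ and $|\tilde\g''|=|\tilde\alpha'|$ almost everywhere; recall also that $\g$, being one of the closed curves making up $\Gamma_\e$, is $L$-periodic, hence so are $\g'$ and $\rho'$.

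The first step is the splitting, valid for every $\sigma$,
\[
\g(s)+\sigma\g'(s)-\g(s+\sigma)=\bigl[\tilde\g(s)+\sigma\tilde\g'(s)-\tilde\g(s+\sigma)\bigr]+\bigl[\rho(s)+\sigma\rho'(s)-\rho(s+\sigma)\bigr],
\]
after which I would bound the two brackets separately. To the first I apply Corollary~\ref{corperiod} with $\g$ replaced by $\tilde\g$ (legitimate because $\tilde\g'$ is $L$-periodic); using $|\tilde\g''|=|\tilde\alpha'|$ this gives precisely $\tfrac23\ell^{3/2}\min\bigl\{\int_s^{s+\ell}|\tilde\alpha'|^2,\int_0^L|\tilde\alpha'|^2\bigr\}^{1/2}$. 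For the second bracket I use the elementary identity $\rho(s)+\sigma\rho'(s)-\rho(s+\sigma)=\int_s^{s+\sigma}\bigl(\rho'(s)-\rho'(\tau)\bigr)\,d\tau$ and the triangle inequality, obtaining $\bigl|\rho(s)+\sigma\rho'(s)-\rho(s+\sigma)\bigr|\le\sigma|\rho'(s)|+\int_s^{s+\sigma}|\rho'(\tau)|\,d\tau\le\ell|\rho'(s)|+\int_s^{s+\ell}|\rho'(\tau)|\,d\tau$; and since $|\rho'|$ is $L$-periodic, the integral of $|\rho'|$ over any interval of length at most $L$ is bounded by $\int_0^L|\rho'|$, which supplies the alternative $\int_0^L$ inside the minimum. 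Adding the two bounds reproduces the right-hand side of the Lemma, the closing term $\ell|\tilde\g'(s)-\g'(s)|$ being exactly the $\ell|\rho'(s)|$ just produced.

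The point requiring genuine care is the regime $\ell>L$, where the relevant range of $\sigma$ exceeds a full period: then $\int_s^{s+\sigma}|\rho'|$ need no longer be dominated by $\int_0^L|\rho'|$, and $\tilde\g(s+\sigma)$ leaves one fundamental domain. I would treat this exactly as in the proof of Corollary~\ref{corperiod}: write $\sigma=kL+r$ with $k\in\N$, $0\le r<L$, use the $L$-periodicity of $\g$ and of $\tilde\g'$ to pass from offset $\sigma$ to offset $r$ at the price of a linear-in-$k$ drift, and absorb that drift — which is at most of size $\ell$ times an average of $|\rho'|$, respectively $|\tilde\alpha'|$ — into the $\ell^{3/2}$ term, using Cauchy--Schwarz together with the inequality $L^{1/2}\le\ell^{1/2}$ that holds precisely because $\ell\ge L$. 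This bookkeeping, rather than any single inequality, is what I expect to be the main obstacle; the rest of the proof is a short computation.
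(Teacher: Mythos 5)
Your decomposition $\g=\tilde\g+\rho$ with $\rho(s)+\sigma\rho'(s)-\rho(s+\sigma)=\int_s^{s+\sigma}\bigl(\rho'(s)-\rho'(\tau)\bigr)\,d\tau$ is correct and, for increments $\sigma\le L$, gives a complete proof that is essentially the paper's (the paper also splits off the $\tilde\g$-part, applies Corollary~\ref{corperiod} to it via $|\tilde\g''|=|\tilde\alpha'|$, and controls the remainder by $\int|\g'-\tilde\g'|$ over an interval of length at most $\min\{\ell,L\}$). The problem is the regime you yourself flagged, $\sigma>L$, and there your proposed repair does not work. Writing $\sigma=kL+r$ and using the $L$-periodicity of $\rho'$ gives
\[
\rho(s)+\sigma\rho'(s)-\rho(s+\sigma)=\Bigl[\rho(s)+r\rho'(s)-\rho(s+r)\Bigr]+kL\,\rho'(s)-k\int_0^L\rho'(\tau)\,d\tau .
\]
The term $kL\,\rho'(s)\le\ell|\rho'(s)|$ is harmless, but the drift $k\int_0^L\rho'=-k\bigl(\tilde\g(L)-\tilde\g(0)\bigr)$ is not: $\tilde\g$ is not closed, $|\tilde\g(L)-\tilde\g(0)|$ can be of the same order as $\int_0^L|\rho'|$, and $k$ can be as large as $\ell/L$, so this term exceeds the allowed budget $\int_0^L|\rho'|$ by an unbounded factor. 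It cannot be ``absorbed into the $\ell^{3/2}$ term'': that term is controlled by $\int_0^L|\tilde\alpha'|^2$, which measures the bending of $\tilde\g$ and is logically independent of $\int_0^L|\g'-\tilde\g'|$ --- take $\tilde\alpha'\equiv0$ (a straight $\tilde\g$, admissible in Definition~\ref{modified} when $A^\lambda$ is empty) and a genuinely curved closed $\g$; then the curvature term vanishes while the drift does not, so no Cauchy--Schwarz can bridge the two.

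The mechanism the paper uses instead, and which is absent from your argument, is a re-anchoring of the comparison curve. It introduces the family $\tilde\g_t$ with $\tilde\g_t(t)=\g(t)$ and $\tilde\g_t'\equiv\tilde\theta^\perp$, and for given $s$ and increment $r$ chooses $t=s+mL$ with $0<(s+r)-t<L$. Periodicity gives $\g(t)=\g(s)$ and $\g'(t)=\g'(s)$, Corollary~\ref{corperiod} handles $\tilde\g_t$, and the error $|\g(s+r)-\tilde\g_t(s+r)|\le\int_t^{s+r}|\g'-\tilde\g'|$ now accumulates only over a stretch of length less than one period, so the $\int_0^L$ alternative in the minimum comes out with no extra factor and the drift never appears. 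If you want to keep your $\rho$-formulation, you must likewise anchor the second-order difference of $\rho$ at a point within distance $L$ of the endpoint $s+\sigma$, rather than work throughout with the single fixed $\tilde\g$ anchored at $0$.
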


\textit{Proof of Lemma \ref{twocurves}}\ \ It is not restrictive to assume, $t=s=0$, $\g_1'(0)\times \g_2'(0)\neq 0$, $|\g_1(0) - \g_2(0)|\neq 0$. Let $\ell >0$. Let $\bar{\g}_i(s):= \g_i(0)+s\g'_i(0)$, $s\in[-\ell,\ell]$.
If
$$ \ell|\g_1'(0)\times \g_2'(0)|\leq |\g_1(0)-\g_2(0)|,$$
then, by (\ref{equivalence}) the proof is complete. 
Thus, assume
\begin{equation*}
		\ell|\g_1'(0)\times \g_2'(0)|>|\g_1(0)-\g_2(0)|,
\end{equation*}
which implies that the segments $\bar{\g}_1$ and $\bar{\g}_2$ have an internal crossing point and that  $\bar{d}>0.$ In order to prove that the segments intersect, consider the function
$$ \vfi_1(t):= (\bar{\g}_1(t)-\g_2(0))\times \g_2'(0),\quad t\in [-\ell,\ell], $$
which represents a signed distance between the point $\bar{\g}_1(t)$ and the line which lies on $\g_2'(0)$. The derivative of $\vfi_1$ is 
$$ \vfi_1'(t):= \bar{\g}'_1(t)\times \g_2'(0) = \g'_1(0)\times \g_2'(0),$$
and therefore
$$ \vfi_1(t) = 0\quad \mbox{if and only if}\quad \vfi_1(0) + t\vfi_1' = 0,$$
iff
$$ (\g_1(0)-\g_2(0))\times \g_2'(0) + t\g'_1(0)\times \g_2'(0) = 0.$$
By (\ref{equivalence}), a sufficient condition for such a $t\in[-\ell,\ell]$ to exist is then:
\begin{equation}\label{xing}
		\ell|\g_1'(0)\times \g_2'(0)|>|\g_1(0)-\g_2(0)|.
\end{equation}
If we want to make sure that the two segments intersect, (and not only that $\bar{\g}_1$ intersects the whole line lying on $\bar{\g}_2$), we have to ask also that there exist $s\in [-\ell,\ell]$ such that 
$$ \vfi_2(s)= (\bar{\g}_2(s)-\g_1(0))\times \g_1'(0)=0, $$
which is implied, in the same way as above, by condition (\ref{xing}). Define
$$\bar{d}:=\min\{ d(\bar{\g}_1(-\ell),\bar{\g}_2), d(\bar{\g}_1(\ell),\bar{\g}_2),d(\bar{\g}_1,\bar{\g}_2(-\ell)), d(\bar{\g}_1,\bar{\g}_2(\ell)) \}.$$ 
Now we use the fact the curves do not intersect: if each curve is close enough to its tangent line then two tangent lines cannot cross, otherwise the curves themselves would have to intersect. We claim that either 
\begin{eqnarray*} 
i) & &\ds \max\{|\g_i(s)-\bar{\g}_i(s)|,\ s\in [-\ell,\ell],\ i=1,2\}\geq \frac 12\, \bar{d}\\
 \mbox{or} & & \\
ii) & & \g_1 \cap \g_2 \neq \emptyset.
\end{eqnarray*} 
We argue by contradiction: assume that
\begin{equation}
\label{ineq:to_contradict}
\sup_{s\in [-\ell,\ell]}|\g_i(s)-\bar{\g}_i(s)|< \frac 12\, \bar{d},\ i=1,2, \hspace{2cm}
\end{equation}
then the traslated segments $\bar{\g}_1^\pm:= \bar{\g}_1 \pm \delta(\bar{\g}_1')^\perp,\ \delta <  (1/2) \bar{d}$, intersect the segments $\bar{\g}_2^\pm:= \bar{\g}_2 \pm \delta(\bar{\g}_2')^\perp$ (observe that shifting a segment $\bar{\g_i}$ in the direction perpendicular to $\bar{\g}_i'$ implies proportional changes in distances, (see Fig.\ref{fig:inutile})). Let $P$ be the internal part of the parallelogram given by the intersections of the segments $\bar{\g}_i^\pm$. By \pref{ineq:to_contradict} it holds: $P\cap\g_i \neq \emptyset,\ i=1,2$. By construction, following $\partial P$ in counterclockwise sense, we find: $\bar{\g}_1^+ \cap \g_2$, $\bar{\g}_2^+ \cap \g_1$, $\bar{\g}_1^- \cap \g_2$, $\bar{\g}_2^- \cap \g_1$, which is a contradiction since by Jordan's curve theorem (see e.g. \cite[Theorem 11.7]{Wall72}) $\g_1$ disconnects $P$ into two sets $P_1$ and $P_2$, so that any continuous curve $\g$ with $P\cap \g \neq \emptyset$ and $\g \cap \g_1=\emptyset$ would have either $\{ \g \cap \partial P\}\subset \partial P_1$ or $\{ \g \cap \partial P\}\subset \partial P_2$. By contradiction of \pref{ineq:to_contradict} we conclude
\begin{equation}\label{maxd}
		\max\left\{\sup_{s\in [0,\ell]}|\g_1(s)-\bar{\g}_1(s)|, \sup_{s\in [0,\ell]}|\g_2(s)-\bar{\g}_2(s)|\right\}\geq \frac 12\, \bar{d}.
\end{equation}
\begin{figure}[h]
	\begin{center}
		\psfrag{g1m}{$\bar{\g}_1^-$}
		\psfrag{g1}{$\bar{\g}_1$}
		\psfrag{g1p}{$\bar{\g}_1^+$}
		\psfrag{g2m}{$\bar{\g}_2^-$}
		\psfrag{g2}{$\bar{\g}_2$}
		\psfrag{g2p}{$\bar{\g}_2^+$}
		\psfrag{d}{$\bar{d}$}
		\psfrag{del}{$\delta$}
		\psfrag{P}{$P$}
		\includegraphics[bb=196 644 402 780]{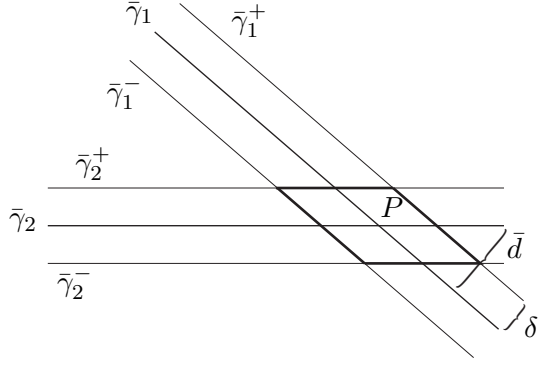}
	\end{center}
	\caption{Crossing segments force curves to intersect inside the parallelogram $P$.}
	\label{fig:inutile}
\end{figure} 
Now let us estimate $\bar{d}$ from below. Denoting by $r_2$ the line which lies on $\g_2'(0)$, we have
$$ d(\bar{\g}_1(t),r_2)= |(\bar{\g}_1(t)-\bar{\g}_2(s))\times \g_2'(0)|,\quad \forall\,s,t\in [-\ell,\ell],$$
so, in particular, it holds:
$$ \min\{ d(\bar{\g}_1(\ell),\bar{\g}_2), d(\bar{\g}_1,\bar{\g}_2(\ell))\} \geq \min\{ |(\bar{\g}_1(\ell) - \bar{\g}_2(\ell))\times \g_2'(0)|, |(\bar{\g}_2(\ell) - \bar{\g}_1(\ell))\times \g_1'(0)| \}.$$
We compute
\begin{align*} 
		|(\bar{\g}_2(\ell) - \bar{\g}_1(\ell))\times \g_1'(0)| &= |(\g_2(0)+\ell \g_2'(0) -\g_1(0) - \ell \g_1'(0))\times \g_1'(0)|\\
		& = |(\g_2(0)-\g_1(0) +\ell(\g_2'(0)-\g_1'(0))\times \g_1'(0)|\\
		& \geq \ell|\g_2'(0)\times \g_1'(0)| - |\g_2(0)-\g_1(0)|.
\end{align*}		
The same estimate holds for $|(\bar{\g}_1(\ell) - \bar{\g}_2(\ell))\times \g_2'(0)|$, and for the endpoints in $-\ell$. By (\ref{maxd}) we find that $\forall\, \ell >0$
\begin{align}
 		\ell|\g_2'(0)\times \g_1'(0)| - |\g_2(0)-\g_1(0)| &\leq 2 \max\left\{\max_{s\in [-\ell,\ell]}|\g_i(s)-\bar{\g}_i(s)|:\ i=1,2 \right\}\nonumber\\
 		& \leq 2\sum_{i=1,2}\max_{s\in [-\ell,\ell]}|\g_i(s)-\bar{\g}_i(s)|.\nonumber
\end{align} 		
\rightline{$\Box$}
\medskip

\textit{Proof of Lemma \ref{onecurve}}\ \ 
Again, it is not restrictive to prove the statement in the point $s=0$. For every $s\in [0,\ell]$ it holds:
$$ \big|\g(s) - \big(\g(0)+s\g'(0)\big)\big|=\left| \int_0^s \gamma'(\sigma)d\sigma - s\gamma'(0)\right|\leq   \int_0^s \left|  \gamma'(\sigma) - \gamma'(0)\right| d\sigma = $$	
$$ =\int_0^s \left|  \gamma'(0) + \int_0^\sigma \gamma''(\tau)\,d\tau - \gamma'(0)\right| d\sigma \leq \int_0^s \int_0^\sigma |\gamma''(\tau)|\,d\tau\, d\sigma $$
$$ \leq \int_0^s \left(\int_0^\sigma 1\,d\tau\right)^{1/2} \!\!\!\left(\int_0^\sigma |\gamma''(\tau)|^2\tau\right)^{1/2} \!\!\!d\sigma = \int_0^s \sigma^{1/2} \left(\int_0^\sigma |\gamma''(\tau)|^2\tau\right)^{1/2}d\sigma\leq$$
$$\leq \int_0^s \sigma^{1/2}d\sigma \left(\int_0^s |\gamma''(\tau)|^2d\tau\right)^{1/2}=\frac 23 s^{3/2}\left(\int_0^s |\gamma''(\tau)|^2d\tau\right)^{1/2} \leq \frac 23 \ell^{3/2}\left(\int_0^\ell |\gamma''(\tau)|^2d\tau\right)^{1/2} .$$
\rightline{$\Box$}
\medskip

\textit{Proof of Corollary \ref{corperiod}}\ \
Let $\ell, L>0$. If $\ell<L$ we obtain the thesis by Lemma \ref{onecurve}. Let then $L<\ell$. We argue by induction. Assume first that $s\in [0,L]$; following the proof of Lemma \ref{onecurve} we get
\begin{equation*}
		\big|\gamma(s) - \big(\gamma(0)+s\gamma'(0)\big)\big| \leq \frac 23 s^{3/2}\left(\int_0^s |\gamma''(\tau)|^2d\tau\right)^{1/2} \leq \frac 23 L^{3/2}\left(\int_0^L |\gamma''(\tau)|^2d\tau\right)^{1/2}.
\end{equation*}
Now let $n \in \N$, ($\ell>(n+1)L$), and assume that 
\begin{equation}\label{induction}	 
		\max_{s\in [0,nL]}|\g(0)+s\g'(0) - \g(s)|\leq \frac 23 (nL)^{3/2} \left(\int_{0}^L |\g''(\sigma)|^2d\sigma\right)^{1/2},
\end{equation}		
we have	to show the analogue estimate for all $s\in [nL,(n+1)L]$. For such an $s$ we have:
\begin{align} 
		|\g(s) -(\g(0) +s\g'(0))| = &|\g(s) - \g(nL) + \g(nL) - \g(0) - s\g'(0) +nL\g'(0) - nL\g'(0) | \nonumber \\
															\leq 	& |\g(nL) - (\g(0) +nL\g'(0))| + |\g(s) - \g(nL) -(s-nL)\g'(0)|,  \label{euno}
\end{align}
by the induction hypothesis (\ref{induction}) it holds
\begin{equation}\label{edue}
		|\g(nL) - (\g(0) +nL\g'(0))| \leq \frac 23 (nL)^{3/2}\left(\int_{0}^L |\g''(\sigma)|^2d\sigma\right)^{1/2}.		
\end{equation}
On the other hand, by $L$-periodicity of $\g'$ and Lemma \ref{onecurve}
\begin{align} 
		|\g(s) - \g(nL) &+(s-nL)\g'(0)| = |\g(s) - \g(nL) -(s-nL)\g'(nL)|\leq \nonumber \\
		 \leq & \frac 23 (s-nL)^{3/2}\left(\int_{nL}^{s} |\g''(\sigma)|^2d\sigma\right)^{1/2} \leq \frac 23 (s-nL)^{3/2}\left(\int_0^L |\g''(\sigma)|^2d\sigma\right)^{1/2}.\label{etre}
\end{align}		 								
Combining (\ref{euno}), (\ref{edue}), (\ref{etre}), for all $s\in [nL,(n+1)L]$ we find
$$ |\g(s) -(\g(0) +s\g'(0))| \leq \frac 23 (nL)^{3/2}\left(\int_{0}^L |\g''(\sigma)|^2d\sigma\right)^{1/2} + \frac 23 (s-nL)^{3/2}\left(\int_0^L |\g''(\sigma)|^2d\sigma\right)^{1/2} $$
and by superlinearity of $x^{3/2}$ we conclude
$$ |\g(s) -(\g(0) +s\g'(0))| \leq \frac 23 s^{3/2} \left(\int_0^L |\g''(\sigma)|^2d\sigma\right)^{1/2}\leq \frac 23 \ell^{3/2} \left(\int_0^L |\g''(\sigma)|^2d\sigma\right)^{1/2}.$$
\rightline{$\Box$}
\medskip

\textit{Proof of Lemma \ref{approximation}}\ \ As in Definition \ref{def:periodicmodified}, define a family of approximating curves $\{\tilde{\g}_\sigma\}_{\sigma\in \R}$ by 
\begin{eqnarray*}
		\tilde{\gamma}_\sigma(\sigma) 	&=&	\gamma(\sigma),\\
		\tilde{\gamma}_\sigma'(s)	&=& \tilde{\theta}^\bot(s)\quad \mbox{for all }s \in \R,
\end{eqnarray*}
see definitions (\ref{defatilde})-\dots-(\ref{defgammap}). Observe that $\forall\,s,\sigma \in \R:\ $ $\tilde{\g}'_\sigma(s)=\tilde{\g}'(s),$ $\tilde{\g}''_\sigma(s)=\tilde{\g}''(s).$
Then, by $L$-periodicity of $\g$, for all $s\in \R$, $\forall\,r \in[0,\ell]$, there exists $t\in \R$ such that $\g(s)=\g(t)$, $L> (s+r) -t >0$, and it holds:
\begin{align}
		|\g(s)+r\g'(s)-\g(s+r)| &= |\g(t)+r\g'(t)-\g(s+r)| \\
		&\leq |\tilde{\g}_\sigma(t) +r  \tilde{\g}'_\sigma(t) -\tilde{\g}_\sigma(s+r)| +\nonumber \\
		&+  |\g(t) - \tilde{\g}_\sigma(t)| + r|\g'(t)- \tilde{\g}'_\sigma(t)| + |\g(s+r) -\tilde{\g}_\sigma(s+r)|.\nonumber
\end{align}		
By Corollary \ref{corperiod} we have
\begin{align*}
	|\tilde{\g}_\sigma(t) +r  \tilde{\g}'_\sigma(t) -\tilde{\g}_\sigma(s+r)| &\leq \frac 23 r^{3/2}\min\left\{ \int_{s}^{s+r}|\tilde{\alpha}'(\tau)|^2d\tau\, , \int_{0}^{L}|\tilde{\alpha}'(\tau)|^2d\tau \right\}^{1/2} \\
	& \leq \frac 23 \ell^{3/2}\min\left\{ \int_{s}^{s+\ell}|\tilde{\alpha}'(\tau)|^2d\tau\, , \int_{0}^{L}|\tilde{\alpha}'(\tau)|^2d\tau \right\}^{1/2}.
\end{align*}	
By definition of $\{\tilde{\g}_\sigma\}_{\sigma\in \R}$, choosing $\sigma=t$ we find
$$|\g(t) - \tilde{\g}_t(t)|=0,$$
$$|\g(s+r) - \tilde{\g}_t(s+r)|\leq \int_{t}^{s+r}\hspace{-0.6cm} |\g'(\tau)- \tilde{\g}'(\tau)|d\tau \leq \min\left\{ \int_{s}^{s+r} \hspace{-0.6cm}|\g'(\tau)- \tilde{\g}'(\tau)|d\tau\, , \, \int_{0}^{L}\hspace{-0.3cm} |\g'(\tau)- \tilde{\g}'(\tau)|d\tau\right\}.$$
\rightline{$\Box$}

\begin{rem}\label{remone}
If there is just one smooth periodic curve $\g$ (instead of $\g_1$ and $\g_2$), then we can obtain the same estimate as in Proposition \ref{esttg}, using the same arguments as in Lemmas \ref{twocurves}--\ref{approximation}. Let $L,\ell>0$ and $(s,t)\in \R^2$, we address three cases:

$i)$ Case $|t-s|>2\ell$: then $\g([s-\ell, s+\ell])\cap \g([t-\ell, t+\ell])=\emptyset$ and we can apply Lemmas \ref{twocurves} - \ref{approximation} directly, as for two disjoint curves.

$ii)$ Case $|t-s|\leq 2\ell$, $L>\ell$: (assume $s<t$) we have $[s,t]\subset [s,s+\ell]\cup [t-\ell,t]$, and it holds:
\begin{equation}\label{ole}
	\frac{\sqrt{2}}{4}|\g'(s)\times \g'(t)|\leq |\g'(s) - \g'(t)|= \left|\int_s^t \g''(\sigma)\, d\sigma \right| \leq \left( \int_s^t 1\, d\sigma\right)^{1/2}\left(\int_s^t |\g''(\sigma)|^2 d\sigma \right)^{1/2}
\end{equation}	
\begin{equation*}
		\leq (2\ell)^{1/2}\left( \int_{s}^{s+\ell}|\g''(\sigma)|^2 +\int_{t-\ell}^{t}|\g''(\sigma)|^2\right)^{1/2}\!\!\!.		
\end{equation*}

$iii)$ Case $|t-s| \leq 2\ell$, $L\leq \ell $: by periodicity of $\g'$ we can find $\bar{s},\bar{t} \in \R$ such that $\g'(s)=\g'(\bar{s}),\ \g'(t)=\g'(\bar{t})$ and $|\bar{s}-\bar{t}|\leq L$. Then, by (\ref{ole}) we find
\begin{equation}\label{ole2}
		\frac{\sqrt{2}}{4}|\g'(s)\times \g'(t)|\leq \left|\int_{\bar{s}}^{\bar{t}} \g''(\sigma)\, d\sigma \right| \leq (L)^{1/2}\left( \int_0^L|\g''(\sigma)|^2\right)^{1/2}\!\!\!.		
\end{equation}		
Finally, we introduce the approximating curves $\tilde{\g}$ as in Lemma \ref{approximation}, and we use equations (\ref{ole}), (\ref{ole2}):
\begin{align*} 
		|\g'(s) - \g'(t)|&\leq |\tilde{\g}'(s)-\tilde{\g}'(t)| + |\g'(s)-\tilde{\g}'(s)| + |\tilde{\g}'(t)-\g'(t)|\leq \\
		&\leq \frac 23 \ell^{1/2}\min\left\{ \int_{s-\ell}^{s+\ell}|\tilde{\alpha}'(\sigma)|^2d\sigma\, , \, \int_{0}^{L}|\tilde{\alpha}'(\sigma)|^2d\sigma \right\}^{1/2} +  \\
 		&+\frac 23 \ell^{1/2}\min\left\{ \int_{t-\ell}^{t+\ell}|\tilde{\alpha}'(\sigma)|^2d\sigma\, , \, \int_{0}^{L}|\tilde{\alpha}'(\sigma)|^2d\sigma \right\}^{1/2} +  \\
 		&+ |\g'(s) - \tilde{\g}'(s)| + |\tilde{\g}'(t)-\g'(t)|.
\end{align*}		
\end{rem}



\subsection{Compactness in the strong topology}\label{sec:strongtop}

We first comment on the definition of \textit{weak} and \textit{strong} convergence for a couple of functions and measures $(\mue,P_\es)$. Let $\{\mue\} \subset RM(\o)$, $\mue \ras \bar{\mu}=\frac12\L^2\llcorner \o$ and let $\{P_\es\} \subset L^1(\o,\mue;\R^{2 \times 2})$ such that $|P_\es|=1$. 
By compactness there exists a subsequence weakly-$*$ converging to a measure $\gamma \in RM(\o\times \R^{2 \times 2})$, 
but we can only represent the limit measure $\g$ through a family of Young measures $\{\nu_x\}_{x\in \o}$ (see e.g. \cite{afp}) satisfying
$$\int_{\o \times \R^{2 \times 2}} \vfi(x,y)\,d\gamma(x,y)= \int_\o \left(\int_{\R^{2 \times 2}} \vfi(x,y)\,d\nu_x(y)\right)d\bar{\mu}(x).$$

In this section we prove that for a sequence $\{\ue\}$ with bounded energy, it is possible to decompose the limit measure as $[\bar{\mu},P]$ and we show which properties of $P_\es$ are inherited by $P$ in the limit. Proposition~\ref{prop:strongconv} collects the statements, and this proposition ends the proof of part~\ref{thetheorem:part1} of Theorem~\ref{thetheorem}.

\begin{prop}\label{prop:strongconv}
		Let $\{\ue\} \subset K$ be a smooth sequence such that $ \G_\es(\ue)\leq \Lambda$ for some $\Lambda>0$, and let $\mue, P_\es$ be the related sequences of measures on the boundary of the support and orthogonal projections on the tangent space. Let $\bar{\mu}=\frac12\L^2\llcorner \o \in RM(\o).$ Then there exists $P\in L^2(\o; \R^{2\times 2})$ such that, up to subsequences, 
\begin{equation}\label{eq:strongconv}
	\begin{array}{c}
		\ds \lim_{\es \ra 0} \int_\o \vfi(x,P_\es(x))\,d\mue(x)= \int_\o \vfi(x,P(x))\,d\bar{\mu}(x),\quad \forall\, \vfi \in C^0(\o\times \R^{2\times 2}).
	\end{array}
\end{equation}
		Thus
		$$ (P_\es,\mue) \ra (P,\bar{\mu})\h \mbox{strongly in $L^2$, in the sense of Def. \ref{def:strongconv}.}$$ 
		Moreover $P$ satisfies
		\begin{subequations}
		\label{pb:main}
		\begin{eqnarray}
         P^2 = P && \mbox{a.e. in } \o,\label{pr:pro}\\
         \rank(P)=1 && \mbox{a.e. in } \o,\label{pr:rank}\\
         P \mbox{ is symmetric} && \mbox{a.e. in } \o,\label{pr:symm}\\
         \div P \in L^2(\R^2;\R^2) && (\mbox{extended to $0$ outside  }\o),\label{pr:divpint}\\
         P\, \div P = 0  && \mbox{a.e. in } \o.\label{pr:pdivpzero}
		\end{eqnarray}
		\end{subequations}
\end{prop}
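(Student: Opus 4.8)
The plan is to combine the weak convergence from Section~\ref{sec:lowerbound} with the pointwise equicontinuity estimate of Proposition~\ref{esttg} to upgrade weak convergence to strong convergence, and then to read off the algebraic and differential properties of the limit $P$. First I would recall from Lemma~\ref{lemmaconvergence} that, up to a subsequence, $(\mu_\e,P_\e)\weakto(\bar\mu,P)$ for some $P\in L^\infty(\Omega;\R^{2\times2})$ with $\mathrm{supp}\,P\subseteq\overline\Omega$, and from Lemma~\ref{lemma:divint} that $\div P\in L^2(\R^2;\R^2)$ and $\liminf_\e\G_\e(u_\e)\ge\frac18\int_\Omega|\div P|^2$; the same weak limit is shared by the regularized pairs $(\tilde\mu_\e,\tilde P_\e)$. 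The remaining content of the proposition is \eqref{eq:strongconv} (equivalently, strong convergence in the sense of Definition~\ref{def:strongconv}, equivalently weak-$*$ convergence of the graph measures $[\mu_\e,P_\e]$, by Remark~\ref{rem:comparison}), together with the properties \eqref{pr:pro}--\eqref{pr:pdivpzero}.

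The heart of the argument is strong convergence. The graph measures $[\mu_\e,P_\e]$ converge weakly-$*$ (along a subsequence) to some $\gamma\in RM(\Omega\times\R^{2\times2})$ which disintegrates as $\gamma=\int_\Omega\nu_x\,d\bar\mu(x)$ for a family of probability measures $\nu_x$ on $\R^{2\times2}$; it suffices to show $\nu_x=\delta_{P(x)}$ for $\bar\mu$-a.e.\ $x$, since then $\gamma=[\bar\mu,P]$, which is exactly \eqref{eq:strongconv}. To prove this I would exploit Proposition~\ref{esttg}: fixing $\ell>0$, that estimate bounds $|P_\e(\gamma_{\e,i}(s_i))-P_\e(\gamma_{\e,j}(s_j))|$ by $\frac{C}{\ell}|\gamma_{\e,i}(s_i)-\gamma_{\e,j}(s_j)|$ plus terms controlled by $\ell^{1/2}\G_\e(u_\e)^{1/2}$ (via $\int|\tilde\alpha'_\e|^2$, using Proposition~\ref{prop:inequality}) and by the curve-approximation errors $\int|\tilde\gamma'_{\e}-\gamma'_{\e}|$, which by \eqref{estimategammat}--\eqref{patch} are $O(\e^{1/2})$. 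Integrating the square of this inequality against $d\mu_\e(x)\,d\mu_\e(y)$ over $\{|x-y|<r\}$ (here $x=\gamma_{\e,i}(s_i)$, $y=\gamma_{\e,j}(s_j)$, and the sum over $i,j$ is absorbed into the measures $\mu_\e\otimes\mu_\e$) yields, for the quantity $\iint_{|x-y|<r}|P_\e(x)-P_\e(y)|^2\,d\mu_\e(x)\,d\mu_\e(y)$, a bound of the form $C r^2 (r/\ell)^2 \mu_\e(\R^2)^2 + C\ell\,\Lambda\,\mu_\e(\R^2)^2 + C\e\,(\ldots)$. Passing to the limit $\e\to0$ (so the measures become $\bar\mu\otimes\bar\mu=\frac14\L^2\otimes\L^2$), then letting $\ell\to0$ and finally $r\to0$ appropriately, gives $\iint_{|x-y|<r}|P_\e(x)-P_\e(y)|^2\,d\mu_\e\,d\mu_\e\to0$ in the relevant double limit. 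Concretely, one lets $\ell=\ell(r)\to0$ slower than $r$, e.g.\ $\ell=r^{1/2}$, so both $r^2(r/\ell)^2$ and $\ell$ tend to $0$ with $r$; this is the standard ``approximate continuity in the mean'' criterion that forces the disintegration to be a Dirac mass. I expect this localization-and-double-limit bookkeeping to be the main obstacle, both because of the need to handle the two alternatives in the $\min$'s of Proposition~\ref{esttg} (short-stretch versus full-period bounds, to cope with curves of unbounded length) and because one must be careful that $\ell$ is chosen in a way compatible with all error terms simultaneously.

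Once strong convergence holds, the algebraic properties \eqref{pr:pro}--\eqref{pr:symm} follow by a standard lower-semicontinuity / Young-measure argument: for each $\e$, $P_\e(x)$ satisfies $P_\e^2=P_\e$, $|P_\e|=1$, $\rank P_\e=1$, $P_\e$ symmetric for $\mu_\e$-a.e.\ $x$ (this is \eqref{pj:properties}); strong convergence means $\nu_x=\delta_{P(x)}$, and since the set of matrices satisfying these constraints is closed, $P(x)$ lies in it for $\bar\mu$-a.e.\ $x$, i.e.\ a.e.\ in $\Omega$. Equivalently, apply \eqref{eq:strongconv} with $\varphi(x,A)=|A^2-A|^2$, $|A|^2-1$, etc., and use that the $\e$-integrands vanish. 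Property \eqref{pr:divpint} is already supplied by Lemma~\ref{lemma:divint}. Finally, for the parallel-stripe condition \eqref{pr:pdivpzero}: since $P=e\otimes e$ with $e$ a unit vector field (from $\rank1$, symmetry, $|P|=1$) and $\div P\in L^2$, one has $\div P=(\nabla e)e+e\,\div e$, so $P\,\div P=e\,(e\cdot\div P)=e\,(e\cdot((\nabla e)e)+\div e)$; writing $e\cdot(\nabla e)e=\frac12 e\cdot\nabla|e|^2=0$, we get $P\,\div P=(\div e)\,e$, so it remains to show $\div e=0$ a.e. This is the one genuinely new identity, and I would derive it from the structure of the approximating curves: on the modified curves $\tilde\gamma_{\e,j}$ the tangent is $\tilde\theta_{\e,j}^\perp$ and the normal direction $\tilde\theta_{\e,j}$ is constant along each transport ray by construction (the rays are the level-set normals of the Kantorovich potential), while Proposition~\ref{prop:inequality} forces $M_\e/\e\to1$, i.e.\ the ray spacing becomes uniform; uniform spacing of the normals is precisely the statement that $e$ has no divergence in the limit. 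Making this rigorous means testing $\div(P_\e\mu_\e)$ against $\eta$ of the form $\eta=P\psi$ (i.e.\ tangential test fields) and showing the tangential component of the limit $\div(P\bar\mu)$ vanishes; alternatively, and more cleanly, one tests with $\eta$ and decomposes $\div P$ using the weak-strong convergence Theorem~\ref{theo:weakstrong} applied to $P_\e$ and $\nabla\eta$, extracting the identity $P\div P=0$ from the fact that the ``tangential part'' of $\int \tilde P_\e:\nabla\eta\,d\tilde\mu_\e$ carries an extra factor controlled by $M_\e/\e-1\to0$. I would present the ray-geometry version as the main line and the weak-strong version as the technical backup.
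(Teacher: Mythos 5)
Your proposal matches the paper's proof in all essentials: the reduction to showing that the Young-measure disintegration of the limiting graph measure is a Dirac mass, the use of Proposition~\ref{esttg} integrated against a localizing kernel with a double limit in the localization scale and in $\ell$ (the paper localizes with mollifiers $\rho^k$, takes $\e\to0$, then $k\to\infty$, then $\ell\to0$, which is the same bookkeeping as your coupling $\ell=\ell(r)$, and it handles the two alternatives in the $\min$'s exactly as you anticipate), the algebraic properties \eqref{pr:pro}--\eqref{pr:symm} from closedness of the constraint set under strong convergence, and \eqref{pr:divpint} from Lemma~\ref{lemma:divint}. The only caveat is \eqref{pr:pdivpzero}: the paper's argument is precisely what you call the ``technical backup'' --- the orthogonality $P_\e H_\e=0$ of the curvature vector to the smooth interface at level $\e$ passes to the limit via the weak--strong convergence Theorem~\ref{theo:weakstrong} applied to the strongly converging $P_\e$ and the weakly converging curvature --- whereas your preferred ray-geometry ``main line'' (uniform ray spacing forces $\div e=0$) remains heuristic and also presupposes an orientable $e$ with $P=e\otimes e$, which is not available at this stage, so the backup should be promoted to the main argument.
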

\textit{Proof.} First of all we note that properties (\ref{pr:pro}--\ref{pr:symm}) are a direct consequence of the strong convergence (\ref{eq:strongconv}) and (\ref{pj:properties}). Property (\ref{pr:divpint}) is proved in Lemma \ref{lemma:divint}. Property (\ref{pr:pdivpzero}) corresponds to $P\cdot H=0$, which is trivially true at level $\es$ since the interfaces are smooth; it is conseved in the limit as $\es \ra 0$ owing to (\ref{weakp}), Lemma \ref{lemma:divint}, (\ref{eq:strongconv}), and Theorem \ref{theo:weakstrong}. 

Let $\{\rho^k\}$  be a sequence of smooth mollifiers in $\R^2$, and let $(\mue,P_\es)$ be a subsequence such that the graph measures $[\mu_\es,P_\e]$ converge to $\gamma \in RM(\o\times \R^{2 \times 2})$ in the weak-$*$ sense. Let $\{\nu_x\}_{x\in \o}$ be the family of Young measures associated to $\g$ and let $S:=\{f\in \R^{2 \times 2}: |f|\leq 1\}$. In order to prove (\ref{eq:strongconv}) it sufficient to show that 
\begin{equation}\label{eq:youngzero}
	\lim_{k\ra \infty} \lim_{\es \ra 0}\ \int_{\o \times \o} \rho^k(x-y)|P_\es(x)-P_\es(y)|\,d\mue(x)\,d\mue(y) = 0.
\end{equation}
In fact, if we show that 
\begin{equation}\label{eq:limyoung}
	\lim_{k\ra \infty} \lim_{\es \ra 0}\ \int_{\o \times \o} \rho^k(x-y)|P_\es(x)-P_\es(y)|\,d\mue(x)\,d\mue(y) = \frac 12\int_\o \int_S \int_S |f-g|\,d\nu_x(g)\,d\nu_x(f)\,d\bar{\mu}(x),
\end{equation}
then equation (\ref{eq:youngzero}) implies that 
$$ \int_\o \int_S \int_S |f-g|\,d\nu_x(g)\,d\nu_x(f)\,d\bar{\mu}(x) =0, $$
that is
$$ \int_S \int_S |f-g|\,d\nu_x(g)\,d\nu_x(f)=0\quad \mbox{for $\L^2$-a.e. }x\in \o,  $$
and this is true if and only if the support of each $\nu_x$ is atomic, i.e. if there exists a function $\bar{P}:\o \ra S$ such that
$$ \nu_x =\delta_{\bar{P}(x)}\quad \mbox{for $\L^2$-a.e. }x\in \o.  $$
Notice that $ \bar{P}(x)=\int_S x \ d\nu_x$ is measurable, owing to the weak measurability of $x \mapsto \nu_x$. Therefore we have that
$$\int_{\o \times S} \vfi(x,y)\,d\gamma(x,y)=  \int_\o \left(\int_S \vfi(x,y)\,d\delta_{\bar{P}(x)}(y)\right)d\bar{\mu}(x) =  \int_\o \vfi(x,\bar{P}(x))\,d\bar{\mu}(x), $$
i.e. $\gamma= [\bar{P},\bar{\mu}]$ and the measure-function pairs $(\mue,P_\es) $ strongly converge to $(\bar{\mu}, \bar{P})$.
\smallskip

The remaining part of the section is devoted to the proof of (\ref{eq:limyoung}) and of (\ref{eq:youngzero}).
\smallskip

\subsection{Proof of (\ref{eq:limyoung}).} For sake of brevity, denote $S:=\{f\in \R^{2 \times 2}: |f|\leq 1\}$, $Q:=\o \times S$ and $\vfi^k:Q^2\ra \R$, $\vfi^k(x,f,y,g):=\rho^k(x-y)|f-g|$. Note that the weak convergence of $[\mue,P_\e]$ to $\gamma$ on $Q$ implies that the product measures $[\mue,P_\e] \times [\mue,P_\e]$ converge weakly to $\gamma\times\gamma$ on $Q\times Q$.  We have
$$\int_{\o \times \o} \rho^k(x-y)|P_\es(x)-P_\es(y)|\,d\mue(x)\,d\mue(y)=\int_{Q^2} \vfi^k(x,f,y,g)\,d([\mue,P_\e] \times [\mue,P_\e])(x,f,y,g), $$
and passing to the limit as $\es \ra 0$,  we obtain
\begin{equation}\label{tonelli}
		\int_{Q^2} \vfi^k(x,f,y,g)\,d(\gamma\ext\gamma)(x,f,y,g). 
\end{equation}	
By Fubini's theorem this is equal to
$$\int_Q\int_Q \vfi^k(x,f,y,g)\,d\gamma(y,g)d\gamma(x,f), $$
which we can now disintegrate into
$$ \int_Q \left(\int_\o \int_S \vfi^k(x,f,y,g)d\nu_y(g) d\bar{\mu}(y) \right)d\gamma(x,f).$$
Define
\[
\psi(x,f):=\frac12 \int_S |f-g|\, d\nu_x(g) \in L^1(Q,\g),
\]
and define $\psi^k\in C^0(Q)$ as the partial convolution, with respect to $x$, of $\rho^k$ with $\psi$:
\begin{align*}
\psi^k(x,f):=(\rho^k \conv_x \psi) (x,f)&= \int_\o \left(\rho^k(x-y)\int_S |f-g|\, d\nu_y(g)\right)d\bar{\mu}(y) \\
&= \int_\Omega\int_S \vfi^k(x,f,y,g)\,d\nu_y(g) d\bar{\mu}(y) .
\end{align*}
By standard results on convolution 
$$ \psi^k \ra \psi,\quad \mbox{strongly in $L^1(Q,\g)$, as } k\ \ra \infty,$$
which implies
$$ \lim_{k\ra \infty} \int_Q \psi^k(x,f)\,d\g(x,f) = \int_Q \psi(x,f)\,d\g(x,f),$$
that is \pref{eq:limyoung}. 
\qed
\subsection{Proof of (\ref{eq:youngzero}).} Since we are still in the context of the proof of Proposition~\ref{prop:strongconv}, we adopt the assumptions of that Proposition.
\begin{prop}\label{estimatecomp}
Under the hypothesis of Proposition \ref{prop:strongconv} there exists a constant $C>0$ such that, $\forall\,k\in \N$, $\forall\,\es >0$, and $\forall\, \ell>0$, we have
$$ \int_{\o \times \o} \rho^k(x-y)|P_\es(x)-P_\es(y)|\,d\mue(x)\,d\mue(y)\quad \leq \quad C\,\frac {1}{k\ell} \left\| \rho^k \conv \mue\right\|_{L^1(\o;\mue)} +$$ 		
$$ + \quad C\, \ell\, \left\| \rho^k \conv \mue\right\|^{1/2}_{L^{\infty}(\o)} \left\| \rho^k \conv \mue\right\|^{1/2}_{L^1(\o;\mue)} \left( \int_0^{L_\es} |\tilde{\alpha}_\es'(\sigma)|^2\es d\sigma\right)^{1/2}\quad +$$
$$ +\quad C\,\es\left\| \rho^k \conv \mue\right\|^{1/2}_{L^{\infty}(\o)}\,\left\| \rho^k \conv \mue\right\|^{1/2}_{L^1(\o;\mue)}\, \left( \G_\e(\ue)\right)^{1/2}+ $$
$$ + \quad C\, \es \, \left\| \rho^k \conv \mue\right\|_{L^2(\o;\mue)} \left(\G_\e(\ue)\right)^{1/2}.$$ 
\end{prop}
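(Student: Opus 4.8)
The plan is to substitute the pointwise bound of Proposition~\ref{esttg} into the double integral and then estimate, one group of terms at a time, using Cauchy--Schwarz, Fubini's theorem, and the fundamental inequality of Proposition~\ref{prop:inequality}.

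First I would reduce to a sum over pairs of curves. Since $u_\es$ is smooth, $\mue=\es\,\hf\llcorner\Gamma_\es$, so with the periodic parametrization $\{\gamma_{\es,j}\}_{j=1}^{J_\es}$ of Definition~\ref{def:periodicmodified},
\[
\int_{\o\times\o}\!\rho^k(x-y)\,|P_\es(x)-P_\es(y)|\,d\mue(x)\,d\mue(y)
= \sum_{i,j}\es^2\!\int_0^{L_{\es,i}}\!\!\int_0^{L_{\es,j}}\!\rho^k\bigl(\gamma_{\es,i}(s)-\gamma_{\es,j}(t)\bigr)\,\bigl|P_\es(\gamma_{\es,i}(s))-P_\es(\gamma_{\es,j}(t))\bigr|\,ds\,dt .
\]
For $i\neq j$ the curves $\gamma_{\es,i},\gamma_{\es,j}$ are disjoint (Definition~\ref{def:periodicmodified}) and Proposition~\ref{esttg} applies with $(\gamma_1,\gamma_2,s_1,s_2)=(\gamma_{\es,i},\gamma_{\es,j},s,t)$; for $i=j$ the same bound holds by Remark~\ref{remone}. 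Dropping the subscript $\es$, this controls $|P(\gamma_i(s))-P(\gamma_j(t))|$ by a constant times the sum of: a \emph{distance term} $\ell^{-1}|\gamma_i(s)-\gamma_j(t)|$; two \emph{curvature terms} $\ell^{1/2}\bigl(\int_{s-\ell}^{s+\ell}|\tilde\alpha_i'|^2\bigr)^{1/2}$ and its analogue at $(j,t)$; two \emph{integrated approximation errors} $\ell^{-1}\int_{s-\ell}^{s+\ell}|\tilde\gamma_i'-\gamma_i'|$ and its analogue at $(j,t)$; and two \emph{pointwise approximation errors} $|\tilde\gamma_i'(s)-\gamma_i'(s)|$ and $|\tilde\gamma_j'(t)-\gamma_j'(t)|$. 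The $\min$'s in Proposition~\ref{esttg} let one, whenever $L_{\es,i}<\ell$, replace the integrals over $(s-\ell,s+\ell)$ by integrals over the whole $L_{\es,i}$-periodic curve, paying only $L_{\es,i}\le\ell$; I would keep this alternative tacit.

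The main computational device is that, summing over $j$, $\sum_j\int_0^{L_j}\rho^k(\gamma_i(s)-\gamma_j(t))\,\es\,dt=(\rho^k\conv\mue)(\gamma_i(s))$, together with $\sum_i\int_0^{L_i}(\rho^k\conv\mue)(\gamma_i(s))\,\es\,ds=\|\rho^k\conv\mue\|_{L^1(\o;\mue)}$, $\sum_i\int_0^{L_i}(\rho^k\conv\mue)(\gamma_i(s))^2\,\es\,ds=\|\rho^k\conv\mue\|_{L^2(\o;\mue)}^2$, and the Fubini bound $\int_0^{L_i}\int_{s-\ell}^{s+\ell}f(\sigma)\,d\sigma\,ds\le 2\ell\int_0^{L_i}f$ for $f\ge0$, $L_i$-periodic. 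With these: (i) on $\supp\rho^k$, $|\gamma_i(s)-\gamma_j(t)|\le C/k$, so the distance term yields $\tfrac{C}{k\ell}\|\rho^k\conv\mue\|_{L^1(\o;\mue)}$, the first term of the Proposition. (ii) For a curvature term, after summing in $j$ I would write $\rho^k\conv\mue=(\rho^k\conv\mue)^{1/2}(\rho^k\conv\mue)^{1/2}$, bound one factor by $\|\rho^k\conv\mue\|_{L^\infty(\o)}^{1/2}$, then apply Cauchy--Schwarz in $s$; the resulting factors are $\|\rho^k\conv\mue\|_{L^1(\o;\mue)}^{1/2}$ and, via Fubini, $(2\ell)^{1/2}\bigl(\int_0^{L_\es}|\tilde\alpha_\es'|^2\es\,d\sigma\bigr)^{1/2}$, giving the second term. (iii) For the pointwise errors, summing in $j$ and Cauchy--Schwarz in $s$ with weight $\es\,ds$ produce $\|\rho^k\conv\mue\|_{L^2(\o;\mue)}\bigl(\sum_i\int_0^{L_i}|\tilde\gamma_i'-\gamma_i'|^2\es\,ds\bigr)^{1/2}$, and here I would invoke
\[
\sum_i\int_0^{L_i}|\tilde\gamma_i'(\sigma)-\gamma_i'(\sigma)|^2\,\es\,d\sigma\ \le\ C(\lambda)\,\es^2\,\G_\es(u_\es),
\]
proved as in \pref{boundga}--\pref{est:pre-gammat} but keeping squares: on $A_i^\lambda$ (see \pref{ae}), by \pref{gate}--\pref{gate2}, $|\tilde\gamma_i'-\gamma_i'|^2=2(1-\sin\beta_i)\le\tfrac{2}{(1-\lambda)^2}\bigl(\tfrac1{\sin\beta_i}-1\bigr)\bigl(\tfrac{M_i}{\es}\bigr)^2$ by \pref{bigmass}, and on $E_i\setminus A_i^\lambda$, $|\tilde\gamma_i'-\gamma_i'|^2\le4\le\tfrac4{\lambda^2}\bigl(1-\tfrac{M_i}{\es}\bigr)^2$ by \pref{smallmass}, so Proposition~\ref{prop:inequality} applies after summation (the parameter set $[0,L_{\es,i}]\setminus E_i$ being Lebesgue-null, cf.~\cite{PeletierRoegerTA}); this is the fourth term. (iv) For the integrated errors, summing in $j$, peeling off $\|\rho^k\conv\mue\|_{L^\infty(\o)}^{1/2}$, Cauchy--Schwarz in $s$, then $\bigl(\int_{s-\ell}^{s+\ell}|\tilde\gamma_i'-\gamma_i'|\bigr)^2\le 2\ell\int_{s-\ell}^{s+\ell}|\tilde\gamma_i'-\gamma_i'|^2$, Fubini and the displayed bound, the factor $\ell^{-1}$ cancels and one obtains $C\es\|\rho^k\conv\mue\|_{L^\infty(\o)}^{1/2}\|\rho^k\conv\mue\|_{L^1(\o;\mue)}^{1/2}\G_\es(u_\es)^{1/2}$, the third term. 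Adding the four contributions proves the estimate.

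The conceptual content is already in Proposition~\ref{esttg} (non-intersection of the interfaces plus the Jordan curve theorem); within the present proof the only real obstacle is bookkeeping: one must split $\rho^k\conv\mue$ into the appropriate half-powers \emph{before} each Cauchy--Schwarz so that precisely the norms $\|\rho^k\conv\mue\|_{L^1(\o;\mue)}$, $\|\rho^k\conv\mue\|_{L^\infty(\o)}$ and $\|\rho^k\conv\mue\|_{L^2(\o;\mue)}$ appear, and one must control the regime $\ell>L_{\es,i}$ through the $\min$'s, since the individual curve lengths $L_{\es,i}$ carry no lower bound as $\es\to0$.
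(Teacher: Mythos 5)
Your proposal is correct and follows essentially the same route as the paper: substitute the pointwise bound of Proposition~\ref{esttg}, split the double integral into the four groups of terms (distance, curvature, integrated and pointwise approximation errors), produce the norms of $\rho^k\conv\mue$ by peeling off half-powers before Cauchy--Schwarz, and control $\sum_i\int_0^{L_i}|\tilde\gamma_i'-\gamma_i'|^2\,\es\,d\sigma$ by $C\es^2\G_\es(u_\es)$ via \pref{gate}--\pref{gate3}, \pref{bigmass}--\pref{smallmass} and Proposition~\ref{prop:inequality}, exactly as in the paper's estimate \pref{estgammat2}. The only (harmless) deviation is in the curvature term, where you use the exact Fubini identity $\int_0^{L_j}\int_{s-\ell}^{s+\ell}f=2\ell\int_0^{L_j}f$ for $L_j$-periodic $f$ in place of the paper's partition of $[0,L_j]$ into intervals of length $\approx\ell$ with the separate case $\ell>L_j$; this is a mild streamlining that renders the $\min$'s superfluous for that term.
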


Delaying the proof of this proposition, we first complete the proof of (\ref{eq:youngzero}) and of Proposition \ref{prop:strongconv}. We show that if 
\begin{equation}\label{gebound}
		\G_\e(\ue)\leq \Lambda <+\infty,
\end{equation}
then there exists a constant $C>0$ such that, $\forall\,\ell>0$, $\forall\,k\in \N$ we have
\begin{equation}\label{limites}
		\lim_{\es \ra 0}\ \int_{\o \times \o} \rho^k(x-y)|P_\es(x)-P_\es(y)|\,d\mue(x)\,d\mue(y) \leq C\left( \frac{1}{k\ell} +\ell\right),
\end{equation}	
so that we can conclude (\ref{eq:youngzero}):
$$ \lim_{k \ra +\infty}\lim_{\es \ra 0}\ \int_{\o \times \o} \rho^k(x-y)|P_\es(x)-P_\es(y)|\,d\mue(x)\,d\mue(y) =0.$$
In order to prove (\ref{limites}) we examine the limits, as $\es\ra 0$, of the four members on the right-hand side of the inequality in Proposition \ref{estimatecomp}. 
In particular, we need to estimate $|\rho^k\conv\mue(x)|$. Recall that
$$\rho^k\conv\mue(x) = \int_\o \rho^k(x-y)\,d\mue(y).$$
By Lemma \ref{lemmaconvergence} we have that 
$$ \lim_{\es \ra 0}\mue =\mu:=\frac12 \L^2 \llcorner \o,\quad \mbox{weakly-\star\ in the sense of }RM(\o), $$
therefore, by basic properties of the convolution (see e.g. \cite[par 2.1]{afp})
\begin{equation}\label{convconv}
		\lim_{\es \ra 0} \rho^k\conv \mue = \rho^k \conv \mu,\quad \mbox{strongly in }C^0(\o).
\end{equation}		
Since
$$ {\|\rho^k\conv\mue\|}_{L^1(\o;\mue)}= \int_\o \rho^k\conv\mue(x)\, d\mue(x) =_{\ \ RM(\o)}\!\langle\mue \,  , \rho^k \conv \mue\rangle_{C^0(\o)},$$
we have
\begin{equation}\label{convlone}
		\lim_{\es \ra 0} {\|\rho^k\conv\mue\|}_{L^1(\o;\mue)} = \frac 14,
\end{equation}		
\begin{equation}\label{convltwo}
		\lim_{\es \ra 0} {\|\rho^k\conv\mue\|}_{L^2(\o;\mue)} = \frac {1}{2\sqrt{2}}.
\end{equation}		
\begin{equation}\label{convinfty}
		\lim_{\es \ra 0} {\|\rho^k\conv\mue\|}_{L^\infty(\o)} = \frac {1}{2}.
\end{equation}		

We also estimate

\begin{lemma}
	\begin{equation}\label{alphabound}
			\int_0^{L_\es} |\tilde{\alpha}_\e'(s)|^2 \,\es ds \leq C.
	\end{equation}
\end{lemma}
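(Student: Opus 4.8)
The plan is to read the bound off the fundamental estimate of Proposition~\ref{prop:inequality}. Suppressing the index $\es$ on the per-curve quantities (as elsewhere in the paper, so that $\int_0^{L_\es}$ stands for $\sum_j\int_0^{L_j}$), I would split each interval $[0,L_j]$ into the ``large-mass'' set $A_j^\lambda=\{s:M_j(s)/\es\ge 1-\lambda\}$ and its complement $B_j^\lambda=[0,L_j]\setminus A_j^\lambda$, and estimate $\int|\tilde\alpha_j'|^2\,\es\,ds$ on each piece using the two defining properties of the modified angle from Definition~\ref{modified}: $\tilde\alpha_j=\alpha_j$ on $A_j^\lambda$ (so $\tilde\alpha_j'=\alpha_j'$ a.e.\ there), and $|\tilde\alpha_j'|\le 2/(\es(1-\lambda))$ everywhere. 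Since $\lambda\in(0,1)$ is the fixed parameter used to build the modified curves, the resulting constant $C$ will depend only on $\lambda$ and on the energy bound $\Lambda$.

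On $A_j^\lambda$ I would compare with the last term of Proposition~\ref{prop:inequality}: there $M_j(s)\ge\es(1-\lambda)$ and $0<\sin\beta_j(s)\le1$, so its integrand $\frac1{4\sin\beta_j(s)}\bigl(M_j(s)/(\es\sin\beta_j(s))\bigr)^4(\alpha_j'(s))^2$ is pointwise at least $\frac{(1-\lambda)^4}{4}(\alpha_j'(s))^2=\frac{(1-\lambda)^4}{4}(\tilde\alpha_j'(s))^2$. Summing over $j$ and using that this term alone is bounded by $\G_\es(\ue)$ --- all three terms on the right-hand side of Proposition~\ref{prop:inequality} being non-negative --- gives $\sum_j\int_{A_j^\lambda}|\tilde\alpha_j'|^2\,\es\,ds\le \frac{4}{(1-\lambda)^4}\G_\es(\ue)\le \frac{4\Lambda}{(1-\lambda)^4}$. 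On $B_j^\lambda$ the modified angle need not agree with $\alpha_j$, so I would use only the crude bound $|\tilde\alpha_j'|\le 2/(\es(1-\lambda))$ and absorb the factor $\es^{-2}$ into the smallness of $B_j^\lambda$: since $M_j(s)/\es<1-\lambda$ there, $1<\lambda^{-2}\bigl(M_j(s)/\es-1\bigr)^2$, whence $|\tilde\alpha_j'(s)|^2\le \frac{4}{\lambda^2(1-\lambda)^2}\es^{-2}\bigl(M_j(s)/\es-1\bigr)^2$. Summing over $j$ and comparing with the second term of Proposition~\ref{prop:inequality} gives $\sum_j\int_{B_j^\lambda}|\tilde\alpha_j'|^2\,\es\,ds\le \frac{4\Lambda}{\lambda^2(1-\lambda)^2}$. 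Adding the two contributions proves the lemma with $C=4\Lambda\bigl((1-\lambda)^{-4}+\lambda^{-2}(1-\lambda)^{-2}\bigr)$.

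I do not expect a real obstacle; the argument is essentially a two-line consequence of Proposition~\ref{prop:inequality} and the construction of Definition~\ref{modified}. The only point requiring a word of care is the set of $s\notin E_j$, where $\alpha_j$ and $\beta_j$ are a priori undefined: following the convention of \cite{PeletierRoegerTA} one sets $M_j(s)=0$ on this set, so such $s$ automatically lie in $B_j^\lambda$ and are covered by the second estimate (the relevant integrand of Proposition~\ref{prop:inequality} reduces to $\es^{-2}$ there, which is exactly what the $B_j^\lambda$-bound exploits), while contributing nothing to the curvature term.
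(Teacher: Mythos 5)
Your proof is correct and follows essentially the same route as the paper's: split $[0,L_\es]$ according to whether $M_\es(s)/\es$ is bounded below, bound the large-mass part by the curvature term of Proposition~\ref{prop:inequality} (where $\tilde\alpha'=\alpha'$), and control the complement by combining the Lipschitz bound \pref{boundatilde} with the $(M/\es-1)^2$ term of the same estimate. The only cosmetic differences are that the paper fixes $\lambda=\tfrac12$ and phrases the complement estimate as a measure bound $|A_\es^c|\leq C\es$ rather than your pointwise comparison $1<\lambda^{-2}(M/\es-1)^2$; your version, keeping the $\lambda$ of Definition~\ref{modified} throughout, is if anything slightly cleaner.
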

\begin{proof}
Consider again (see \pref{ae}-\pref{smallmass}) the set
$$ A_\es:=\left\{ s\in [0,L_\es]: \frac{M_\es(s)}{\es}\geq\frac 12 \right\}, $$
by which definition we have
$$ \F_\es(u_\es)-1 \geq \int_0^{L_\es} \left( 1- \frac{M_\es(s)}{\es}\right)^2 \es ds \geq \int_{A_\es^c} \frac 14 \,\es ds = \frac \es 4 |A_\es^c|,$$
and therefore by (\ref{gebound})
\begin{equation*}
		|A_\es^c|\leq C\es.
\end{equation*}
Then, by definition of $\tilde{\alpha}_\e$ (see (\ref{boundatilde}))
\begin{align*}
\int_{A_\es^c} |\tilde{\alpha}_\e'(s)|^2\, \es ds &\leq |A_\es^c|\left( \frac{2}{\es(1-\frac 12)}\right)^2 \es \leq C,\\
\int_{A_\es} |\tilde{\alpha}_\e'(s)|^2 \,\es ds &\leq 2^4\int_0^{L_\es} \left( \frac{M_\es(s)}{\es}\right)^4|\tilde{\alpha}_\e'(s)|^2\, \es ds \leq  \G_\e(\ue) \leq C.
\end{align*}
\end{proof}

Thus, using (\ref{convlone}), (\ref{convltwo}), (\ref{convinfty}), and (\ref{alphabound}) we compute
$$ \lim_{\es \ra 0} \int_{\o \times \o} \rho^k(x-y)|P_\es(x)-P_\es(y)|\,d\mue(x)\,d\mue(y)\quad \leq C\left( \frac {1}{k\ell} + \ell + 0 +0 \right).$$
This implies that $\forall\, \ell>0$
$$ \lim_{k \ra +\infty}\lim_{\es \ra 0}\ \int_{\o \times \o} \rho^k(x-y)|P_\es(x)-P_\es(y)|\,d\mue(x)\,d\mue(y) \leq C\ell,$$
and by the arbitrary choice of $\ell$ we obtain (\ref{eq:youngzero}).
\qed

\begin{proof}[Proof of Proposition \ref{estimatecomp}.]
\newcommand{\sumi}{\sum_{i=1}^J}
\newcommand{\sumj}{\sum_{j=1}^J}
\newcommand{\inti}{\int_0^{L_i}}
\newcommand{\intj}{\int_0^{L_j}}
\newcommand{\hsp}{\hspace{-0.3cm}}

For sake of notation, we drop the index $\es$ throughout this whole section. Owing to Proposition \ref{esttg}, it is sufficient to estimate separately the following four terms:
\begin{eqnarray*}
		E_1 &=& \ds \frac 1\ell \sum_{i=1}^J \sum_{j=1}^J\int_0^{L_i} \int_0^{L_j} \rho^k(\g_j(s)-\g_i(t)) |\g_j(s)-\g_i(t)|\,\es ds\,\es dt\\
		E_2 &=& \ds \ell^{1/2} \sum_{i=1}^J\sum_{j=1}^J \int_0^{L_i}\hspace{-0.2cm} \int_0^{L_j}\hspace{-0.2cm} \rho^k(\g_j(s)-\g_i(t))\min\left\{\int_{s-\ell}^{s+\ell}\hspace{-0.3cm}|\tilde{\alpha}'_j(\sigma)|^2d\sigma\, ,\,  2\!\!\int_{0}^{L_j} \hspace{-0.3cm} |\tilde{\alpha}'_j(\sigma)|^2d\sigma\right\}^{1/2} \hspace{-0.5cm}\es ds\,\es dt\\
		E_3 &=& \ds \frac 1\ell \sum_{i=1}^J\sum_{j=1}^J \int_0^{L_i}\hspace{-0.2cm} \int_0^{L_j}\hspace{-0.2cm} \rho^k(\g_j(s)-\g_i(t))\min\left\{\int_{s-\ell}^{s+\ell}\hspace{-0.4cm}|\tilde{\g}'_j(\sigma)-\g'_j(\sigma)|\, d\sigma ,\,  2\!\!\int_{0}^{L_j} \hspace{-0.3cm} |\tilde{\g}'_j(\sigma)-\g'_j(\sigma)|\,\sigma\right\}\es ds\,\es dt\\
		E_4 &=& \ds \sum_{i=1}^J \sum_{j=1}^J\int_0^{L_i} \int_0^{L_j} \rho^k(\g_j(s)-\g_i(t))|\tilde{\g}'_j(s)-\g'_j(s)|\,\es ds\,\es dt.
\end{eqnarray*}	
Recall that 
$$ \rho^k \conv \mue (x)=\int_\o \rho^k(x-y)d\mue(y)=\sum_{i=1}^J\int_0^{L_i} \rho^k(x-\g_i(t))\es dt,$$
and for $1\leq p<\infty$
$$ \| \rho^k \conv \mue \|^p_{L^p(\o;\mue)}=\int_\o |\rho^k \conv \mue(x)|^p d\mue(x)\leq (\es L)^{p-1}\sumj \sumi \intj \inti |\rho^k(\g_j(s)-\g_i(t))|^p \es dt\, \es ds.$$
\smallskip

\noindent \textit{Estimate for $E_1$.} Since supp$(\rho^k) = \overline{B(0,1/k)}$, it holds:
$$ E_1 \leq \frac 1\ell \sum_{i=1}^J\sum_{j=1}^J\int_0^{L_i} \int_0^{L_j} \rho^k(\g_j(s)-\g_i(t)) \frac 1k \,\es ds\,\es dt = \frac {1}{k\ell} \left\| \rho^k \conv \mu_\e\right\|_{L^1(\o;\mu_\es)}.$$
\smallskip

\noindent \textit{Estimate for $E_2$.}
Let $J_1 \subset \{1,\ldots,J\}$ be the set of indexes such that $\ell \leq L_j$, and $J_2$ be the set of indexes such that $\ell > L_j$. For every $j \in J_1$ let $N_j:=\lfloor \frac{L_j}{\ell} \rfloor$ and define $\tau:=L/N$. Then $ \ell \leq \tau < 2 \ell, $ and we can partition the interval $[0,L_j]$ into $N_j$ subsequent subintervals $I_j^n:=[n\tau,(n+1)\tau],\ n=0,\ldots,N_j-1$. Define also $I_j^{-1}:=[-\tau,0]$, and $I_j^{N_j}:=[L_j,L_j+\tau]$). 
Let
$$E_{2,1} :=  \ell^{1/2} \sum_{j\in J_1} \sumi \intj \hspace{-0.2cm} \inti\hspace{-0.2cm} \rho^k(\g_j(s)-\g_i(t))\left(\int_{s-\ell}^{s+\ell}|\tilde{\alpha}'_j(\sigma)|^2d\sigma\right)^{1/2} \hspace{-0.3cm}\es dt\,\es ds,$$
$$E_{2,2} :=  \ell^{1/2} \sum_{j\in J_2} \sumi \intj \hspace{-0.2cm} \inti \hspace{-0.2cm} \rho^k(\g_j(s)-\g_i(t))\left(2\!\int_{0}^{L_j}|\tilde{\alpha}'_j(\sigma)|^2d\sigma\right)^{1/2} \hspace{-0.3cm}\es dt\,\es ds,$$
(so that $E_2 = E_{2,1} + E_{2,2}$). If $s\in I_j^n$, then $[s-\ell, s+\ell]\subset I_j^{n-1}\cup I_j^{n}\cup I_j^{n+1}$, so that
$$ E_{2,1} \leq \ell^{1/2} \sum_{j\in J_1}\sum_{n=0}^{N_j-1} \sumi\int_{I_j^n} \inti \rho^k(\g_j(s)-\g_i(t)) \left(\int_{I_j^{n-1}\cup I_j^{n}\cup I_j^{n+1}} \hspace{-0.8cm}|\tilde{\alpha}'_j(\sigma)|^2d\sigma\right)^{1/2}\es dt\,\es ds  $$
$$ \leq \ell^{1/2} \sum_{j\in J_1}\sum_{n=0}^{N_j-1} \left( \int_{I_j^n}\sumi\inti \rho^k(\g_j(s)-\g_i(t)) \es dt\,\es ds\right) \left( \int_{I_j^{n-1}\cup I_j^{n}\cup I_j^{n+1}} \hspace{-0.8cm} |\tilde{\alpha}'_j(\sigma)|^2d\sigma \right)^{1/2}$$
$$ \leq \ell^{1/2} \sum_{j\in J_1}\sum_{n=0}^{N_j-1} \left( \int_{I_j^n} \rho^k \conv \mu_\e (\g_j(s))\, \es ds\right) \left( \int_{I_j^{n-1}\cup I_j^{n}\cup I_j^{n+1}} \hspace{-0.8cm} |\tilde{\alpha}'_j(\sigma)|^2d\sigma \right)^{1/2}.$$
Now we separate the integrals of $\rho^k$ and $\tilde{\alpha}'_j$ using H\"older's inequality. 
$$ E_{2,1} \leq \ell^{1/2} \sum_{j\in J_1}\left(\sum_{n=0}^{N_j-1} \left( \int_{I_j^n}\rho^k \conv \mu_\e (\g_j(s))\,\es ds\right)^2\right)^{1/2} \left(\sum_{n=0}^{N_j-1}\int_{I_j^{n-1}\cup I_j^{n}\cup I_j^{n+1}} \hspace{-0.8cm} |\tilde{\alpha}'_j(\sigma)|^2d\sigma \right)^{1/2}$$
$$ \leq \ell^{1/2} \left(\sum_{j\in J_1}\sum_{n=0}^{N_j-1} \left( \int_{I_j^n}\rho^k \conv \mu_\e (\g_j(s))\,\es ds\right)^2\right)^{1/2} \left(\sum_{j\in J_1} \sum_{n=0}^{N_j-1} \int_{I_j^{n-1}\cup I_j^{n}\cup I_j^{n+1}} \hspace{-0.8cm} |\tilde{\alpha}'_j(\sigma)|^2d\sigma \right)^{1/2}$$
$$ \leq \ell^{1/2} \left(\sum_{j\in J_1}\sum_{n=0}^{N_j-1} \left( \int_{I_j^n} \rho^k\conv\mue(\g_j(s))\,\es ds\right)^2\right)^{1/2}\left( 3  \sum_{j\in J_1} \intj |\tilde{\alpha}'_j(\sigma)|^2d\sigma \right)^{1/2}.$$
Using
$$ \int_{I_j^n} \rho^k\conv\mue (\g_j(s))\,\es ds \leq \es|I_j^n|\sup_{s\in [0,L_j]} |\rho^k\conv\mue (\g_j(s)) | \leq \es 2\ell \sup_{x \in \o} |\rho^k\conv\mue (x)| $$
we find
$$ E_{2,1} \leq \ell^{1/2} \left( 2\es \ell \|\rho^k\conv\mue\|_{L^{\infty}(\o)}\sum_{j\in J_1}\sum_{n=0}^{N_j-1} \int_{I_j^n} \rho^k\conv\mue (\g_j(s))\,\es ds \right)^{1/2} \left( 3 \sumj\intj |\tilde{\alpha}_j'(\sigma)|^2d\sigma \right)^{1/2}$$
$$\leq  \ell^{1/2} \left( 2\es \ell \|\,\rho^k\conv\mue\,\|_{L^{\infty}(\o)}\sumj\intj \rho^k\conv\mue(\g_j(s))\,\es ds \right)^{1/2} \left( 3\sumj \intj |\tilde{\alpha}'(\sigma)|^2d\sigma \right)^{1/2}$$
$$\leq  \ell\sqrt{2} \left( {\|\rho^k\conv\mue\|}_{L^{\infty}(\o)} {\|\rho^k\conv\mue\|}_{L^1(\o;\mue)} \right)^{1/2} \left( 3 \sumj \intj |\tilde{\alpha}'(\sigma)|^2\es\,d\sigma \right)^{1/2}.$$
In the case $\ell>L_j$, using again H\"older's inequality we find
$$ E_{2,2}\leq (2\ell)^{1/2} \left(\sum_{j\in J_2} \left(\intj \sumi\inti\hspace{-0.2cm} \rho^k(\g_j(s)-\g_i(t))\,\es dt\,\es ds\right)^2\right)^{1/2}\left(\sum_{j\in J_2}\!\int_{0}^{L_j}|\tilde{\alpha}'_j(\sigma)|^2d\sigma\right)^{1/2},$$
$$\leq (2\ell)^{1/2} \left(\sum_{j\in J_2} \left( \intj \rho^k\conv\mue(\g_j(s))\,\es ds\right)^2\right)^{1/2}\left( \sumj \intj |\tilde{\alpha}'_j(\sigma)|^2d\sigma \right)^{1/2}.$$
Arguing as before we find
$$ \intj \rho^k\conv\mue (\g_j(s))\,\es ds \leq \es|L_j|\sup_{s\in [0,L_j]} |\rho^k\conv\mue (\g_j(s)) | \leq \es \ell \sup_{x \in \o} |\rho^k\conv\mue (x)|, $$
so that
$$ E_{2,2} \leq (2\ell)^{1/2} \left( \es \ell \|\rho^k\conv\mue\|_{L^{\infty}(\o)}\sum_{j\in J_2} \intj  \rho^k\conv\mue (\g_j(s))\,\es ds \right)^{1/2} \left( \sumj\intj |\tilde{\alpha}_j'(\sigma)|^2d\sigma \right)^{1/2}$$
$$\leq  \ell\sqrt{2} \left( {\|\rho^k\conv\mue\|}_{L^{\infty}(\o)} {\|\rho^k\conv\mue\|}_{L^1(\o;\mue)} \right)^{1/2} \left( \sumj\intj |\tilde{\alpha}_j'(\sigma)|^2\,\es\,d\sigma \right)^{1/2}.$$

\smallskip

\noindent \textit{Estimate for $E_3$.}
Arguing as we did for $E_2$, we divide $E_3$ into
$$E_{3,1} :=  \frac 1\ell \sum_{j\in J_1} \sumi \intj \hspace{-0.2cm} \inti\hspace{-0.2cm} \rho^k(\g_j(s)-\g_i(t))\left(\int_{s-\ell}^{s+\ell}|\tilde{\g}'_j(\sigma)-\g'_j(\sigma)|\,d\sigma\right) \es dt\,\es ds,$$
$$E_{3,2} :=  \frac 1\ell \sum_{j\in J_2} \sumi \intj \hspace{-0.2cm} \inti \hspace{-0.2cm} \rho^k(\g_j(s)-\g_i(t))\left(2\!\intj|\tilde{\g}'_j(\sigma)-\g'_j(\sigma)|\,d\sigma\right) \es dt\,\es ds.$$
and, using Jensen's inequality,
	$$ \left(\int_{s-\ell}^{s+\ell}|\tilde{\g}'_j(\sigma)-\g'_j(\sigma)|\,d\sigma\right)^2\leq (2\ell)\int_{s-\ell}^{s+\ell}|\tilde{\g}'_j(\sigma)-\g'_j(\sigma)|^2\,d\sigma,$$
	$$ \left(2\intj|\tilde{\g}'_j(\sigma)-\g'_j(\sigma)|\,d\sigma\right)^2\leq (4L_j)\intj|\tilde{\g}'_j(\sigma)-\g'_j(\sigma)|^2\,d\sigma,$$
we compute:
\begin{align} E_{3,1}+E_{3,2} &\leq \frac C\ell\left( \es \ell {\|\rho^k\conv\mue\|}_{L^{\infty}(\o)} {\|\rho^k\conv\mue\|}_{L^1(\o;\mue)} \right)^{1/2}\left( \ell \sumj\intj |\tilde{\g}'_j(\sigma)-\g'_j(\sigma)|^2\,d\sigma \right)^{1/2},\nonumber\\
	&\leq  C\left({\|\rho^k\conv\mue\|}_{L^{\infty}(\o)} {\|\rho^k\conv\mue\|}_{L^1(\o;\mue)} \right)^{1/2}\left( \sumj\intj |\tilde{\g}'_j(\sigma)-\g'_j(\sigma)|^2\es\,d\sigma \right)^{1/2}.
\end{align}	
As in \pref{ae}, we define
$$ A_{\es,j}:=\left\{s\in[0,L_j]: \frac{M_j(s)}{\es}\geq\frac 12\right\}.$$
Then (see \pref{gate}-\pref{gate3} and \pref{bigmass}, \pref{smallmass}) it holds
$$ \intj |\tilde{\g}'_j(\sigma)-\g'_j(\sigma)|^2\es\, ds \leq C \int_{A_{\es,j}} \left(\frac{1}{\sin \beta_j(s)} -1\right)\left(\frac{M_j(s)}{\es} \right)^2 \es\, ds + C\int_{A_{\es,j}^c} \left(1-\frac{M_j(s)}{\es}\right)^2\es\, ds.$$
Finally, owing to Proposition \ref{prop:inequality} we obtain
\begin{equation}\label{estgammat2}
	\left( \sumj\intj |\tilde{\g}'_j(\sigma)-\g'_j(\sigma)|^2\es\,d\sigma \right)^{1/2} \leq C\es \left( \mathcal{G}_\es(u)\right)^{1/2}.
\end{equation} 
\smallskip

\noindent \textit{Estimate for $E_4$.}
$$ E_4= \sumj\intj \left(\sumi \inti \rho^k(\g_j(s)-\g_i(t))\, \es dt\right)|\tilde{\g}'_j(\sigma)-\g'_j(\sigma)|\,\es\, ds$$
$$  = \sumj \intj \rho^k\conv \mue (\g_j(s))\, |\tilde{\g}'_j(\sigma)-\g'_j(\sigma)|\,\es\, ds $$
$$ \leq \left( \sumj\intj (\rho^k\conv \mue (\g_j(s)))^2\es ds \right)^{1/2}\left( \sumj\intj |\tilde{\g}'_j(\sigma)-\g'_j(\sigma)|^2\,\es\, ds \right)^{1/2}$$
$$ \leq C\|\rho^k\conv \mue\|_{L^2(\o;\mue)}\es \left( \mathcal{G}_\es(u)\right)^{1/2},$$
where, in the last step, we used \pref{estgammat2}.
\end{proof}



\section{The limsup estimate}\label{sec:limsup}
\setcounter{theo}{0}
\setcounter{equation}{0}

Throughout this section, $\o$ is an open, bounded,  connected subset of $\R^2$, with $C^2$ boundary, and $n$ is the outward normal unit vector to $\partial\o$. We recall that  $\mathcal{K}_0(\o)$ is defined as the set of all $P\in L^2(\o;\R^{2 \times 2})$ such that
\begin{eqnarray}
         P^2 = P && \mbox{a.e. in } \o,\label{prpro}\\
         \rank P=1 && \mbox{a.e. in } \o,\label{prrank}\\
         P \mbox{ is symmetric} && \mbox{a.e. in } \o,\label{prsymm}\\
         \div P \in L^2(\R^2;\R^2) && (\mbox{extended to $0$ outside  }\o),\label{divpint}\\
         P\, \div P = 0  && \mbox{a.e. in } \o.\label{pdivpzero}
\end{eqnarray}
\begin{rem}
\label{rem:divPL2}
        The sense of property (\ref{divpint}) is that the divergence of $P$ (extended to $0$ outside $\o$), in the sense of distributions in $\R^2$, is an $L^2(\R^2)$ function, i.e. there exists $C>0$ such that for any test function $\varphi \in C^{\infty}_c(\R^2,\R^2)$
\begin{equation}\label{distr}
    \left|\int_{\R^2}P(x):\nabla \varphi(x)\,dx\right| \leq C{\|\varphi\|}_{L^2(\R^2)}.
\end{equation}
Since for any $P\in H^1(\o)$
\begin{equation*}
         -\int_\o P:\nabla\varphi\, dx = \int_\o \div P \cdot \varphi\, dx - \int_{\partial\o}\!(P n)\cdot \varphi\, dS,
\end{equation*}
then (\ref{distr}) implies
\begin{equation}\label{ptgbound}
        Pn = 0\quad \mbox{in the sense of traces on }\partial\o.
\end{equation}
\end{rem}
\smallskip

In this section we construct a recovery sequence for each element of the limit set $\mathcal{K}_0(\o)$. Proposition~\ref{lemmalimsup} collects the relevant results, and provides the proof of part~\ref{thetheorem:part2} of Theorem~\ref{thetheorem}.

\begin{prop}[The limsup estimate]\label{lemmalimsup}
          Let $\Omega$ be a tubular neighbourhood of width $2\delta$ and of regularity $C^3$,  and let the sequence $\e_n\to0$ satisfy 
\begin{equation*}
\delta/2\e_n\in \N.
\end{equation*}
If $P\in \mathcal{K}_0(\o)$ there exists a sequence $\{u_n\} \subset K$ such that
	\begin{eqnarray*}
		&u_n \ras \frac 12 																					& \mbox{weakly-\star}\ \mbox{in}\ L^{\infty}(\o),\\
		&\mu_n:= \es_n|\nabla u_n|\ras \frac12 \L^2\,\llcorner\,\o 	& \mbox{weakly-\star}\ \mbox{in}\ RM(\o),\\
	  &(\mu_n,P_n)  \ra \left(\tfrac 12 \L^2 \llcorner\, \o,P\right), & \mbox{strongly, in the sense of Def. }\ref{def:strongconv},
  \end{eqnarray*}
and
        
\begin{equation}\label{ungl:last}
	\limsup_{n\to\infty} \G_{\es_n}(u_n)\leq \frac 18 \int_\o |\,\mbox{\upshape{div}}\, P(x)|^2dx.
\end{equation}
\end{prop}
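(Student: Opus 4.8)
The plan is to exploit Theorem~\ref{th:ee}: since $\mathcal{K}_0(\Omega)$ is non-empty, $\Omega$ is a tubular neighbourhood and $\mathcal{K}_0(\Omega)$ consists of the single element $P(x)=\tau(\pi x)\otimes\tau(\pi x)$, where $\Gamma$ is the core curve (of curvature $\kappa$, length $L$), $\pi:\Omega\to\Gamma$ is the nearest-point projection, and $\tau$ the unit tangent of $\Gamma$. So it suffices to build a recovery sequence for this one $P$. I would pass to Fermi coordinates $\Phi(s,r)=\Gamma(s)+rN(s)$, $s\in\R/L\Z$, $r\in(-\delta,\delta)$, with Jacobian $J(s,r)=1-r\kappa(s)>0$, and take $u_n$ to be a layered pattern whose jump set $\Gamma_n$ is a finite family of closed curves $\gamma_{n,j}=\{r=c_{n,j}(s)\}$ close to the parallel curves $\{r=\mathrm{const}\}$. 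The curves are chosen so that: (i) the two outermost layers carry $u_n=0$, so $u_n\in K$ respects the boundary condition; (ii) $\{u_n=1\}$ occupies, at each $s$, half of the normal segment, which makes $\int_\Omega u_n=|\Omega|/2$ automatic by the parity of $J(s,\cdot)$ in $r$; (iii) consecutive layers bound regions of equal mass, so that the half-stripe mass $M_{n,j}(s)$ of Definition~\ref{defi:coordm} equals $\e_n$ for every $s$ --- this forces $s$-dependent corrections $c_{n,j}(s)-\mathrm{const}=O(\e_n^2)$ whose definition and $L^2$-control need $\kappa\in C^1$, i.e.\ $\partial\Omega\in C^3$; (iv) the resonance condition $\delta/2\e_n\in\N$ is used to fit an integer number of optimal-width layers exactly into $\Omega$ and to close the family up into closed curves. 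Since the normal to $\gamma_{n,j}$ is, up to $O(\e_n^2)$, the normal to the parallel curve through the same point, $P_n=\gamma_{n,j}'\otimes\gamma_{n,j}'=P\circ\Phi+O(\e_n^2)$ on $\Gamma_n$.

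The three convergence statements follow from Riemann-sum estimates over the $\sim\e_n^{-1}$ layers. One has $u_n\ras\tfrac12$ in $L^\infty(\Omega)$ because $u_n$ oscillates between $0$ and $1$ at scale $\e_n$ with average $\tfrac12$ on each period; $\mu_n=\e_n|\nabla u_n|\ras\tfrac12\L^2\llcorner\Omega$ because $\int g\,d\mu_n=\e_n\sum_j\int_0^{L_{n,j}}g(\gamma_{n,j}(s))\,ds$ is a Riemann sum for $\tfrac12\int_\Omega g$; and $(\mu_n,P_n)\ra(\tfrac12\L^2\llcorner\Omega,P)$ strongly in the sense of Definition~\ref{def:strongconv}, because the same computation with $P_n$ in place of $1$ gives weak convergence (using $P_n\approx(\tau\circ\pi)\otimes(\tau\circ\pi)$), while $\int|P_n|^2\,d\mu_n=\mu_n(\R^2)\to\tfrac12|\Omega|=\int_\Omega|P|^2\,dx$ gives convergence of the norms.

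For the energy I would evaluate $\F_{\e_n}(u_n)=\e_n\int|\nabla u_n|+\tfrac1{\e_n}d(u_n,1-u_n)$ via the ray parametrisation of Section~\ref{sec:rays}. The first term is $\e_n\sum_j L_{n,j}$. In the second, the normal rays are orthogonal to the layers, so $\sin\beta_{n,j}\equiv1$, and with $M_{n,j}(s)\equiv\e_n$ the formula for $d(u_n,1-u_n)$ reads
\[
d(u_n,1-u_n)=\sum_j\int_0^{L_{n,j}}\int_0^{\e_n}\bigl[\tra_{n,j}(s,m)-\tra_{n,j}(s,m-\e_n)\bigr]\,dm\,ds.
\]
Expanding $\tra_{n,j}(s,m)=m+\tfrac12\alpha_{n,j}'(s)\,m^2+\tfrac12\alpha_{n,j}'(s)^2 m^3+O(m^4)$ from~\pref{def:tra} gives $\int_0^{\e_n}[\,\cdots\,]\,dm=\e_n^2+\tfrac14\alpha_{n,j}'(s)^2\e_n^4+O(\e_n^5)$, uniformly (the $\alpha_{n,j}'$ being bounded, essentially curvatures of near-parallel curves), so the $\e_n^3$-terms cancel and
\[
\F_{\e_n}(u_n)-|\Omega|=\Bigl(2\e_n\sum_j L_{n,j}-|\Omega|\Bigr)+\frac{\e_n^3}{4}\sum_j\int_0^{L_{n,j}}\alpha_{n,j}'(s)^2\,ds+o(\e_n^2).
\]
Converting arc length on $\gamma_{n,j}$ to arc length on $\Gamma$ ($ds_{\gamma_{n,j}}=(1-c_{n,j}\kappa)\,ds_\Gamma+O(\e_n^4)$, $\alpha_{n,j}'=\kappa/(1-c_{n,j}\kappa)+O(\e_n^2)$) and recognising the sum over the $2\e_n$-spaced layers as a Riemann sum,
\[
\frac{\e_n}{4}\sum_j\int_0^{L_{n,j}}\alpha_{n,j}'(s)^2\,ds\;\longrightarrow\;\frac18\int_{-\delta}^{\delta}\!\!\int_0^L\frac{\kappa(s)^2}{1-r\kappa(s)}\,ds\,dr=\frac18\int_\Omega|\div P(x)|^2\,dx,
\]
the last equality because the integral curves of $\tau\circ\pi$ are precisely the parallel curves, of curvature $\kappa/(1-r\kappa)$, so $|\div P(x)|^2=\kappa(s)^2/(1-r\kappa(s))^2$ while the area element is $(1-r\kappa)\,ds\,dr$. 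Dividing by $\e_n^2$ and letting $n\to\infty$ yields $\limsup_n\G_{\e_n}(u_n)\le\tfrac18\int_\Omega|\div P|^2\,dx=\G_0(\tfrac12\L^2\llcorner\Omega,P)$.

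The main obstacle, and the place where the hypotheses genuinely enter, is showing that the interface-length remainder $2\e_n\sum_j L_{n,j}-|\Omega|$ is $o(\e_n^2)$. A naive equispaced-in-$r$ choice of layers fails twice over: it produces an $O(1)$ defect in $M_{n,j}/\e_n-1$, which spoils the second term of Proposition~\ref{prop:inequality} and hence the limit, and an $O(\e_n)$ defect in the total length $\sum_j L_{n,j}$. Both are cured simultaneously by the mass-balanced layering of point (iii) --- which is why the $O(\e_n^2)$ normal corrections $c_{n,j}(s)$ must be defined with enough regularity, forcing the $C^3$ hypothesis on $\partial\Omega$ (cf.\ Remark~\ref{rem:higher-regularity}) --- together with the resonance condition $\delta/2\e_n\in\N$, which lets an integer number of optimal-width layers tile $\Omega$ exactly and the construction be closed up into closed curves. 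Verifying this cancellation, and bookkeeping the higher-order remainders in the expansion above, is the technical heart of the proof.
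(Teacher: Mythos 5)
Your overall strategy --- a layered construction in tubular coordinates, transport along the normal rays, Taylor expansion of the per-layer cost, and Riemann summation over the $\sim\e_n^{-1}$ layers --- is exactly the paper's. The gap sits precisely in the step you yourself defer as ``the technical heart'': the existence of an exactly mass-balanced layering with $M_{n,j}(s)\equiv\e_n$. Your accounting needs this hypothesis twice, and the two uses are not independent errors that might cancel. First, $M_{n,j}\equiv\e_n$ is what kills the bracket: by the parametrization \eqref{eq:change-of-variables1}, $|\Omega|=\int u_n+\int(1-u_n)=2\sum_j\int_0^{L_{n,j}}M_{n,j}(s)\,ds$, so $2\e_n\sum_jL_{n,j}-|\Omega|=0$ \emph{exactly} under the hypothesis and is \emph{not} small without it. Second, it is what licenses substituting $\e_n$ for $M_{n,j}$ inside the transport-cost expansion. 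The constraint itself couples all interfaces simultaneously (the half-band mass is measured up to the watershed with the neighbouring interface, whose position is also unknown), and you give no construction or solvability argument; note for instance that it forces every half-band adjacent to $\gamma_{n,j}$ to have mass $\e_nL_{n,j}$, a rigid condition in a domain where parallel curves at different depths have different lengths. Moreover, for any ``natural'' near-parallel layering --- e.g.\ the paper's, where each $4\e$-stripe is bounded by exact parallel curves and split into equal areas relative to its \emph{central} curve --- one finds $M_{n,j}=\e_n-\tfrac12\kappa^2\e_n^3+O(\e_n^4)$ relative to the interface's own arclength. That $O(\e_n^3)$ deviation enters $M_{n,j}^2$ at order $\e_n^4$, i.e.\ at the \emph{same} order as your curvature term $\tfrac14\alpha_{n,j}'^2\e_n^4$: it changes the per-interface cost from $\e_n^2+\tfrac14\kappa^2\e_n^4$ to $\e_n^2-\tfrac34\kappa^2\e_n^4$, and summed over the $O(\e_n^{-1})$ interfaces this is an $O(1)$ discrepancy in $\G_{\e_n}$. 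So your expansion is self-consistent only under the unverified exact-balance hypothesis.

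The paper closes this gap with different bookkeeping. It anchors everything to the central parallel curves $\g_\es^h$ of stripes of width $4\es$, for which the area identity $|\S_\es^h|=4\es L_\es^h$ is \emph{exact} (the $\int t\kappa\,dt$ term vanishes by symmetry of the $t$-interval), defines $u$ by equal-area splitting via the explicit $\rho_\pm(s)$, exhibits the normal transport map $T$ and verifies its admissibility (rather than invoking the exact formula for $d$, which presupposes optimality; only the upper bound $d_1\le\int|x-T(x)|u$ is needed), and Taylor-expands both terms about the central curve: the perimeter exceeds $2\es L_\es^h$ by $\es^3\int\kappa^2$, the transport cost falls short of $2\es L_\es^h$ by $\tfrac12\es^3\int\kappa^2$, the odd-order terms cancel between the two interfaces of each stripe, and the net excess $\tfrac12\es^3\int\kappa^2$ per stripe Riemann-sums (via $\tilde\mu_\es\ras\tfrac14\L^2\llcorner\o$, Lemma~\ref{trivial}) to $\tfrac18\es^2\int_\o|\div P|^2$. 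To repair your version you must either actually construct the exactly balanced layering with the required regularity, or keep the $O(\e_n^3)$ correction to $M_{n,j}$ throughout the expansion together with the corresponding $O(\e_n^2)$ excess of $2\e_n\sum_jL_{n,j}$ over $|\Omega|$ and verify that they combine to the stated limit; as written the proposal does neither. The remaining parts of your argument (convergence statements, identification of the limit via $|\div P|^2=\kappa^2/(1-r\kappa)^2$, role of the resonance condition and of $C^3$ regularity) match the paper.
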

For this purpose, we will use the following characterization, given in \cite{PeletierVeneroniTA}:
\begin{theo}
\label{th:ee6}
Among domains $\Omega$ with $C^2$ boundary, $\mathcal K_0(\Omega)$ is non-empty if and only if $\Omega$ is a tubular domain. In that case $\mathcal K_0(\Omega)$ consists of a single element.
\end{theo}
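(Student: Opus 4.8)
The plan is to reduce the statement to a rigidity result for unit, divergence-free line fields with square-integrable curvature, and then to read the geometry of $\Omega$ off the induced foliation. The first step is a reformulation: any $P\in\mathcal{K}_0(\Omega)$ can be written as $P=e\otimes e$ for a measurable unit line field $e$ (defined up to sign), and the five defining conditions translate as follows. The properties $P^2=P$, $\rank P=1$ and symmetry are equivalent to $|e|=1$; since $|e|\equiv1$ forces $e\cdot((\nabla e)\cdot e)=0$, the identity $\div P=(\nabla e)\cdot e+e\,\div e$ gives $P\,\div P=e\,\div e$, so the condition $P\,\div P=0$ is exactly $\div e=0$ (interpreted through $P$, or locally for an oriented representative once the regularity below is available). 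Then $\div P=(\nabla e)\cdot e$ is precisely the curvature vector of the integral curves of $e$, the requirement $\div P\in L^2(\R^2)$ says this curvature lies in $L^2$, and, as in Remark~\ref{rem:divPL2}, the extension-by-zero condition forces $P n=0$ on $\partial\Omega$, i.e. the line field is tangent to the boundary. On a simply connected patch, $\div e=0$ provides a stream function $\psi$ with $e=\nabla^\perp\psi$ and $|\nabla\psi|=1$, the eikonal equation, whose characteristics run in the normal direction $e^\perp$.

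Next I would invoke the regularity theory of Jabin, Otto and Perthame~\cite{JabinOttoPerthame02} for unit divergence-free fields: off a locally finite set of points, $e$ is locally Lipschitz, its characteristics are straight segments along which $e$ is constant, and two such segments cannot cross transversally in the interior, since $e$ would then take two distinct values at the crossing. The hypothesis $\div P\in L^2$, which is strictly stronger than the entropy condition of~\cite{JabinOttoPerthame02}, additionally excludes interior singular points of this foliation: near such a point the curvature of the lines is comparable to the reciprocal of the distance to it (as in the target and U-turn patterns discussed after Theorem~\ref{thetheorem}), and $\int_0^1 r^{-2}\,r\,dr=+\infty$ contradicts $\div P\in L^2$. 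Hence $\Omega$ is foliated by pairwise disjoint straight segments, each a maximal normal characteristic with both endpoints on $\partial\Omega$.

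The geometry is then forced. At an endpoint $y$ of such a segment $N$, $P n(y)=0$ gives $e(y)\perp n(y)$, so $e(y)$ is tangent to $\partial\Omega$; since $e\equiv e_0$ on $N$ and $N$ points in direction $e_0^\perp$, the segment $N$ meets $\partial\Omega$ orthogonally at both ends. Parametrising one boundary component by arclength as $s\mapsto y_+(s)$ with inward normal $n_+(s)$ and writing the opposite endpoint as $y_-(s)=y_+(s)+L(s)\,n_+(s)$, orthogonality at $y_-$ gives $\dot y_-(s)\perp n_+(s)$, and $\dot y_+(s)\perp n_+(s)$ as well, so differentiating and taking the inner product with $n_+(s)$ yields $\dot L(s)=0$: every segment has the same length $2\delta$. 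Thus $\partial\Omega$ consists of two parallel curves at distance $2\delta$, the midpoint curve $\Gamma(s)=y_+(s)+\delta\,n_+(s)$ is a closed $C^2$ curve, disjointness of the segments forces $\delta<\|\k_\Gamma\|_\infty^{-1}$, and $\Omega=\Gamma+B(0,\delta)$ is a tubular domain; the same computation shows that the stripes are the curves parallel to $\Gamma$ and that $e(x)=\pm\tau(\pi x)$, so $P(x)=\tau(\pi x)\otimes\tau(\pi x)$ is the unique element of $\mathcal{K}_0(\Omega)$. The converse is a direct check: for a tubular domain, $P(x):=\tau(\pi x)\otimes\tau(\pi x)$ satisfies \pref{prpro}--\pref{pdivpzero}, because in tube coordinates $\div P$ equals $-\k_\Gamma(1-t\k_\Gamma)^{-1}$ times the unit normal to $\Gamma$, which is bounded (as $|t\k_\Gamma|\le\delta\|\k_\Gamma\|_\infty<1$) and is annihilated by $P$, while $P n=0$ on $\partial\Omega$ since there the outward normal of $\Omega$ is the normal to $\Gamma$.

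The main obstacle is the second step: making rigorous the passage from the weak data ($|e|=1$, $\div e=0$, $\div(e\otimes e)\in L^2$) to the clean conclusion that $\Omega$ is foliated by disjoint straight segments with no interior singularities. This requires adapting the zero-energy-state regularity of~\cite{JabinOttoPerthame02} to the present $L^2$-divergence setting and quantifying the energy cost of a putative interior singularity; by comparison, the global question of orienting the line field is minor, since a tubular domain is topologically an annulus on which the candidate field is orientable and, in any case, the analysis can be carried out locally and patched.
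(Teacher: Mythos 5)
First, a point of reference: this theorem is not proved in the present paper at all. It appears as Theorem~\ref{th:ee}, is restated as Theorem~\ref{th:ee6}, and is imported from the companion work \cite{PeletierVeneroniTA}; the route taken there (summarized by the authors) is to derive from Definition~\ref{limprob} the boundary condition $Pn=0$ (as in Remark~\ref{rem:divPL2}) \emph{and} interior Sobolev regularity $P\in H^1(\Omega)$, to use an orientability criterion for $H^1$ line fields (Ball--Zarnescu) to produce a genuine unit vector field $m$ with $P=m\otimes m$ and $\div m=0$ distributionally in $\R^2$, and then to invoke the rigidity theory of \cite{JabinOttoPerthame02}, which forces a vortex structure whose centre is incompatible with $\div P\in L^2$; what survives is exactly the tubular case with the single field $P=\tau(\pi x)\otimes\tau(\pi x)$. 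Your outline is the same strategy in spirit (reduction to unit divergence-free fields, Jabin--Otto--Perthame rigidity, exclusion of singular points by the non-integrability of $r^{-2}$, then the geometric endgame), and your geometric conclusion --- characteristics meeting $\partial\Omega$ orthogonally at both ends, $\dot L=0$ from differentiating $y_-=y_++Ln_+$, hence parallel boundary curves, tubularity, and uniqueness --- as well as the converse verification, are sound modulo trace-level regularity of $e$ at the endpoints.

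The gaps are the two steps you pass over, and they are the substance of the companion paper rather than details. First, the orientation step is not ``minor'': a unit line field on a domain of unknown topology need not be orientable (half-integer defects), and your justification --- that a tubular domain is an annulus on which the field is orientable --- assumes the conclusion, while ``argue locally and patch'' is precisely where the obstruction lives, since it is global. In \cite{PeletierVeneroniTA} this is resolved by first proving $P\in H^1(\Omega)$ from the conditions \pref{prpro}--\pref{pdivpzero}, and nothing in your argument produces that regularity. Second, your appeal to \cite{JabinOttoPerthame02} hinges on the unproven assertion that $\div(e\otimes e)\in L^2$ together with $\div e=0$ is ``strictly stronger than the entropy condition'': the structure theorem you want (locally Lipschitz away from a locally finite set, straight non-crossing characteristics along which $e$ is constant) is proved there for zero-energy states of a kinetic formulation, and deriving that formulation from the weak data defining $\mathcal K_0(\Omega)$ is exactly the technical heart of the matter --- you name it as ``the main obstacle'' but do not carry it out. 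As it stands, then, your proposal is a correct road map that coincides with the actual proof's route, but with its two load-bearing steps (orientability via $H^1$ regularity, and the passage from $\div P\in L^2$ to the Jabin--Otto--Perthame hypotheses) missing.
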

Recall that a \emph{tubular domain} is a domain in $\R^2$ that can be written as
\[
\Omega = \Gamma + B(0,\delta),
\]
where $\Gamma$ is a simple, closed, $C^2$ curve in $\R^2$ with curvature $\kappa$ and $0<\delta< \|\kappa\|^{-1}_\infty$. In this case the \emph{width} of the domain is defined to be $2\delta$. The unique element $P\in \mathcal(\Omega)$ in the theorem is given by
\[
P(x) = \tau(\pi x)\otimes \tau(\pi x),
\]
where $\pi:\Omega\to\Gamma$ is the orthogonal projection onto $\Gamma$ (which is well-defined by the assumption on $\delta$) and $\tau(x)$ is the unit tangent to $\Gamma$ at $x$. 

\medskip
\begin{rem} By the strong compactness result in Theorem \ref{thetheorem}, any admissible sequence satisfying $\G_{\e_j}(u_{\e_j}) \leq C$ admits a subsequence such that the related measure-function pairs 
strongly converge to a limit $(\frac12 \L,P)$, with $P\in \mathcal{K}_0(\o)$. Thanks to Theorem \ref{th:ee6} we know there is a unique such $P$ and we recover strong convergence for the whole sequence $(\mu_{\e_j},P_{\e_j})$. Thus, in the proof of Proposition \ref{lemmalimsup} we need only to build an admissible recovery sequence and to prove the limsup inequality \pref{ungl:last}.
\end{rem}

\drop{

We refer to \cite{PeletierVeneroniTA} for the proof of Theorem \ref{th:ee6}; here we briefly summarize the underlying ideas. A line field $P$ in $\mathcal{K}_0(\o)$ enjoys very restrictive properties, the main consequences of which are that\\
\begin{tabular}{ll}
        \textit{i)}& the projections lines are parallel to the boundary,\\
        \textit{ii)}& $P \in H^1(\o).$
\end{tabular}\\
Owing to an orientability criterium in two dimensions (see \cite{BAllZarnescu07TA}), these two facts imply that \\
\begin{tabular}{ll}
        \textit{iii)}& there exists a vector field $m \in H^1(\o;\R^2)$ such that $P=m \otimes m$,\\
        \textit{iv)}&  $ |m| = 1$  almost everywhere in $\o$,\\
        \textit{v)}& $\div m = 0  \mbox{ distributionally in } \R^2$.
\end{tabular}\\
Vector fields which satisfy \textit{iv)} and \textit{v)} (but not \textit{iii)}) were shown to have a very rigid behaviour (see \cite{jop}),  so that for $\o$ in a very wide class of domains, which for the moment we call $\mathcal{A}$, there exist\\
\centerline{ $\alpha \in \R$ \mbox{and} $\ds x_0\in \o:\ \ m(x)=\alpha\frac{(x-x_0)^\perp}{|x-x_0|}$\quad a.e. on $\o$.}
The first consequence is that if $ \o \in \mathcal{A}$ and it is bounded, then $\o$ is a disk, and the second is that if $\o$ is a disk, then the point $x_0$ is a singularity for the line field $P$, which forces
$$ \int_\o |\div P(x)|^2dx = +\infty,$$
in contradiction with (\ref{divpint}). The conclusion is that for $\o\in \mathcal{A}$ it holds: $\mathcal{K}_0(\o)=\emptyset$. Thus, we have to restrict ourselves to tubular neighbourhoods.
}

Owing to Theorem \ref{th:ee6}, in the following, we can parametrize the tubular domain $\o$ by level sets of a scalar map $\phi$, whose main properties are: 
\begin{lemma}\label{levelsets}
Let $P\in \mathcal{K}_0(\o)$ and let $\partial\o^0$ be one of the connected components of $\partial\o$, then
\begin{equation}\label{defphi}
     \o\ni x\mapsto\phi(x):=d(x,\partial\o^0),
\end{equation}
satisfies $\phi\in C^2(\o)$, $|\nabla \phi|\equiv 1 $ on $\Omega$, 
\begin{equation*}
        P(x)=\nabla \phi^\perp(x) \otimes \nabla \phi^\perp(x),
\end{equation*}
and it is possible to parametrize every $t$-level set of $\phi$ by a simple, closed, $C^2$-curve $\g_t: [0,L_t]\ra \o,\ |\g'|\equiv 1$, which satisfies
\begin{gather*}
    \{x\in\Omega: \phi(x)=t\}=\{\g_t(s):s\in [0,L_t]\},\\
        \g_t'(s)= \nabla \phi^\perp(\g_t(s))\quad \forall\,s \in [0,L_t],\\
       \g_t''(s)=\mbox{\upshape div} P(\g_t(s))\quad \forall\,s \in [0,L_t].
\end{gather*}
\end{lemma}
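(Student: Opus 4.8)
By Theorem~\ref{th:ee6} the hypothesis $P\in\mathcal K_0(\o)$ forces $\o$ to be a tubular domain, $\o=\Gamma+B(0,\delta)$ with $\Gamma$ a simple closed $C^2$ curve of curvature $\kappa$ and $0<\delta<\|\kappa\|_\infty^{-1}$, and $P$ to be the unique element $P(x)=\tau(\pi x)\otimes\tau(\pi x)$, where $\pi$ is the nearest-point projection onto $\Gamma$ and $\tau$ the unit tangent to $\Gamma$. I would first set up Fermi (normal) coordinates, as in \cite{PeletierVeneroniTA}: writing $r\mapsto\Gamma(r)$ for an arclength parametrization, $n(\Gamma(r))=\Gamma'(r)^\perp$ for the unit normal, and $\Psi(r,\rho):=\Gamma(r)+\rho\,n(\Gamma(r))$, the bound $\delta<\|\kappa\|_\infty^{-1}$ makes the Jacobian $1-\rho\kappa(r)$ of $\Psi$ non-vanishing for $|\rho|<\delta$, and, by the very definition of a tubular domain, $\Psi$ is a $C^1$-diffeomorphism of $(\R/|\Gamma|\Z)\times(-\delta,\delta)$ onto $\o$; in particular the normal lines of $\Gamma$ foliate $\o$ without crossing. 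Choosing the labelling so that $\partial\o^0=\Psi\bigl((\R/|\Gamma|\Z)\times\{-\delta\}\bigr)$, the normal lines of $\partial\o^0$ coincide with those of $\Gamma$, so along each of them the foot point of $x=\Psi(r,\rho)$ on $\partial\o^0$ is $\Psi(r,-\delta)$ and $\phi(x)=d(x,\partial\o^0)=\rho+\delta$. Thus $\phi$ is, up to the shift by $\delta$, the normal coordinate.

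From this identification the first three assertions are short. Since $\o$ lies within the reach of the $C^2$ curve $\partial\o^0$ — exactly the non-crossing property just recorded — the distance function $\phi$ is of class $C^2$ on $\o$ by the classical regularity theory for the distance function, and $|\nabla\phi|\equiv1$. Moreover $\nabla\phi(x)$ is the unit vector from the foot point $\Psi(r,-\delta)$ towards $x$, i.e. $\nabla\phi(x)=n(\Gamma(r))=n(\pi x)$ (since $\partial\o^0$ and $\Gamma$ share normal lines, $\pi x=\Gamma(r)$). Hence $\nabla\phi^\perp(x)=\pm\tau(\pi x)$ and $\nabla\phi^\perp(x)\otimes\nabla\phi^\perp(x)=\tau(\pi x)\otimes\tau(\pi x)=P(x)$.

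For the level sets, fix $t\in(0,2\delta)$. Then $\{\phi=t\}$ is a regular level set of the $C^2$ map $\phi$ (recall $|\nabla\phi|=1$), hence a $C^2$ one-dimensional submanifold; by the Fermi description $\{\phi=t\}=\Psi\bigl((\R/|\Gamma|\Z)\times\{t-\delta\}\bigr)$ it is connected, so a single simple closed $C^2$ curve — while $t=0$ and $t=2\delta$ give the two $C^2$ components of $\partial\o$. I would take $\gamma_t:[0,L_t]\to\o$ to be its arclength parametrization, which is again $C^2$ since the curve is, and fix the orientation so that $\gamma_t'(s)=\nabla\phi^\perp(\gamma_t(s))$ (the unit tangent to a level set of $\phi$ is orthogonal to $\nabla\phi$, hence equal to $\pm\nabla\phi^\perp$). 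Differentiating, $\gamma_t''=\bigl(D\nabla\phi^\perp\bigr)(\gamma_t)\,\nabla\phi^\perp(\gamma_t)$. On the other hand, with $e:=\nabla\phi^\perp$ and $P=e\otimes e$ one has $\div P=(De)\,e+e\,\div e$, and $\div e=\div(\nabla\phi^\perp)=-\partial_1\partial_2\phi+\partial_2\partial_1\phi=0$ by Schwarz' theorem (legitimate as $\phi\in C^2$); therefore $\div P=(De)e$ and $\gamma_t''=\div P(\gamma_t)$, as claimed.

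The one point that requires care is the assertion in the second paragraph that $\phi=d(\cdot,\partial\o^0)$ is genuinely $C^2$ on \emph{all} of $\o$, and not merely on a thin collar of $\partial\o^0$. This reduces to showing that every $x\in\o$, which is at distance $<2\delta$ from $\partial\o^0$, still lies within the reach of $\partial\o^0$: the foot point is unique (the foliation foot point), and no focal point of $\partial\o^0$ is met, because on the side facing $\o$ the focal distance of $\partial\o^0$ is $\ge \|\kappa\|_\infty^{-1}+\delta>2\delta$ wherever $\partial\o^0$ is convex towards $\o$, and there is no focal point at all where it is concave. All of this follows from the tube condition $\delta<\|\kappa\|_\infty^{-1}$ together with the embedded-tube structure of $\o$ supplied by Theorem~\ref{th:ee6} (see \cite{PeletierVeneroniTA}); once it is granted, the remainder is routine manipulation of the normal coordinate and the identity $\div(\nabla\phi^\perp)=0$.
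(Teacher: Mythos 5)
Your proof is correct, and it follows the route the paper itself intends: the paper states Lemma~\ref{levelsets} without proof, presenting it as a package of standard consequences of the tubular characterization in Theorem~\ref{th:ee6}, and your argument supplies exactly those details (Fermi coordinates from the tube structure, $C^2$ regularity of the distance function on the region of unique, non-focal nearest points, and the identity $\div(\nabla\phi^\perp)=0$ giving $\gamma_t''=(De)e=\div P(\gamma_t)$). Your care about why all of $\Omega$ lies within the reach of $\partial\Omega^0$ --- the focal distance bound $\|\kappa\|_\infty^{-1}+\delta>2\delta$ combined with injectivity of the normal foliation --- is precisely the point the paper leaves implicit.
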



\subsection{Building a recovery sequence $\ue$}\label{recovery}
Let $\o$ be a tubular neighbourhood of width $2\delta$, let $P\in \mathcal{K}_0(\o)$ be given and let $\phi\in C^2(\o)$ be the corresponding potential, as in Lemma \ref{levelsets}. The construction of the recovery sequence is an adaptation of the method introduced in \cite{PeletierRoegerTA} and here we divide it into three steps. First we divide the domain $\o$ into stripes $\S_\es$ according to the level sets of $\phi$, and we define a function $u_\es^h$ on $\S_\es$.
Then on every stripe we compute the contribution to $\F_\es(u_\es^h)$ due to the length of the interface and we estimate from above the term due to the Wasserstein distance. Finally we glue together the functions on the stripes in order to get a function $\ue$ on the whole $\o$ and complete the proof of Proposition \ref{lemmalimsup}.  
\medskip

Let $\es$ such that $\delta/2\es \in \N$, set $N_\es:=\delta/2\es$, and define the stripes
\begin{equation}\label{stripes}
	\S^h_\es := \left\{ x\in \o: 4\es h \leq \phi(x)\leq 4\es(h+1) \right\},\ h=0,\ldots,N_\es-1.
\end{equation}
Note that $\o=\bigcup_{h=0}^{N_\es-1} \S^h_\es$ and that each stripe $\S^h_\es$ is a $2\es$-tubular neighbourhood of the curve $\{\phi=2\es(2h+1)\}$. In order to present the proof in a convenient way we exploit this particular geometry and we compute the estimates on the tubular neighbourhood of a generic $C^2$ closed curve $\g$. 
\medskip

\textbf{Step 1 - Construction of $u$ on a stripe $\S$}. 
Let $\g:[0,L]\ra \R^2$ be a $C^2$ closed curve, parametrized by arclength, and let $\S$ be the $2\es$-neighbourhood of $\g$. Let $\nu:[0,L] \ra S^1$ be the unit normal field of $\g$ that satisfies
$$ \nu(s)=\nabla \phi (\g(s)) $$
and let $\k(s)$ be the curvature of $\g$ in direction of $\nu(s)$,
\begin{equation}\label{defcurv}
        \k(s)= -\nu'(s)\cdot\g'(s)=\nu(s)\cdot\g''(s).
\end{equation}
Owing to the geometry of $S$ we can introduce the parametrization
$$ \Phi: [0,L]\times [-2\es,2\es]\ra \R^2,$$
$$ \Phi(s,t):= \g(s)+t\nu(s)$$
and we calculate
\begin{equation}\label{determinant}
    |\det \nabla\Phi(s,t)|=1-t\k(s)\quad \mbox{for }0<s<L,\ -2\es<t < 2\es.
\end{equation}
We recall the mass coordinates (see Def. \ref{defi:coordm})
$$ \mp_s(t)=\mp(s,t):=t-\frac{t^2}{2}\k(s)$$
and the inverse mapping $\tra_s(m)=\mp_s^{-1}(m)$ (see Prop. \ref{prop:coordt}), 
$$ \tra_s(m)=\tra(s,m):=\frac{1}{\k(s)}\left[ 1- (1-2m\k(s))^{1/2}\right].$$
Define two functions $ \rho_+\!,\, \rho_-:[0,L]\ra \R$
$$ \rho_+(s):= \frac{1}{\k(s)}\left( 1-(1-2\es \k(s) + 2\es^2\k(s)^2)^{1/2}\right),$$
$$ \rho_-(s):= \frac{1}{\k(s)}\left( 1-(1+2\es \k(s) + 2\es^2\k(s)^2)^{1/2}\right),$$
so that  
$$ D^+:= \left\{ (s,t): 0\leq s <L,\ 0\leq t <\rho_+(s)\right\},$$
$$ D^-:= \left\{ (s,t): 0\leq s <L,\ \rho_-(s)<t\leq 0\right\},$$
divide $\S$ into two sets having the same area, i.e.
$$ \mp(s,\rho_+(s))=\frac 12 \mp(s,2\es)=\es(1-\es \k(s)), $$
$$ \mp(s,\rho_-(s))=\frac 12 \mp(s,-2\es)=-\es(1+\es \k(s)).$$
Finally we can set
\begin{equation}\label{defustr}
        u(x):=\left\{   \begin{array}{ll}
                                                        1& \mbox{if } x\in \Phi(D^+\!\cup D^-),\\
                                                        0& \mbox{otherwise}.
                                                \end{array} \right.
\end{equation}


\textbf{Step 2 - How to compute \ $ d_1(u,1-u)$.}\ 
We now define an injective transport map $T:\Phi(D^+\!\cup D^-)\ra \R^2,$ between $\{u=1\}$ and $\{u=0\}$ on $\S$, 
$$ T(\Phi(s,t)):= \Phi(s,\tra(s,\mp(s,t)+\es(1-\es\k(s)))),\quad \mbox{ if }(s,t)\in D^+\!, $$
$$ T(\Phi(s,t)):= \Phi(s,\tra(s,\mp(s,t)-\es(1+\es\k(s)))),\quad \mbox{ if }(s,t)\in D^-\!. $$
First we show that $T$ is a proper transport map, i.e. that $\forall\,\eta\in C_c^0(\o)$
\begin{align*}
\int_\o &\eta(T(x))\,u(x)\,dx = \int_{\Phi(D^+\cup D^-)}\hspace{-0.6cm}\eta(T(x))\, dx=\int\!\int_{D^+\cup D^-}\hspace{-0.5cm} \eta(T(\Phi(s,t)))\,|\det \Phi(s,t)|\,ds\,dt\\
&=\int_0^L\int_{\rho_-(s)}^{\rho_+(s)} \eta(T(\Phi(s,t)))\,|\det \Phi(s,t)|\,ds\,dt\\
&=\int_0^L \int_{-\es(1+\es\k(s))}^{\es(1-\es\k(s))} \eta(T(\Phi(s,\tra(s,m))))\,|\det \Phi(s,\tra(s,m))|\,|\tra_s'(m)|\, dm\,ds\\
&=\int_0^L \int_{-\es(1+\es\k(s))}^{\es(1-\es\k(s))} \eta(T(\Phi(s,\tra(s,m))))\, \mp_s'(\mp_s^{-1}(m))\,(\mp_s^{-1})'(m)\, dm\,ds\\
&=\int_0^L \int_{-\es(1+\es\k(s))}^{\es(1-\es\k(s))} \eta(T(\Phi(s,\tra(s,m))))\, dm\,ds\\
&=\int_0^L\! \int_0^{\es(1-\es\k(s))} \hspace{-1.2cm} \eta(\Phi(s,\tra_s(m+\es(1-\es\k(s))))) \,dm\,ds +\int_0^L\! \int_{-\es(1+\es\k(s))}^0 \hspace{-1.2cm} \eta(\Phi(s,\tra_s(m-\es(1+\es\k(s))))) \,dm\,ds\\
&=\int_0^L\! \int_{\es(1-\es\k(s))}^{2\es(1-\es\k(s))} \hspace{-0.6cm} \eta(\Phi(s,\tra_s(m))) \,dm\,ds +\int_0^L\! \int_{-2\es(1+\es\k(s))}^{-\es(1+\es\k(s))} \hspace{-0.6cm} \eta(\Phi(s,\tra_s(m))) \,dm\,ds\\
& =\int_0^L\int_{\rho_+(s)}^{2\es} \eta(\Phi(s,t))\,|\det \Phi(s,t)|\,ds\,dt + \int_0^L\int_{-2\es}^{\rho_-(s)} \eta(\Phi(s,t))\,|\det \Phi(s,t)|\,ds\,dt\\
&=\int_\o \eta(x) (1-u(x))_{|\S}\, dx.
\end{align*}
In view of estimating from above the Wasserstein distance $d_1(u,1-u)$ we compute
\begin{align*}
        \int_{\phi(D^+)} |x-T(x)|\,u(x)\,dx &= \int_0^L\! \int_0^{\es(1-\es\k(s))} |\tra_s(m)-\tra_s(m+\es(1-\es\k(s)))|\,dm\,ds\\
        &=\int_0^L\! \int_0^{\es(1-\es\k(s))}\hspace{-0.5cm} (\tra_s(m+\es(1-\es\k(s)))-\tra_s(m))\,dm\,ds\\
        &=\int_0^L\! \int_{\es(1-\es\k(s))}^{2\es(1-\es\k(s))}\hspace{-0.5cm} \tra_s(m)\,dm\,ds- \int_0^L\! \int_0^{\es(1-\es\k(s))}\hspace{-0.5cm}\tra_s(m)\,dm\,ds.
\end{align*}
In order to simplify the following computations, let $\alpha,\beta\in\R$, and $M:=\es(1-\es\k(s))$. We find
$$ \int_\alpha^{\beta} \tra_s(m)\, dm = \frac{1}{3k^2}\left[ 3k(\beta - \alpha) + (1-2 k\beta)^{3/2}-(1-2k\alpha)^{3/2}\right],$$
$$ \int_M^{2M} \tra_s(m)\, dm -\int_0^{M} \tra_s(m)\, dm = \frac{1}{3k^2}\left[ (1-4Mk)^{3/2}-2(1-2Mk)^{3/2}+1\right].$$
By Taylor expansions (where $\xi=Mk$)
$$(1-4\xi)^{3/2} =1 -6\, \xi + 6\, \xi^2 +4\,\xi ^3 + 6\, \xi^4 + O(\xi^5), $$
$$2(1-2\xi)^{3/2}=2 -6\, \xi + 3\, \xi^2 +\,\xi ^3 + \frac 34\, \xi^4 + O(\xi^5), $$
we obtain
\begin{gather}
\notag
\int_M^{2M} \tra_s(m)\, dm -\int_0^{M} \tra_s(m)\, dm = \frac{1}{3\k^2}\left[3\, \xi^2 + 3\,\xi^3 +\frac{21}{4}\, \xi^4 + O(\xi^5)\right]\\
=\es^2 - \es^3\k(s) -\frac{1}{4}\es^4\k^2(s) +\es^5C(s).
\label{box1}
\end{gather}
In the same way we compute the transport on $\phi(D^-)$: let $M(s):=-\es(1+\es\k(s))$, then
\begin{align*}
        \int_{\phi(D^-)} |x-T(x)|\,u(x)\,dx &= \int_0^L\! \int_{M(s)}^0 |\tra_s(m)-\tra_s(m-\es(1+\es\k(s)))|\,dm\,ds\\
        &=\int_0^L\! \int_{M(s)}^0\hspace{-0.7cm} -(\tra_s(m+\es(1-\es\k(s)))-\tra_s(m))\,dm\,ds\\
        &=\int_0^L\! \int_{2M(s)}^{M(s)}\hspace{-0.7cm} -\tra_s(m)\,dm\,ds+ \int_0^L\! \int_{M(s)}^0\hspace{-0.5cm}\tra_s(m)\,dm\,ds,
\end{align*}
so that
\begin{equation}
\label{box2}
\int_{2M(s)}^{M(s)} -\tra_s(m)\, dm +\int_{M(s)}^0 \tra_s(m)\, dm =\es^2 + \es^3\k(s) -\frac{1}{4}\es^4\k^2(s) +\es^5C(s).
\end{equation}
Combining estimates~\pref{box1} and~\pref{box2} we find
\begin{equation}\label{supdist}
        \frac 1\es d_1(u,(1-u)_{|\S})\leq \frac 2\es  \int_0^{L} \es^2  -\frac{1}{4}\es^4\k^2(s) +\es^5C(s)\, ds  \leq 2 \es L  - \frac12\int_0^{L} \es^2\k^2(s)\,\es ds + \es^3 C
\end{equation}
for the function $u$ defined in (\ref{defustr}) which has support in $\S$.


\textbf{Step 3 - How to compute\ $\ds \int |\nabla u|$.}\ \
We compute the length of the curves which bound supp$(u)$. Define
$$ \gp(s):=\g(s) + \rho_+(s)\,\nu(s),\quad \mbox{and}\quad \gm(s):=\g(s) + \rho_-(s)\,\nu(s),$$
so that
$$ \gp'(s)= \g'(s)+\rho_+'(s)\,\nu(s)+\rho_+(s)\,\nu'(s),$$
and by (\ref{defcurv}) and $|\g'|\equiv 1$ we get
$$ L^+:=\int_0^L |\gp'(s)|\, ds =\int_0^L \left( 1 -2\es \k(s) + 2\es^2 \k^2(s) +(\rho_+')^2(s)\right)^{1/2}ds$$
We compute
$$ \rho_+' =-\frac{\k'}{\k^2}\left[1-\left(1+2\es\k+2(\es\k)^2\right)^{1/2} \right]-\frac{2\es\k'+4\es^2\k\k'}{2\k}\left( 1+2\es\k+2(\es\k)^2 \right)^{1/2},$$
and by Taylor expansion we find
$$ (\rho_+')^2(s)=C\es^4\left(\k'\right)^2(s) + O(\es^5),$$
$$ L^+=\int_0^L \left( 1-\es \k(s) +\frac{\es^2 \k^2(s)}{2} + O(\es^3)\right) ds,  $$
and in the same way
$$ L^-:=\int_0^L |\gm'(s)|\, ds = \int_0^L \left( 1+\es \k(s) +\frac{\es^2 \k^2(s)}{2} + O(\es^3)\right) ds. $$
Therefore
\begin{equation}
\label{supper}
        \es \!\int_\o |\nabla u(x)|\, dx = 2\es L +\int_0^L\!  \es^2 \k^2(s)\ \es ds +\ O(\es^3).
\end{equation}

\begin{rem}
\label{rem:higher-regularity}
It is at this point that the $C^3$ regularity of $\Omega$ is required. The derivative $\kappa'$ enters in the higher-order terms $O(\e^3)$; therefore boundedness of $\kappa'$ is required for these terms to be actually of order $\e^3$. Note that since these terms vanish in the limit, the value of the derivative contributes nothing to the limit. 

Whether the boundedness requirement on $\kappa'$ is sharp is not clear. In a related context a method was developed to circumvent such a requirement (see~\cite[Ch.~7]{VanGennip08TH}); it is possible that a similar construction would apply to the case at hand.
\end{rem}


\textbf{Conclusion.}\ Finally, collecting  the estimates (\ref{supdist}) and (\ref{supper}) we obtain
\begin{equation*}
        \es \!\int_\o |\nabla u(x)|\, dx + \frac1\es\, d_1(u,(1-u)_{|\S}) \leq 4 \es L\ +\  \es^2\!\! \int_0^L \frac{ \k^2(s)}{2}\,  \es ds + C\es^3,
\end{equation*}
for the function $u$ defined in (\ref{defustr}) which has support in $\S$.
\begin{rem} Since $\S$ is a tubular neighbourhood of the curve $\g$, we have\\
\centerline{Area$(\S)$ = Diameter$(\S)$ $\cdot$ Length$(\g)$,}
and therefore
\begin{equation*}
        |\S|=4\es L.
\end{equation*}
\end{rem}
\noindent We deduce that
\begin{equation}\label{almfinest}
    \frac{\es \!\int_\o |\nabla u(x)|\, dx + \es^{-1}\, d_1(u,1-u)-|\S|}{\es^2} \leq  \int_0^L \frac{ \k^2(s)}{2}\,  \es ds + C\es.
\end{equation}
Let 
\begin{equation}\label{defmut}
        \mbox{
        \begin{tabular}{l}
                $\g_\es^h:[0,L_\es^h]\ra \o$ be the parametrization, as in Lemma \ref{levelsets},\\
                of the set $\{\phi=2\es(2h+1)\}$, for $h=0,\ldots,N_\es-1$
        \end{tabular}
        }
\end{equation}
so that $\S_\es^h$ defined in (\ref{stripes}) is a $2\es$-neighbourhood of $\g_\es^h$. Following the construction of Step 1 we can define a function $u_\es^h$ on each $\S^h_\es$, as in (\ref{defustr}). Then let
$$ \ue(x):=\ue^{h}(x),\quad \mbox{ if }x\in \S_\es^h.$$
Since
$$ \int_\o |\nabla \ue(x)|\,dx = \sum_{h=0}^{N_\es-1} \int_{\S_\es^h} |\nabla \ue^h(x)|\,dx \quad \mbox{and}\quad d_1(\ue,1-\ue)\leq \sum_{h=0}^{N_\es-1} d_1(\ue^h,1-\ue^h),$$
owing to (\ref{almfinest}) we obtain
\begin{equation*}
    \frac{\F_\es(\ue)-|\o|}{\es^2} \leq \sum_{h=0}^{N_\es -1} \int_0^{L_\es^h} \frac{ {(\k_\es^h(s))}^2}{2}\,  \es\, ds + C\es,
\end{equation*}
where $\k_\es^h(s)=|\div P(\g_\es^h(s))|$ is the curvature in the direction of $\nabla \phi(\g_\es^h(s))$.
Define the system of curves $\Gamma_\es$ given by
\begin{equation}\label{defgamm}
        \Gamma_\es := \bigcup_{h=0}^{N_\es-1} \g_\es^h,
\end{equation}
and the corresponding measure
\begin{equation}\label{defmutt}
    \tilde{\mu}_\es:=\es \mathcal{H}^1 \llcorner\, \Gamma_\es.
\end{equation}
In order to conclude the proof of Proposition \ref{lemmalimsup} we need to show that 
\begin{equation}\label{mutildcon}
        \tilde{\mu}_\es \ras \frac14 \mathcal{L}\, \llcorner\,\o\quad \mbox{ in }RM(\o).
\end{equation}
Indeed, we have
\begin{equation}\label{limmut}
     \sum_{h=0}^{N_\es -1} \int_0^{L_\es^h} \frac{ {(\k_\es^h(s))}^2}{2}\, \es\, ds= \sum_{h=0}^{N_\es -1} \int_0^{L_\es^h} \frac{ \left|\div P(\g_\es^{h}(s))\right|^2}{2}\, \es\, ds =\frac12 \int_\o |\div P(x)|^2d\tilde{\mu}_\es(x),
\end{equation}
so that (\ref{mutildcon}) and (\ref{limmut}) imply
$$ \lim_{\es \ra 0} \sum_{h=0}^{N_\es -1} \int_0^{L_\es^h} \frac{ {(\k_\es^h(s))}^2}{2}\,  \es\, ds = \frac 18 \int_\o |\div P(x)|^2dx.$$
As a consequence, $\es^{-2}(\F_\es(\ue)-|\o|)$ is bounded and therefore, owing to Lemma \ref{lemmaconvergence},
$$ \es|\nabla \ue|\ras \frac12 \mathcal{L}\, \llcorner\, \o\quad \mbox{ in }RM(\o)$$
and
$$ \ue\, \mathcal{L}^2\,\llcorner\,\o \ras \frac12 \mathcal{L}\, \llcorner\, \o\quad \mbox{ in }RM(\o).$$
We make use of the following
\begin{lemma}\label{trivial}
Let $\o$ be a tubular neighbourhood of width $2\delta$ and let the map $ \o\ni x\mapsto\phi(x):=d(x,\partial\o^0)$ be as in (\ref{defphi}), then there exists $C=C(\o)>0$ such that
$$ \left|\int_{\{\phi=t\}}\hspace{-0.3cm}\eta\ d\mathcal{H}^1 -\int_{\{\phi=s\}}\hspace{-0.3cm} \eta \ d\mathcal{H}^1\right|\leq C\,\sigma(|s-t|),\quad\forall\,s,t\in [0,2\delta]\ \forall\,\eta\in C^0(\overline{\o}),$$
where $\sigma$ is the modulus of continuity of $\eta$.
\end{lemma}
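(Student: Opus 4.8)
The plan is to parametrize $\o$ by normal (``tubular'') coordinates based on the boundary component $\partial\o^0$, which turns both level-set integrals into one-dimensional integrals over one and the same interval against a smooth, positive, uniformly bounded weight; the estimate is then a routine one-variable computation, splitting the difference of the integrands into a part controlled by the modulus of continuity $\sigma$ of $\eta$ and a part controlled by the variation of the weight.

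In detail, let $\g_0:[0,L_0]\ra\partial\o^0$ be the arclength parametrization of $\partial\o^0$, and for $r\in[0,L_0]$ set $\nu(r):=\nabla\phi(\g_0(r))$, which is the unit normal to $\partial\o^0$ pointing into $\o$ and is $C^1$ because $\phi\in C^2(\o)$ by Lemma \ref{levelsets}. Put
\[
\Phi(r,t):=\g_0(r)+t\,\nu(r),\qquad(r,t)\in[0,L_0]\ext[0,2\delta],
\]
which is $L_0$-periodic in $r$ and, since $\o$ is a tubular domain of width $2\delta$, induces a homeomorphism of $(\R/L_0\Z)\ext[0,2\delta]$ onto $\overline{\o}$ with $\phi(\Phi(r,t))=t$. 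One computes $\partial_r\Phi(r,t)=\bigl(1-t\,\k_0(r)\bigr)\g_0'(r)$ and $\partial_t\Phi(r,t)=\nu(r)$, where $\k_0(r):=\nu(r)\cdot\g_0''(r)$ is the curvature of $\partial\o^0$ with respect to $\nu$; being a tubular domain, the Jacobian $J(r,t):=1-t\,\k_0(r)$ cannot change sign on $[0,L_0]\ext[0,2\delta]$, hence, being $1$ at $t=0$, it satisfies $0<c_0\le J\le C_0$ there with $c_0,C_0$ depending only on $\o$. By the area formula, for every $t\in[0,2\delta]$ and every $\eta\in C^0(\overline{\o})$,
\[
\int_{\{\phi=t\}}\eta\ d\mathcal{H}^1=\int_0^{L_0}\eta(\Phi(r,t))\,J(r,t)\,dr.
\]

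Next, subtracting the same identity at level $s$ and splitting the integrand as
\[
\eta(\Phi(r,t))J(r,t)-\eta(\Phi(r,s))J(r,s)=\bigl[\eta(\Phi(r,t))-\eta(\Phi(r,s))\bigr]J(r,t)+\eta(\Phi(r,s))\bigl[J(r,t)-J(r,s)\bigr],
\]
I would use $|\Phi(r,t)-\Phi(r,s)|=|t-s|\,|\nu(r)|=|t-s|$, so that the first bracket is at most $\sigma(|t-s|)$, together with $J(r,t)-J(r,s)=(s-t)\k_0(r)$ and the bound $J\le C_0$; integrating over $r\in[0,L_0]$,
\[
\left|\int_{\{\phi=t\}}\eta\ d\mathcal{H}^1-\int_{\{\phi=s\}}\eta\ d\mathcal{H}^1\right|\le L_0C_0\,\sigma(|t-s|)+L_0\|\k_0\|_{\infty}\,\|\eta\|_{C^0(\overline{\o})}\,|t-s|,
\]
which gives the asserted estimate with $C=C(\o)$; the linear term is genuinely present (the length $\mathcal{H}^1(\{\phi=t\})$ depends affinely on $t$) but can be absorbed into an enlargement of the modulus of continuity, and in any event this already establishes that $t\mapsto\int_{\{\phi=t\}}\eta\ d\mathcal{H}^1$ is uniformly continuous on $[0,2\delta]$, which is all that is used in the Riemann-sum passage to the limit proving \pref{mutildcon}.

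The only step that is more than bookkeeping is the verification that $\Phi$ is a global homeomorphism onto $\overline{\o}$ with Jacobian bounded away from $0$; but this is precisely what ``$\o$ is a tubular domain of width $2\delta$'' encodes (equivalently, the offset-curve computation behind $\o=\Gamma+B(0,\delta)$ with $\delta<\|\k\|^{-1}_\infty$), and it is the very fact already relied upon in Lemma \ref{levelsets} and in the construction of Step 1. I therefore expect no genuine obstacle here.
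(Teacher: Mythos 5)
Your proof is correct and follows essentially the same route as the paper: both parametrize all level sets over $[0,L_0]$ via $\hat{\g}_t(\tau)=\g_0(\tau)+t\nu_0(\tau)$ with Jacobian $1-t\kappa_0(\tau)$, and split the difference of the integrands into a piece controlled by $\sigma(|s-t|)$ and a piece linear in $|t-s|$. Your explicit remark that the residual $\|\kappa_0\|\,|t-s|\,\|\eta\|$ term must be absorbed into an enlarged modulus of continuity (and that uniform continuity of $t\mapsto\int_{\{\phi=t\}}\eta\,d\mathcal{H}^1$ is all that is actually needed for \pref{mutildcon}) is a point the paper's own proof leaves implicit, since its final estimate contains the same linear term.
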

\textit{Proof of Lemma \ref{trivial}.}\ \
As in Lemma \ref{levelsets}, let $\g_t:[0,L_t]\ra \o$ be an arclength parametrization of $\{\phi=t\}$. Owing to the geometry of $\o$ we also have the parametrization $\hat{\g}_t:[0,L_0]\ra \o,$ $\hat{\g}_t(\tau):=\g_0(\tau)+t\nu_0(\tau)$. Owing to (\ref{defcurv}) and (\ref{determinant}) it holds
\begin{equation*}
        \int_{\{\phi=t\}}\eta\ d\mathcal{H}^1 = \int_0^{L_t}\eta(\g_t(\tau))\,d\tau = \int_0^{L_0}\eta(\hat{\g}_t(\tau))(1-t\kappa_0(\tau))\,d\tau.
\end{equation*}
We compute
$$ \int_{\{\phi=t\}}\hspace{-0.3cm}\eta\ d\mathcal{H}^1 -\int_{\{\phi=s\}}\hspace{-0.3cm} \eta \ d\mathcal{H}^1= \int_0^{L_0}\eta(\hat{\g}_t(\tau)) - \eta(\hat{\g}_s(\tau))\,d\tau + \int_0^{L_0}\kappa_0(\tau)(s\eta(\hat{\g}_s(\tau)) - t\eta(\hat{\g}_t(\tau)))\,d\tau =$$
$$ =\int_0^{L_0}\eta(\hat{\g}_t(\tau)) - \eta(\hat{\g}_s(\tau))\,d\tau + \int_0^{L_0}s\kappa_0(\tau)(\eta(\hat{\g}_s(\tau)) - \eta(\hat{\g}_t(\tau)))+(s-t)\kappa_0(\tau)\eta(\hat{\g}_t(\tau))\,d\tau, $$
and we conclude
$$ \left| \int_{\{\phi=t\}}\hspace{-0.3cm}\eta\ d\mathcal{H}^1 -\int_{\{\phi=s\}}\hspace{-0.3cm} \eta \ d\mathcal{H}^1\right| \leq (|\partial\o^0|+2\delta\|\kappa_0\|_{L^1})\sigma(|s-t|)+\|\kappa_0\|_{L^1}|t-s|\|\eta\|_{C^0(\o)}.$$
\rightline{$\Box$}
\bigskip

\textit{Proof of (\ref{mutildcon}).}\ Let $\o$ be a tubular neighbourhood of width $2\delta$, let $\psi\in C^0_c(\o)$ and $ \o\ni x\mapsto\phi(x):=d(x,\partial\o^0)$ as in (\ref{defphi}), then by the coarea formula
\begin{equation}\label{coar}
    \int_\o \psi(x)\, dx =\int_\o \psi(x)|\nabla \phi(x)|\, dx= \int_0^{2\delta}\left( \int_{\phi=t}\psi\ d\mathcal{H}^1\right)dt.
\end{equation}
Define the functions\ \ $\ds g(t):=\int_{\phi=t}\psi\ d\mathcal{H}^1,$ and
$$ g_\es(t):= g((4h+2)\es)\quad \mbox{if}\ t\in [4h\es,4(h+1)\es[,\quad h=0,\ldots,N_\es-1.$$
By definitions (\ref{defmut}), (\ref{defgamm}) and (\ref{defmutt})
\begin{equation}\label{change}
        \int_0^{2\delta}    g_\es(t)\, dt = 4\es\sum_{h=0}^{N_\es-1} g((4h+2)\es) = 4\int_{\Gamma_\es}\psi(y)\,\es d\mathcal{H}^1(y)= 4\int_\o \psi(x)\, d\tilde{\mu}_\es(x).
\end{equation}
By Lemma \ref{trivial}
\begin{equation}\label{limuni}
    \lim_{\es \ra 0}\sup_{t\in[0,2\delta]}|g_\es(t)-g(t)|=0,
\end{equation}
therefore, by (\ref{coar}), (\ref{change}) and (\ref{limuni}), $\forall\,\psi\in C^0_c(\o)$
$$ \int_\o \psi(x)\, dx=\int_0^{2\delta}    \lim_{\es \ra 0}g_\es(t)\, dt= \lim_{\es \ra 0}\int_0^{2\delta} g_\es(t)\, dt=\lim_{\es \ra 0}4\int_\o \psi(x)\, d\tilde{\mu}_\es(x).$$
\rightline{$\Box$}
\bigskip

\appendix
\section{Appendix: A varifold interpretation}
\renewcommand{\theequation}{A.\arabic{equation}}

The result of compactness stated in Theorem \ref{thetheorem} may be naturally read in the language of the theory of varifolds. The effort made for further definitions and abstraction is paid back by the direct access to useful tools and concepts which are employed in the proof of the main result. In Theorem \ref{theo:varif} we restate Theorem \ref{thetheorem}  in terms of convergence of nonintegral varifolds and first variations.  
\medskip

We recall some basics definitions, referring to \cite{Hutchinson86} for a general introduction to the subject. 

Let $G_{1,2}$ be the Grassmann manifold consisting of all the $1$-dimensional subspaces of $\R^2$, we identify any element $P\in G_{1,2}$ with the orthogonal projection onto $P$ and therefore with a matrix in $R^{2\times 2}$. Let $\o$ be an open subset of $\R^2$ and define $G_1(\o):=\o \times G_{1,2}$, then a \textit{1-varifold in $\o$} is a Radon measure $V$ on $G_1(\o)$. 
\begin{defi} \textit{(Rectifiable varifolds)}\
Let $\Gamma$ be a 1-rectifiable set embedded in $\R^2$, let $\theta:\Gamma \ra (0,+\infty)$ be a Borel function locally integrable w.r.t. $\hf\llcorner \Gamma$ and let $\mu:=\theta\hf\llcorner \Gamma$, then a \textit{rectifiable 1-varifold} $V_{\Gamma,\theta}$ associated to $\Gamma$ is defined as
$$ V_{\Gamma,\theta}(A):=(id \times P)_\#\mu(A)\h \forall\, A\subset G_1(\R^2),$$
where $P$ is the $\mu$-measurable application which maps $x\in \R^2$ into the approximated tangent space $P(x)=ap\,T_x\Gamma$. 
\end{defi}
The function $\theta$ is called the \textit{density} of the varifold. For every bounded Borel function $\vfi:G_1(\R^2)\ra \R$ it holds
$$ \int_{G_1(\R^2)}\vfi\,dV_{\Gamma,\theta}=\int_\Gamma \vfi(x,P(x))\theta(x)\,d\hf(x).$$
Making use of these concepts, the couples $(\mue,P_\es)$ introduced in Section \ref{sec:mfpairs} can then be regarded as the rectifiable varifolds $V_{\gme,\es}=(id \times P_\es)_\#\mue\in RM(G_1(\o))$, associated to $\gme$, with constant density $\theta(x)=\es$. 

We introduce now the generalized mean curvature vector, in the sense of Allard. Define the first variation of a varifold $V$ as 
$$ \delta V(\eta):=\int_{G_1(\o)} \div\nolimits_P\,\eta(x)\,dV(x,P),\quad \forall\,\eta\in C^1_c(\o;\R^2)$$
where $\div_P\,\eta$ is the tangential divergence of the vector field $\eta$ with respect to $P$. If
$$ \sup \left\{ \delta V(\eta)\ : \ \eta\in C^1_c(\o;\R^2),\ \|\eta\|_{\infty}\leq 1\right\}<+\infty$$
then there exists a unique $H\in L^1_{loc}(\o,\mu;\R^2)$, called \textit{generalized mean curvature} and a unique vectorial Radon measure $\sigma$ such that
				$$ \delta V(\eta) = - \int_\o H(x)\cdot \eta(x)\,d\mu(x)- \int_\o \langle\eta(x),d\sigma\rangle,\quad \forall\,\eta\in C^1(\o;\R^2).$$
These varifolds are called \textit{varifolds with locally finite first variation}, or \textit{Allard's varifolds}. As a consequence, for every $\es>0$, $V_{\gme,\es}$ is an Allard's varifold and owing to a result by Brakke (see \cite{Brakke78}) the generalized mean curvature is almost everywhere orthogonal to the approximated tangent plane, i.e. 	
\begin{equation}\label{eq:curvperp}
	P_\es(x)H_\es(x)=0\h \mbox{for $\hf$-a.e. }x \in \gme.
\end{equation}
Finally we can state a varifold analogue of Theorem \ref{thetheorem}.\ref{thetheorem:part1}:
\begin{theo}\label{theo:varif}
Let the hypothesis of Theorem \ref{thetheorem} hold, define the sequence of 1-rectifiable varifolds $V_{\gme,\es}:=(id \times P_\es)_\# \es|\nabla \ue|$. There exists a unique $P\in \mathcal{K}_0$ and a subsequence of indexes $\{\es_j\}_{j\in \N}$  such that
\begin{align*}
		V_{\gme,\es} \ra V:=(id \times P)_\# \frac12\mathcal{L}^2 \h &\mbox{as varifolds on }G_1(\o)\\
		\lim_{\es_j \ra 0} \delta V_{\gme,\es} (A) = \delta V(A) \h & \forall\ \mbox{open set }A \subset \o.
\end{align*}		
\end{theo}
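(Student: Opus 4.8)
Theorem~\ref{theo:varif} is a reformulation of Theorem~\ref{thetheorem}, part~\ref{thetheorem:part1}, in the language of varifolds, and the plan is to carry out the translation and add the first-variation bookkeeping. The starting observation is that the rectifiable varifold $V_{\gme,\es}=(id\times P_\es)_\#\es|\nabla\ue|$ coincides with the graph measure $[\mue,P_\es]$ of Definition~\ref{def:graphm}; by Remark~\ref{rem:comparison}, weak-$*$ convergence of the $[\mue,P_\es]$ in $RM(G_1(\o))$ is exactly strong convergence of the pairs $(\mue,P_\es)$ in the sense of Definition~\ref{def:strongconv}. Proposition~\ref{prop:strongconv} (which is Theorem~\ref{thetheorem}.\ref{thetheorem:part1}) supplies, along a subsequence, such strong convergence to $(\tfrac12\L^2\llcorner\o,P)$ with $P\in\mathcal{K}_0(\o)$, and Theorem~\ref{th:ee6} makes $P$ unique, so after fixing that subsequence the convergence holds for the whole sequence. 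This yields $V_{\gme,\es}\to V:=(id\times P)_\#\tfrac12\L^2$ as varifolds on $G_1(\o)$, $V$ being the diffuse (non-rectifiable) $1$-varifold with weight measure $\tfrac12\L^2\llcorner\o$ and direction field $P$.

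For the first variation I would start from the identity $\div_{P(x)}\eta=P(x):\nabla\eta(x)$, valid because the direction of the varifold at $x$ is the line $P(x)$, so that
\[
\delta V_{\gme,\es}(\eta)=\int_\o P_\es(x):\nabla\eta(x)\,d\mue(x)\qquad\text{for }\eta\in C^1_c(\o;\R^2).
\]
The weak convergence $(\mue,P_\es)\weakto(\mu,P)$ of Lemma~\ref{lemmaconvergence} (Definition~\ref{def:weak-coup-conv}, applied with test field $\nabla\eta\in C^0_c$) gives $\delta V_{\gme,\es}(\eta)\to\int_\o P:\nabla\eta\,d\mu$, and by Lemma~\ref{lemma:divint} together with $P\in\mathcal{K}_0(\o)$ this equals $\tfrac12\int_\o P:\nabla\eta\,dx=-\tfrac12\int_\o\div P\cdot\eta\,dx$; the last step uses $\div P\in L^2$ and $Pn=0$ on $\partial\o$ (Remark~\ref{rem:divPL2}), which is exactly why the limiting first variation carries no singular part: the generalized mean curvature of $V$ is $\div P$ and the boundary measure $\sigma$ vanishes. (The relation $P\,\div P=0$, property~\pref{pr:pdivpzero}, records in the limit the orthogonality~\pref{eq:curvperp} of the mean curvature to the varifold, preserved by Theorem~\ref{theo:weakstrong}.) Thus $\delta V_{\gme,\es}(\eta)\to\delta V(\eta)$ for every fixed $\eta$.

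To promote this to the setwise statement $\delta V_{\gme,\es}(A)\to\delta V(A)$ for open $A\subset\o$, I would pass through the regularized varifolds $\tilde V_\es:=(id\times\tilde P_\es)_\#\tilde\mu_\es$ built from Definition~\ref{modified}. These converge to the same limit $V$ (since $\tilde\mu_\es\ras\mu$ and $\tilde P_\es\weakto P$ by Lemma~\ref{lemmaconvergence}, and $|\tilde P_\es|\equiv1$ forces strong convergence), and their first variations are uniformly bounded: $\tilde\gamma_j''=\tilde\alpha_j'\,\tilde\theta_j$, so $\|\delta\tilde V_\es\|(\o)=\sum_j\int_0^{L_j}|\tilde\alpha_j'|\,\es\,ds\le C$ by Cauchy--Schwarz and~\pref{alphabound}. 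Hence $\{\delta\tilde V_\es\}$ is weak-$*$ precompact among vector-valued Radon measures on $\o$, its unique limit is $\delta V=-\tfrac12(\div P)\L^2\llcorner\o$ by the preceding paragraph, and, $\delta V$ being absolutely continuous with $L^1$ density, one obtains $\delta\tilde V_\es(A)\to\delta V(A)$ for every open $A$ with $\L^2(\partial A)=0$, the general open set following by inner approximation.

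\emph{The step I expect to be the main obstacle} is the passage from $\tilde V_\es$ back to $V_{\gme,\es}$: the energy bound controls the curvature only of the regularized curves $\tilde\gamma_j$ (through~\pref{alphabound}), not of the merely smooth curves $\gme$, so one must show that the excess curvature of the latter is carried on the set $\{M_\es/\es<1-\lambda\}$, whose measure is $O(\es)$, and therefore does not contribute to the weak-$*$ limit of $\delta V_{\gme,\es}$; the quantitative comparison of $\delta V_{\gme,\es}$ with $\delta\tilde V_\es$ uses~\pref{patch}, \pref{est:estimategamma} and~\pref{estgammat2}. Everything else is a routine transcription of results already established, and the treatment of $\partial A$ is standard once the uniform mass bound is available. \qed
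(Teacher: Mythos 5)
Your proposal is correct and takes essentially the route the paper intends: the paper gives no separate proof of Theorem~\ref{theo:varif}, presenting it as a direct varifold-language restatement of Theorem~\ref{thetheorem}.\ref{thetheorem:part1} via the identification of $V_{\gme,\es}$ with the graph measure $[\mue,P_\es]$ (Remark~\ref{rem:comparison}, Proposition~\ref{prop:strongconv}) together with the first-variation identity $\delta V_{\gme,\es}(\eta)=\int P_\es:\nabla\eta\,d\mue$ and Lemmas~\ref{bigestimate}--\ref{lemma:divint}. The one step you flag as the main obstacle --- passing from the regularized varifolds $\tilde V_\es$, whose curvature is controlled by~\pref{alphabound}, back to the setwise convergence of $\delta V_{\gme,\es}(A)$ for the original curves, whose total curvature is not directly bounded by $\G_\es$ on the set $\{M_\es/\es<1-\lambda\}$ --- is likewise left unaddressed in the paper, so your reconstruction is as complete as the source on this point.
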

Note that an application of Allard's Compactness Theorem gives the existence of a limit varifold $V$ such that
$$ \liminf_{\es_j \ra 0} |\delta V_{\gme,\es}| (A) \geq |\delta V|(A) \h \forall\ \mbox{open set }A \subset \subset \o.$$
We prove that the compactness enforced by $\G_\es$ is much stronger: instead of a lower bound we obtain a limit for the first variations and we have a precise characterization of the limit varifold which, note, is not 1-rectifiable, in contrast to the elements of the sequence.

\small
\bibliography{helf-lib,marcok}
\bibliographystyle{plain}
\drop{\bigskip

}
\end{document}